\documentclass[12pt,a4paper]{amsart}
\usepackage[english]{babel}
\usepackage[left=2cm,right=2cm,top=2cm,bottom=2cm]{geometry}
\usepackage[numbers,sort&compress]{natbib} 
\usepackage{enumitem}
\usepackage{hyperref}
\usepackage{graphicx}
\usepackage{mathtools}
\usepackage{cases}
\usepackage{bbm}
\usepackage{mathrsfs}  
\usepackage{float}
\usepackage{tikz}

\usepackage{xcolor}

\theoremstyle{definition}
\newtheorem{defi}{Definition}[section]

\newtheorem*{notation}{Notation}
\theoremstyle{plain}
\newtheorem{teo}[defi]{Theorem}
\newtheorem{lem}[defi]{Lemma}

\newtheorem{cor}[defi]{Corollary}

\newcommand{\N}{\mathbb{N}}
\newcommand{\R}{\mathbb{R}}

\newcommand{\se}{\subseteq}
\newcommand{\ceq}{\coloneqq}
\newcommand{\res} {\mathop{\hbox{\vrule height 7pt width .5pt depth 0pt \vrule height .5pt width 6pt depth 0pt}}\nolimits}
\newcommand{\leb}{\mathcal{L}}

\newcommand{\G}{\mathbb{G}}
\newcommand{\uno}{\mathbbm{1}}
\newcommand{\Z}{\mathbb{Z}}
\newcommand{\nl}{\left\|}
\newcommand{\nr}{\right\|}
\newcommand{\ve}{\varepsilon}

\title{Quantization of measures in Carnot groups}

\author{Marco Di Marco}
\author{Mikaela Iacobelli}

\address[M. Di Marco, M. Iacobelli]{ETH Z\"urich, Department of Mathematics, R\"amistrasse 101, 8092 Z\"urich, Switzerland.}

\email[M. Di Marco]{mdimarco@ethz.ch}
\email[M. Iacobelli]{mikaela.iacobelli@math.ethz.ch}

\subjclass[2020]{53C17, 41A60, 28A33, 49Q22, 41A65}
\keywords{Carnot groups, sub-Riemannian Geometry, optimal quantization, Zador's Theorem, Wasserstein distance}

\begin{document}

\begin{abstract}

We study optimal quantization of probability measures on Carnot groups equipped with a left-invariant homogeneous distance. We prove two main results. First, we establish a Zador-type asymptotic formula for the quantization error: after the natural rescaling, the error converges as the number of centers tends to infinity, and the exponent is determined by the homogeneous dimension of the group. The limiting constant is a Carnot cell constant, defined through the quantization problem on a reference cell. Second, we prove weak convergence of the empirical measures associated with optimal centers, and describe the limit in terms of the density of the absolutely continuous part of the measure. The proof combines a tiling of Carnot groups by exponential cubes with a sub-Riemannian version of Pierce’s lemma, which allows us to treat measures with non-compact support.\end{abstract}

\thanks{The authors are supported by SNSF Starting Grant TMSGI2\textunderscore226018. M. D. M. is a member of GNAMPA of INdAM. The authors warmly thank Camillo Brena for many precious discussions. Part of this work was written while M. D. M. was a guest at the Forschungsinstitut f\"ur Mathematik (FIM) in Z\"urich: he wishes to thank the FIM for the support, as well as for the pleasant and exceptionally stimulating atmosphere.}

\maketitle

\section{Introduction}

The term \emph{quantization of measures} classically refers to the approximation of a probability measure $\mu$ on $\R^n$, with respect to the $r$-th Wasserstein distance $W_r$, by probability measures supported on at most $N$ points. In other words, one would like to study the \emph{quantization error} functional defined by
\[
V_{N,r}(\mu)\ceq\inf_{\#\operatorname{spt}(\mu_N)\leq N} W_r(\mu,\mu_N)^r,
\]
where, as usual, the Wasserstein distance is defined as follows:
\[
W_r(\mu,\mu_N)^r \ceq \inf \left\{ \int_{\R^n \times \R^n} |x-y|^r\, d\gamma(x,y): (\pi_1)_\#\gamma=\mu, (\pi_2)_\#\gamma=\mu_N \right\}.
\]
This problem admits several equivalent formulations (see \cite{gl00}) and, in particular, it reduces to a problem over the support of $\mu_N$. In fact, $V_{N,r}$ admits also the equivalent definition
\[
V_{N,r}(\mu) \ceq \inf_{\alpha \se \R^n,\#\alpha \leq N} \int_{\R^n} \min_{a \in \alpha}|y-a|^r\, d\mu(y).
\]

Quantization of measures also has applications in signal processing, materials science, economics, and numerical analysis. We briefly recall the history and applications of the problem before turning to the Carnot-group setting. The quantization problem originated in signal processing, where the aim is to efficiently compress analog signals, such as sound or images, into digital signals taking values in a finite set. The study of scalar quantization, i.e. quantization on $\R$, started in the 1940s with the pioneering works \cite{b48,ops48}. Later, starting from the 1970s, the quantization problem on $\R^n$ was studied by different authors \cite{g79,z82,gl00,bw82}, under the name of vector quantization. For a complete overview of quantization from the viewpoint of signal processing and information theory, we refer the reader to \cite{gg92,gn02}, and for an introduction to quantization and its applications in numerics, to \cite{g15}.

We also mention other applications in materials science \cite{bpt14} and mathematical models in economics \cite{bs09,s15}.

The quantization problem has also been studied under different names, such as the optimal location problem \cite{bjr02,bjm11,br15,bc21}, centroidal Voronoi tessellations \cite{qvg99,qw05}, or the problem of k-means clustering in statistics \cite{m67}. An associated problem is empirical quantization, or uniform quantization, studied both in the deterministic setting \cite{c18,k20,mss21} and in the stochastic setting, also referred to as random matching, \cite{hs82,z85,gl02,dss13,ts15,ag19,agt19,ast19,bc20,bccdss21}.

Let us also mention that some recent different approaches in the study of quantization include the use of calculus of variations/$\Gamma$-convergence \cite{bjm11,bc21} and gradient flow \cite{cgi15,cgi18,i18,i19,ips19}.

In recent years, the quantization problem has also been studied on Riemannian manifolds, see for instance \cite{k12,ai25,i16,g04,g01,lbp19,lbp19bis}.

For further details on the basics of the theory of quantization of measures, we refer the reader to the monograph \cite{gl00} and the references therein. 

A central result in the theory of quantization is Zador's Theorem (Theorem \ref{teo_classic} below), which provides a precise expression for the asymptotics of the quantization error $V_{N,r}$. Zador's theorem was first proved in a weaker version in \cite{z82} and later improved by several authors, see \cite{bw82,gl00}.

\begin{teo}\label{teo_classic}
    Let $r\geq 1$, $\delta>0$, and $\mu$ be a probability measure on $\R^n$ with finite $(r+\delta)$-moment, that is,
    \[
    \int_{\R^n}|x|^{r+\delta}\, d\mu(x)<+\infty.
    \]
    Define
    \[
    Q_r([0,1]^n) \ceq \inf_{N \geq 1}N^{\frac{r}{n}}V_{N,r}(\uno_{[0,1]^n}\leb^n),
    \]
    where $\uno_{[0,1]^n}$ denotes the indicator function of the $n$-dimensional cube $[0,1]^n \se \R^n$. Then $Q_r([0,1]^n)>0$. Let $\mu=\mu_{\mathrm{ac}}+\mu_{\mathrm{s}}$ be the Lebesgue decomposition\footnote{With respect to the $n$-dimensional Lebesgue measure $\leb^n$.} of $\mu$, and set $h\ceq d\mu_{\mathrm{ac}}/d\leb^n$. Then
\[
\lim_{N \to +\infty}N^{\frac{r}{n}}V_{N,r}(\mu)=Q_r([0,1]^n)\nl h \nr_{L^{\frac{n}{n+r}}}.
\]
\end{teo}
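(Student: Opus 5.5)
The plan follows the classical Graf--Luschgy scheme, built up through increasingly general measures.

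\emph{Step 1: the uniform cube.} For $\mu=\uno_{[0,1]^n}\leb^n$ we show that the infimum defining $Q_r([0,1]^n)$ is actually a limit. Given $N$, we split $[0,1]^n$ into $k^n$ subcubes of side $1/k$, put an optimal $\lfloor N/k^n\rfloor$-configuration in each, and use scaling: dilation by $1/k$ multiplies $V$ by $k^{-r}$, and each subcube carries mass $k^{-n}$. This gives the subadditivity-type bound
\[
V_{N,r}(\uno_{[0,1]^n}\leb^n)\le k^{-r}V_{\lfloor N/k^n\rfloor,r}(\uno_{[0,1]^n}\leb^n).
\]
A Fekete-type argument then yields $\lim_N N^{r/n}V_{N,r}=\inf_N N^{r/n}V_{N,r}=Q_r$. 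Positivity follows from a volume bound. With $N$ centers, the set of points within distance $\varepsilon$ of the centers has measure at most $N\omega_n\varepsilon^n$. Choosing $\varepsilon\sim N^{-1/n}$ so that this measure is at most $1/2$, the remaining half of the cube contributes at least $\tfrac12\varepsilon^r$, hence $V_{N,r}\gtrsim N^{-r/n}$.

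\emph{Step 2: densities.} First take $h=\sum_i c_i\uno_{C_i}$, a finite combination over disjoint cubes $C_i$. For the upper bound, allocate $N_i\approx N\,t_i$ centers to $C_i$, where $t_i$ is chosen to minimize $\sum_i c_i|C_i|^{1+r/n}Q_r t_i^{-r/n}$. Hölder's inequality shows this minimum equals $Q_r\|h\|_{L^{n/(n+r)}}$. For the lower bound, an optimal configuration induces numbers $N_i$ of centers that are effectively used for $C_i$. Boundary effects are handled by shrinking the cubes slightly, or by the "firewall" trick of adding centers on the boundary grid at negligible cost. Each cube then gets $\gtrsim c_i|C_i|^{1+r/n}Q_r N_i^{-r/n}(1+o(1))$, and the reverse Hölder (convexity) inequality gives the matching $\liminf$. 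General compactly supported absolutely continuous $\mu$ is handled by approximating $h$ in $L^1$ by step functions. The key input is that $V_{N,r}$ is controlled under small perturbations: for $\mu\le\nu_1+\nu_2$, choosing centers for each piece gives $V_{N_1+N_2}(\mu)\le V_{N_1}(\nu_1)+V_{N_2}(\nu_2)$, with $N_2=\varepsilon N$ spent on the error term.

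\emph{Step 3: singular part and non-compact support.} For singular $\mu_s$ supported in a cube, $N^{r/n}V_{N,r}(\mu_s)\to0$. This holds because $\mu_s$ is concentrated on an open set $U$ of small Lebesgue measure, and the density argument above gives $\limsup\lesssim \|\cdot\|_{L^{n/(n+r)}}$-small bounds. Combined with subadditivity in $\mu$ and the superadditivity $V(\mu_{ac}+\mu_s)\ge V(\mu_{ac})$ for the lower bound, this settles the compact-support case. For general $\mu$, the main obstacle is the tail: we need $N^{r/n}V_{N,r}(\mu\res B_R^c)$ to be small uniformly in $N$ as $R\to\infty$. Here we use Pierce's lemma, which states that $N^{r/n}V_{N,r}(\nu)\le C\int|x|^{r+\delta}d\nu$ in the appropriate normalized form. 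We prove it by placing centers on dyadic shells with geometrically chosen densities and using the $(r+\delta)$-moment to sum the contributions. Applying it to the tail, with $\varepsilon N$ centers reserved, and letting $R\to\infty$ and then $\varepsilon\to0$ completes the proof, since $\|h\uno_{B_R}\|_{L^{n/(n+r)}}\to\|h\|_{L^{n/(n+r)}}$ by monotone convergence.
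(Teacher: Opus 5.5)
Your proposal is correct, and it is the classical Graf--Luschgy argument. The paper does not reprove this statement separately. It is the abelian case $\G=\R^n$ of Theorem~\ref{teo_intro}, with Euclidean distance, $Q=n$, and $[0,1]^n$ replaced by its translate $[-1/2,1/2)^n$, which does not change $Q_r$. That proof has the same architecture as yours:
\begin{itemize}
\item the cube;
\item finite unions of cubes by allocation, with H\"older for the upper bound and shrinking, a firewall and reverse H\"older for the lower bound;
\item compactly supported densities by $L^1$ approximation, with a fraction of the centers reserved on a grid;
\item the singular part via an open set of small Lebesgue measure;
\item tails via Pierce's lemma on dyadic shells.
\end{itemize}

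The genuine differences are local.
\begin{itemize}
\item \emph{Cube step.} You use that $[0,1]^n$ is exactly a union of $k^n$ rescaled copies of itself. Together with monotonicity of $V_{N,r}$ in $N$, this gives $\lim=\inf$ directly, with no leftover region and no need for Pierce's lemma. The paper cannot do this, because in a Carnot group $C$ is not self-similar under left translations and dilations. It therefore fills $C$ from inside with pavage cells $C_\lambda$ and pays for $C\setminus C_\lambda$ with $\sigma N$ extra centers controlled by Pierce's lemma. Your route is shorter in $\R^n$; the paper's is the one that survives the loss of self-similarity.
\item \emph{Positivity.} Your covering count gives $Q_r\geq\frac12\bigl(2\leb^n(B(0,1))\bigr)^{-r/n}$. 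The paper's argument (moments are minimized by balls, then Jensen over the Voronoi cells) gives the sharper $Q_r\geq\frac{n}{n+r}\leb^n(B(0,1))^{-r/n}$. Either suffices.
\item \emph{Singular part.} The paper avoids allocation altogether. It puts one center in each tile of side $(2\ve/N)^{1/n}$ inside the small open set, and a grid of about $N/2$ points on the rest of the support.
\end{itemize}

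Two steps of your sketch need the right justification.
\begin{itemize}
\item In Step~3 the $L^1$-perturbation argument of Step~2 cannot be used, since a singular measure is not close in total variation to any density. What makes your allocation work is the distribution-free bound $V_{M,r}(\nu)\le c_n\,\ell^rM^{-r/n}$, valid for every probability $\nu$ on a cube of side $\ell$. After that, H\"older gives $\limsup_N N^{r/n}V_{N,r}(\mu_s)\lesssim\leb^n(U)^{r/n}$, plus a grid for the small leftover mass outside finitely many cubes.
\item In the lower bound for step functions, shrinking alone does not control centers lying outside $C_i$ that serve points of the shrunken cube. It has to be paired with a firewall. One option is the exact firewall of the paper: a finite set $\Gamma_i^\rho$ with $d(x,\Gamma_i^\rho)\le d(x,\R^n\setminus\operatorname{int}C_i)$ on the shrunken cube. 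The other is a boundary grid of mesh $o(N^{-1/n})$ using $o(N)$ points.
\end{itemize}
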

Zador's Theorem was later extended to compactly supported probability measures (see \cite{g04,k12}) and to non-compactly supported probability measures (see \cite{i16,ai25}) on Riemannian manifolds. Let us also mention the recent generalization of a weaker version of Zador's Theorem in the broader context of metric measure spaces \cite{a25}.

The classical Euclidean and Riemannian proofs proceed as follows. In the Euclidean case, one first solves the cell problem for the uniform measure on a cube, then treats finite unions of cubes by optimizing the allocation of centers among the cubes, and finally passes to general densities by approximation and Pierce's Lemma \cite[Theorem~6.2 and Lemma~6.6]{gl00}. The density $h$ enters through this allocation principle, producing the factor $h^{n/(n+r)}$; Riemannian versions localize the same strategy in sufficiently small charts, where the metric and the volume are asymptotically Euclidean \cite{g04,k12,i16,ai25}. This localization step is not automatic in the Carnot setting.
\par

Recent metric-measure results provide a useful general framework for the quantization problem, but they do not directly yield the precise asymptotic formula proved here. In particular, \cite{a25} proves a weaker Zador-type result for general metric measure spaces in terms of Hausdorff densities, and a full Zador theorem for appropriate $m$-rectifiable measures in Euclidean space. In the Carnot-group setting, such results can detect the natural scaling associated with the homogeneous dimension $Q$, but they do not identify the exact asymptotic constant, nor do they prove the existence of the limit with the Carnot cell constant appearing in Theorem \ref{teo_intro}. The aim here is to solve this local cell problem in the non-Euclidean tangent geometry of the group; the proof uses the intrinsic pavage by left-translated and homogeneously dilated exponential cubes, together with a Carnot-group version of Pierce's lemma.
\par

Analysis on sub-Riemannian manifolds\footnote{For a complete introduction to sub-Riemannian geometry, we refer the reader to the monographs \cite{abb,cdptbook,montgomery,LDlibro} and the references therein.} has developed in part by testing which classical results extend to this setting. This paper begins the study of quantization of measures in the sub-Riemannian setting.

 Carnot groups (see \cite{ld17} for a primer) are the basic non-Euclidean models in this setting. We postpone precise definitions to Section \ref{sec_prelim}; here we only recall that a Carnot group $(\G,\cdot)$ (see Definition \ref{def_carnot}) is a nilpotent, connected, simply connected Lie group (diffeomorphic to $\R^n$, where $n$ is the topological dimension of $\G$) such that its Lie algebra is stratified. Every Carnot group can be endowed with a left-invariant and homogeneous distance $d$ and with a one-parameter family of group isomorphisms $(\delta_\lambda)_{\lambda>0}$, called dilations. The Hausdorff (homogeneous) dimension of the metric space $(\G,d)$ is greater\footnote{Or equal, in the trivial case when $\G$ is a commutative group.} than the topological dimension of $\G$. Thus it is not a priori clear whether the Euclidean/Riemannian Zador formula survives, or which dimension governs the asymptotic rate. The natural dilations act with different weights on the layers of the Lie algebra, so the metric dimension is the homogeneous dimension \(Q\), not the topological dimension \(n\). The classical argument therefore does not determine whether the asymptotic exponent is \(n\), \(Q\), or also depends on the group law.

A second obstruction is that the known Riemannian proofs cannot be transferred by simply working in local coordinates. Although a Carnot group is a smooth manifold, and can be identified with $\R^n$ through exponential coordinates, the homogeneous distance is not locally Riemannian in these coordinates: small metric balls have volume of order $\rho^Q$, not $\rho^n$, and the natural blow-ups are anisotropic dilations rather than Euclidean rescalings. Thus the usual reduction to Euclidean quantization in almost-isometric Riemannian charts would lead to the wrong local model; the relevant local geometry is the Carnot group itself, with its homogeneous distance and Haar/Lebesgue measure, and this requires replacing Euclidean self-similar cubes by the pavage property used below.
\par

Our first main result is a sub-Riemannian version of Zador's Theorem (Theorem \ref{teo_intro}) for every Carnot group equipped with a left-invariant homogeneous distance, including the Heisenberg groups. It shows that the Euclidean exponent $n$ is replaced by the homogeneous dimension $Q$, while only the absolutely continuous part of the measure contributes to the limiting constant.

\begin{teo}\label{teo_intro}
 Let $\G$ be a Carnot group of topological dimension $n$ and homogeneous dimension $Q$ equipped with a left-invariant homogeneous distance $d$. Let $C$ be the Euclidean cube $C \ceq [-1/2,1/2)^n$, where, here and in the following, we identify $\G$ with $\R^n$ by means of exponential coordinates. Let $r \geq 1$, $\delta>0$, and $\mu$ be a probability measure on $\G$ with finite $(r+\delta)$-moment, that is,
 \[
 \int_{\G}d(x,0)^{r+\delta}\, d\mu(x)<+\infty.
    \]
    Define
    \[
    Q_r(C) \ceq \inf_{N \geq 1}N^{\frac{r}{Q}}V_{N,r}(\uno_{C}\leb^n).
    \]
    Then $Q_r(C)>0$. Let $\mu=\mu_{\mathrm{ac}}+\mu_{\mathrm{s}}$ be the Lebesgue decomposition\footnote{With respect to the $n$-dimensional Lebesgue measure $\leb^n$.} of $\mu$, and set $h\ceq d\mu_{\mathrm{ac}}/d\leb^n$. Then
\begin{equation}\label{eq_zadcarnot}
\lim_{N \to +\infty}N^{\frac{r}{Q}}V_{N,r}(\mu)=Q_r(C)\nl h \nr_{L^{\frac{Q}{Q+r}}}.
\end{equation}
\end{teo}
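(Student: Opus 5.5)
The plan follows the Euclidean scheme of \cite[Ch.~6]{gl00}, with Euclidean rescaled cubes replaced by left-translated, homogeneously dilated exponential cubes. Three facts about Carnot groups drive it. The Lebesgue measure $\leb^n$ is the Haar measure. We have $\leb^n(\delta_\lambda E)=\lambda^Q\leb^n(E)$ and $d(\delta_\lambda x,\delta_\lambda y)=\lambda d(x,y)$. Finally, $C$ tiles $\G$: there is a discrete set $\Gamma\se\G$ such that $\{\gamma\cdot C\}_{\gamma\in\Gamma}$ is a partition of $\G$, and for integer $k\ge 1$ the cube $\delta_k C$ is, up to a null set, a union of $k^Q$ left translates of $C$. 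This pavage property is available in exponential coordinates because the group law is polynomial and triangular with respect to the grading. Left-invariance and homogeneity give $V_{N,r}(\tau_\# \nu)=V_{N,r}(\nu)$ for left translations $\tau$ and $V_{N,r}(\lambda^{-Q}(\delta_\lambda)_\#\nu)=\lambda^r V_{N,r}(\nu)$ for probability measures $\nu$. Every cube $g\cdot\delta_\lambda C$ therefore has the same normalized cell constant as $C$.

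\textbf{Step 1 (uniform cube, $Q_r(C)>0$, existence of the limit).} For positivity, note that the $\delta_\lambda$-homogeneous ball satisfies $\leb^n(B(a,\rho))=c\rho^Q$. Any $N$ centers leave uncovered, by the union of the balls $B(a,\rho)$ with $\rho\sim N^{-1/Q}$, a set of measure at least $1/2$. Hence $V_{N,r}(\uno_C\leb^n)\gtrsim N^{-r/Q}$. To show that $N^{r/Q}V_{N,r}(\uno_C\leb^n)$ converges to its infimum, fix $N_0$ nearly optimal and $k$ with $k^Q N_0\le N<(k+1)^QN_0$. Split $C=\delta_{1/k}(\delta_k C)$ into $k^Q$ translated copies of $\delta_{1/k}C$ and place $N_0$ rescaled optimal centers in each copy. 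This gives $V_{N,r}\le k^{-r}V_{N_0,r}$, and since $(k+1)/k\to1$ the $\limsup$ is at most $Q_r(C)$, as in \cite[Thm.~6.2]{gl00}.

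\textbf{Step 2 (finite combinations of cubes, then general densities).} First take $h=\sum_i s_i\uno_{C_i}$, with the $C_i$ disjoint dilated-translated cubes from a common pavage. The upper bound comes from allocating $N_i\approx N s_i^{Q/(Q+r)}\leb^n(C_i)/\sum_j(\cdots)$ centers to $C_i$ and applying Step 1 with the scaling law above. For the lower bound, restrict optimal centers to neighbourhoods of the cubes, use the firewall/boundary-layer argument of \cite[Lemma~6.8]{gl00}, and minimize $\sum_i s_i\leb^n(C_i)^{1+r/Q}N_i^{-r/Q}$ by H\"older. This yields $Q_r(C)\|h\|_{L^{Q/(Q+r)}}$. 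Next, for compactly supported absolutely continuous $\mu$, approximate $h$ in $L^1$ by such step functions on finer and finer pavages, and pass to the limit. For the upper bound use the subadditivity $V_{N_1+N_2,r}(\mu_1+\mu_2)\le V_{N_1,r}(\mu_1)+V_{N_2,r}(\mu_2)$, spending a vanishing fraction of the centers on the error term. For the lower bound use the monotonicity $V_{N,r}(\mu)\ge V_{N,r}(\mu_1)$ when $\mu\ge\mu_1$. A compactly supported singular part contributes $o(N^{-r/Q})$: cover it by small dilated cubes of tiny total Lebesgue measure but almost full $\mu_s$-mass, and use the uniform-cube bound.

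\textbf{Step 3 (non-compact support: Carnot Pierce lemma).} The main obstacle is a bound
\[
V_{N,r}(\mu)\le C_{r,\delta,\G}\,N^{-r/Q}\int_\G\bigl(1+d(x,0)^{r+\delta}\bigr)\,d\mu(x)\qquad\text{for all }N.
\]
I would try to prove it by decomposing $\G$ into dyadic homogeneous annuli $A_j=\{2^{j}\le d(x,0)<2^{j+1}\}$. Each annulus is covered by $O(1)$ translates of $\delta_{2^{j}}C'$ for a fixed compact cube $C'$, by homogeneity and compactness of the unit annulus. Assign $N_j\propto N2^{-j\delta'}$ centers to annulus $j$ and apply the uniform-cube estimate from Step 1 to get error $\lesssim 2^{jr}N_j^{-r/Q}\mu(A_j)$. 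Then sum using Markov's inequality $\mu(A_j)\le 2^{-j(r+\delta)}\int d^{r+\delta}\,d\mu$, choosing $\delta'$ small relative to $\delta Q/r$. The delicate point is keeping the constants uniform in $j$, which is exactly where left-invariance and homogeneity replace the Euclidean affine structure.

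With this lemma, the truncation $\mu\res B(0,R)$ differs from $\mu$ by a tail whose contribution is $\le C N^{-r/Q}\int_{B(0,R)^c}(1+d^{r+\delta})\,d\mu\to0$ uniformly in $N$. Combining the truncation with Step 2 and letting $R\to\infty$ then gives \eqref{eq_zadcarnot}.
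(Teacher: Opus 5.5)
\textbf{The gap is in Step 1.} You claim that $\delta_k C$ is, up to a null set, a union of $k^Q$ left translates of $C$. This is false in general, already in the first Heisenberg group.

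In exponential coordinates the group law is $(x,y,t)\cdot(x',y',t')=(x+x',y+y',t+t'+\tfrac12(xy'-yx'))$, and $\delta_kC=[-\tfrac k2,\tfrac k2)^2\times[-\tfrac{k^2}2,\tfrac{k^2}2)$. Over a horizontal point $(X,Y)$, the translate $(a,b,c)\cdot C$ has vertical fiber
\[
\bigl[c-\tfrac12+\tfrac12(aY-bX),\,c+\tfrac12+\tfrac12(aY-bX)\bigr),
\]
so its top face is a tilted plane unless $a=b=0$. Suppose translates of $C$ tiled $\delta_kC$ up to a null set. Then for a.e.\ $(X,Y)\in[-\tfrac k2,\tfrac k2)^2$ the fiber $[-\tfrac{k^2}2,\tfrac{k^2}2)$ would be split into unit intervals, one of which ends exactly at $\tfrac{k^2}2$. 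A translate with $(a,b)\neq(0,0)$ has its top at that height only along a line. Translates with $a=b=0$ project only onto $[-\tfrac12,\tfrac12)^2$. So for $k\ge2$ no such tiling exists; this is exactly the failure of self-similarity the paper identifies as the main obstruction.

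Consequences for your argument:
\begin{itemize}
\item The bound $V_{k^QN_0,r}(U(C))\le k^{-r}V_{N_0,r}(U(C))$ is unjustified, so you have not shown $\lim_N N^{r/Q}V_{N,r}(U(C))=Q_r(C)$.
\item Without that limit, the upper bound in Step 2 only produces $\limsup_N N^{r/Q}V_{N,r}(U(C))$. Your lower bound produces the infimum $Q_r(C)$, so the two constants do not match.
\item Your positivity argument via $\leb^n(B(a,\rho))=c\rho^Q$ is unaffected. It is correct, and simpler than the paper's ball-moment/Jensen argument.
\end{itemize}

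\textbf{How to repair Step 1.} The missing idea is to approximate $C$ rather than subdivide it.
\begin{itemize}
\item The genuine pavage $\{\delta_\lambda(p\cdot C)\}_{p\in\Z^n}$ has cells of diameter at most $\lambda\operatorname{diam}C$. Hence the union $C_\lambda$ of the cells contained in $\operatorname{int}C$ satisfies $\leb^n(C\setminus C_\lambda)\to0$ as $\lambda\to0$.
\item Set $N_1=[(1-\sigma)N]$ and $\lambda=(S/N_1)^{1/Q}$. Place $S$ rescaled near-optimal centers in each cell of $C_\lambda$; this uses at most $N_1$ centers.
\item Spend the remaining $N-N_1\approx\sigma N$ centers on $C\setminus C_\lambda$. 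That region contributes at most $\sigma^{-r/Q}\bigl(1-\leb^n(C_\lambda)\bigr)c_0\to0$. This requires knowing in advance that $K^{r/Q}V_{K,r}(\nu)\le c_0$ for all $K\ge1$ and all probability measures $\nu$ supported in $C$.
\end{itemize}
So a covering/Pierce-type estimate must be proved \emph{before} Step 1, not only for the tail.

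\textbf{Further points.}
\begin{itemize}
\item In Step 3, what you need on each annulus is a covering bound $N(B(0,R),\rho)\le c\,(R/\rho)^Q$, obtained by a packing/volume argument. An $L^r$-average bound for the uniform cube does not control the quantization of an arbitrary measure on the annulus. You also need to use the single center $0$ on annuli where $N_j<1$.
\item When passing to general densities, the lower bound cannot come from monotonicity against a step measure lying below $\mu$. For the normalized indicator of a fat Cantor set, every such step function vanishes a.e. Instead compare in the other direction, $h_k\le h+|h-h_k|$, and use subadditivity with a $\sigma N$-point net, as the paper does.
\end{itemize}
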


Our second main result identifies the asymptotic distribution of optimal centers. In analogy with the Euclidean theory \cite[Section~7.2]{gl00}, where the limiting density is proportional to \(h^{n/(n+r)}\), the Carnot analogue involves the density \(h^{Q/(Q+r)}\).

\begin{teo}[Empirical measures of optimal centers]\label{teo_empirical_centers}
Let $\mu$ satisfy the assumptions of Theorem \ref{teo_intro}, and assume that $\mu_{\mathrm{ac}}(\G)>0$. Let
\[
h\ceq \frac{d\mu_{\mathrm{ac}}}{d\leb^n}.
\]
For every $N\geq2$, let $\alpha_N\in C_{N,r}(\mu)$, where \(C_{N,r}(\mu)\) denotes the family of \(N\)-optimal sets. Then $\#\alpha_N=N$, and, setting
\[
\nu_N\ceq \frac1N\sum_{a\in\alpha_N}\delta_a,
\]
where $\delta_a$ denotes the Dirac mass at $a$, one has
\[
\nu_N
\rightharpoonup
\frac{h^{\frac{Q}{Q+r}}}{\int_\G h^{\frac{Q}{Q+r}}\,d\leb^n}\,\leb^n
\qquad\text{as }N\to+\infty .
\]
\end{teo}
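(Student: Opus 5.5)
We follow the Euclidean strategy of \cite[Section~7.2]{gl00} and derive the result from Theorem \ref{teo_intro} by a localization argument. First, $\#\alpha_N=N$. This follows because $\mu_{\mathrm{ac}}(\G)>0$ makes $\operatorname{spt}(\mu)$ infinite. Hence $V_{N,r}(\mu)<V_{N-1,r}(\mu)$: adding a point near a support point that is not a center strictly decreases the error. So an optimal set cannot have fewer than $N$ points.

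Set $\rho\ceq h^{Q/(Q+r)}/\|h^{Q/(Q+r)}\|_{L^1}$. Since the limit is a probability measure and the $\nu_N$ are probability measures, weak convergence reduces to showing
\[
\liminf_{N\to\infty}\nu_N(U)\geq \int_U \rho\, d\leb^n
\]
for every open $U$. By inner regularity it is enough to prove $\liminf_N \nu_N(A)\geq\int_A\rho\,d\leb^n$ for finite unions $A$ of left-translated, dilated exponential cubes $x\cdot\delta_\lambda(C)$, with closure in $U$ and $\mu(\partial A)=0$. Tightness follows at the same time: applying the bound to large such $A$ shows no mass escapes.

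The core step is an argument by contradiction. Suppose that along a subsequence $\nu_N(A)\to s<t\ceq\int_A\rho$. Split $\mu=\mu\res A+\mu\res A^c$. The centers in $A$ number about $sN$; those outside number about $(1-s)N$. Each point of $A$ is served by its nearest center, and after a boundary correction only centers in a small neighborhood $A_\ve$ of $A$ matter. This gives the lower bound
\[
V_{N,r}(\mu)\gtrsim V_{\lfloor (s+\ve')N\rfloor,r}(\mu\res A_{-\ve})+V_{\lfloor(1-s+\ve')N\rfloor,r}(\mu\res (A_\ve)^c).
\]
Multiplying by $N^{r/Q}$ and applying Theorem \ref{teo_intro} to both pieces, the right side tends to
\[
Q_r(C)\Big(s^{-r/Q}\|h\uno_A\|_{L^{Q/(Q+r)}}+(1-s)^{-r/Q}\|h\uno_{A^c}\|_{L^{Q/(Q+r)}}\Big),
\]
after letting $\ve\to0$ and using $\mu(\partial A)=0$. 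Theorem \ref{teo_intro} applies since both restrictions have finite $(r+\delta)$-moment; they are normalized by their masses using the homogeneity $V_{N,r}(c\mu)=cV_{N,r}(\mu)$. Write $a=\int_A h^{Q/(Q+r)}$ and $b=\int_{A^c}h^{Q/(Q+r)}$. Then $\|h\uno_A\|=a^{(Q+r)/Q}$, and an elementary convexity computation (Hölder) shows that $s\mapsto s^{-r/Q}a^{(Q+r)/Q}+(1-s)^{-r/Q}b^{(Q+r)/Q}$ is strictly convex. Its unique minimum value $(a+b)^{(Q+r)/Q}$ is attained exactly at $s=a/(a+b)=t$. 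Thus $s\neq t$ forces $\lim N^{r/Q}V_{N,r}(\mu)>Q_r(C)\|h\|_{L^{Q/(Q+r)}}$, contradicting Theorem \ref{teo_intro}. Here $Q_r(C)>0$ is used essentially. The degenerate cases $a=0$ or $b=0$ are handled with the same inequality; they use only the upper bound $V_{M,r}\leq$ something finite via Theorem \ref{teo_intro} with $\|\cdot\|=0$.

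The main obstacle is making the lower bound rigorous at the interface. The difficulty is points of $A$ whose nearest center lies outside $A$, and the reverse. The plan is a "firewall" argument: show that the Voronoi cells of optimal centers meeting a compact set have diameter $O(N^{-1/Q})$ uniformly. This would follow from a Carnot version of the Pierce-type estimate together with the $\mu$-moment bound, using the fact that a point at distance $\gg N^{-1/Q}$ from all centers, in a region of positive density, contradicts optimality. The key inputs are left-invariance and the homogeneity $\leb^n(B(x,\lambda))=c\lambda^Q$.

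Given this, the centers used for $\mu\res A_{-\ve}$ lie in $A$ for $N$ large, and the cross terms are controlled. Where $h$ vanishes on part of $A$ this local estimate fails. There one instead uses a cruder version: add $\ve N$ extra centers on a fine grid of $\partial A$'s neighborhood, which changes the error by $o(N^{-r/Q})$ as $\ve\to 0$, and argue with the modified configurations.
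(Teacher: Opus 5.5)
Your overall architecture matches the paper's. You fix a test set $A$ and let $s$ be a subsequential limit of $\nu_N(A)$. You bound $V_{N,r}(\mu)$ from below by two localized quantization errors with about $sN$ and $(1-s)N$ points, apply the Zador $\liminf$ to each piece, and then use Theorem \ref{teo_intro}, $Q_r(C)>0$ and the equality case of the allocation inequality to force $s=\int_A\rho$.

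The genuine gap is the step you flag yourself: the localized lower bound is never established, and the route you propose for it does not work as stated.

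\emph{The Voronoi-diameter plan.} A uniform $O(N^{-1/Q})$ bound on the diameters of Voronoi cells of optimal centers is a much stronger, structural statement about optimal configurations. It does not follow from Theorem \ref{teo_pierce}, which only bounds the global error from above and says nothing about where individual optimal centers sit. Your heuristic (``a point far from all centers in a region of positive density contradicts optimality'') would require a center whose removal costs less than the gain, which is of order $R^{Q+r}$ times a lower bound for $h$ near the point. The removal cost of a single center is not controlled by $V_{N,r}(\mu)/N$ without quantitative lower bounds on $h$ and control of $\mu_{\mathrm s}$ and of the tails, none of which are assumed.

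\emph{The outer piece.} Even if the bound held on compact sets, it would not handle the unbounded set $(A_\ve)^c$, where tail points may be served by centers in $A$.

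\emph{The fallback.} Adding $\ve N$ grid points is in the right spirit, but you invoke it only where $h$ vanishes and leave it unworked.

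The missing idea is that no information about the optimal configuration is needed. A shield of bounded size, independent of $N$ and placed inside the set being shielded, suffices. For a compact $K\subset\operatorname{int}(A)$, the function $x\mapsto d(x,\G\setminus A)$ is continuous and bounded below by some $m>0$ on $K$. A finite cover of $K$ by balls $B(y,m)$ with $y\in K$ then gives a finite $\Gamma_K\subset K$ with $d(x,\Gamma_K)\le d(x,\G\setminus A)$ for all $x\in K$. So every center outside $A$ is at least as far from $x\in K$ as $\Gamma_K$ is, and hence
\[
\int_K d(x,\alpha_N)^r\,d\mu(x)\geq \int_K d\bigl(x,(\alpha_N\cap A)\cup\Gamma_K\bigr)^r\,d\mu(x)\geq \mu(K)\,V_{\#(\alpha_N\cap A)+\#\Gamma_K,r}(\mu_K),
\]
where $\mu_K\ceq\mu\res K/\mu(K)$ when $\mu(K)>0$. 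Since $\#\Gamma_K$ is independent of $N$, it does not change the asymptotic fraction $s$.

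Do the same on a compact $L\subset\operatorname{int}(A^c)$ instead of the unbounded $(A_\ve)^c$, and apply Lemma \ref{lem_stepaux} to $\mu_K$ and $\mu_L$. Then exhaust $\operatorname{int}(A)$ and $\operatorname{int}(A^c)$ by compact sets, using $\eta(\partial A)=0$; this holds for your cube unions since $\leb^n(\partial A)=0$. This yields exactly the lower bound you need, with no restriction on where $h$ vanishes and no assumption on $\mu_{\mathrm s}$. This is how the paper proceeds, and with this replacement the rest of your argument goes through.
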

The proof uses the same localization as in Lemma \ref{lem_step2}, together with the equality case in the allocation inequality, and is given at the end of Section \ref{sec_qom}.
\par

The proof of the classical Zador Theorem (see \cite{gl00}), as well as some of its aforementioned generalizations, relies on the use of a self-similar tessellation of $\R^n$ made up of cubes.
 In a Carnot group, the exponential cube $C$ is not self-similar with respect to the group law and the homogeneous dilations. The substitute we use is the following \emph{pavage} property (Lemma \ref{lem_intropav}): left-translated and homogeneously dilated copies of $C$ still tile the group and can be used to approximate the Euclidean cube $C$. This provides the multiscale localization needed in place of Euclidean cubes.

\begin{lem}[{\cite[Lemma 2.4]{fgn05}}]\label{lem_intropav}
    For every $\lambda>0$, the family $ \{ \delta_\lambda(p \cdot C): p \in \Z^n \}$ is a pavage of $\G$, i.e.,
    \[
    \bigsqcup_{p \in \Z^n}\delta_\lambda(p \cdot C)=\G.
    \]
\end{lem}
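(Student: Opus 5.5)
The plan is to reduce to $\lambda=1$ and then exploit the triangular (polynomial, graded) structure of the group law in exponential coordinates. Since each $\delta_\lambda$ is a bijection of $\G$ onto itself, $\G=\bigsqcup_{p}\delta_\lambda(p\cdot C)$ holds if and only if $\G=\bigsqcup_{p\in\Z^n}p\cdot C$. So it suffices to show the following: for every $y\in\G$ there exist a \emph{unique} $p\in\Z^n$ and a \emph{unique} $x\in C$ with $p\cdot x=y$. Here a point of $\Z^n\subseteq\R^n\cong\G$ is regarded as a group element via exponential coordinates.

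\textbf{Step 1 (structure of the law).} Fix exponential coordinates adapted to the stratification. Each coordinate $j$ then carries a degree $d_j\in\{1,\dots,s\}$, and the dilations act by $\delta_\lambda(x)_j=\lambda^{d_j}x_j$. By the Baker--Campbell--Hausdorff formula, which is a finite sum by nilpotency, and by homogeneity of the bracket with respect to the grading, the product has the form
\[
(p\cdot x)_j=p_j+x_j+P_j(p,x),
\]
where $P_j$ is a polynomial depending only on the coordinates $p_i,x_i$ with $d_i<d_j$. In particular $P_j\equiv0$ when $d_j=1$. This is the one point I expect to require care: one has to check that the bracket terms in BCH contributing to a coordinate of degree $d_j$ only involve components of strictly lower degree. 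This follows because an iterated bracket of length $k\ge2$ of elements of layers $V_{i_1},\dots,V_{i_k}$ lies in $V_{i_1+\cdots+i_k}$, and each $i_l<i_1+\cdots+i_k$.

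\textbf{Step 2 (layer-by-layer solution).} I would order the coordinates by nondecreasing degree and determine $p_j$ and $x_j$ inductively. For $d_j=1$ the equation reads $y_j=p_j+x_j$. Every real number has a unique decomposition as an integer plus an element of $[-1/2,1/2)$, namely $p_j=\lfloor y_j+1/2\rfloor$ and $x_j=y_j-p_j$. Suppose all coordinates of degree $<k$ of $p$ and $x$ have been uniquely determined, and let $d_j=k$. Then $P_j(p,x)$ is already a known number, and the equation becomes
\[
p_j+x_j=y_j-P_j(p,x).
\]
The same unique decomposition gives $p_j\in\Z$ and $x_j\in[-1/2,1/2)$.

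\textbf{Step 3 (conclusion).} The induction produces existence: $y\in p\cdot C$ for the $p$ constructed. It also produces uniqueness, because at every stage the choice was forced. Hence the sets $p\cdot C$, $p\in\Z^n$, are pairwise disjoint and cover $\G$. Applying $\delta_\lambda$ then gives the statement for every $\lambda>0$.
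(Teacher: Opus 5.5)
Your proof is correct: reducing to $\lambda=1$ via bijectivity of $\delta_\lambda$, using the triangular form $(p\cdot x)_j=p_j+x_j+P_j(p,x)$ with $P_j$ depending only on coordinates of strictly lower degree (from $[V_i,V_j]\subseteq V_{i+j}$ and the finiteness of the BCH series), and applying the unique splitting $\R=\Z+[-1/2,1/2)$ layer by layer yields both covering and disjointness, without needing $\Z^n$ to be a subgroup of $\G$. The paper does not prove this lemma but quotes it from \cite[Lemma 2.4]{fgn05}, and your argument is the standard triangular-induction proof of that result.
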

In other words, we will approximate the Euclidean cube $C$ with a sequence of sets $(C_\lambda)_{\lambda>0}$ that are finite disjoint unions of suitably left-translated and homogeneously dilated copies of $C$.

The fact that one can construct pavages of Carnot groups by using left-translations and dilations of the Euclidean cubes has been exploited for many purposes, see for instance \cite{bw03,ft02,fgn05,ddmm20}.

We conclude the introduction by outlining the structure of the rest of the paper. 
In Section \ref{sec_prelim} we present the definitions and some preliminary results. In Section \ref{sec_pierce} we present a sub-Riemannian version of the so-called Pierce's Lemma (\cite{p70}, see also \cite{gl00}), Theorem \ref{teo_intropierce} below.

\begin{teo}\label{teo_intropierce}
 Let $\G$ be a Carnot group of topological dimension $n$ and homogeneous dimension $Q$ equipped with a left-invariant homogeneous distance $d$. Let $r \geq 1$, $\delta>0$, and $\mu$ be a probability measure on $\G$ with finite $(r+\delta)$-moment. Then there exists a constant $c=c(r,\delta,Q)>0$ such that, for every $N \geq 2$, 
\[
V_{N,r}(\mu) \leq c N^{-\frac{r}{Q}}\left(1+\int_\G d(x,0)^{r+\delta}\, d\mu(x)\right).
\]
\end{teo}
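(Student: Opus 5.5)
Following the Euclidean argument of Pierce (as in \cite[Lemma~6.6]{gl00}), we plan to build explicit center sets from homogeneous dyadic shells. The homogeneous structure enters through two facts. First, for a homogeneous distance, $d(\delta_\lambda x,\delta_\lambda y)=\lambda d(x,y)$. Second, the Haar measure (Lebesgue in exponential coordinates) satisfies $\leb^n(B(0,\rho))=\rho^Q\leb^n(B(0,1))$.

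\textbf{Step 1: a covering estimate.} By a standard volume argument, for every $\ve\in(0,1]$ the ball $B(0,1)$ can be covered by at most $c_0\ve^{-Q}$ balls of radius $\ve$, with $c_0$ depending only on $\G$ and $d$. To see this, take a maximal $\ve$-separated set in $B(0,1)$. The balls $B(a,\ve/2)$ centered at its points are disjoint and contained in $B(0,2)$, so by left invariance of $\leb^n$ and the scaling law the number of points is at most $(4/\ve)^Q$. Dilating by $\delta_\rho$ and translating, the same count covers any ball $B(x,\rho)$ by at most $c_0\ve^{-Q}$ balls of radius $\ve\rho$.

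\textbf{Step 2: allocation over shells.} Set $A_0=B(0,1)$ and $A_k=B(0,2^k)\setminus B(0,2^{k-1})$ for $k\ge1$. Fix $N\ge2$. We plan to put $N_k$ centers in $A_k$, with $N_k\approx N\,2^{-k\eta}$ for a small $\eta>0$ chosen so that $\sum_k N_k\le N$. For instance take $N_k=\lfloor c_1N2^{-k\eta}\rfloor$, with $c_1$ small depending on $\eta$. By Step 1, the centers in $A_k$ can be placed so that every point of $A_k$ lies within distance $\lesssim 2^kN_k^{-1/Q}$ of some center. This covers all shells with $k\le K$, where $K$ is the largest index with $N_K\ge1$, so that $2^{K\eta}\approx N$. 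For the points of $A_k$ with $k>K$, we use the trivial bound $\min_a d(x,a)^r\le d(x,0)^r$, which requires $0$ to be among the centers (this costs one center, which we can afford).

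\textbf{Step 3: summing the errors.} We obtain
\[
V_{N,r}(\mu)\lesssim \sum_{k\le K}2^{kr}N_k^{-r/Q}\mu(A_k)+\int_{d(x,0)>2^{K-1}}d(x,0)^r\,d\mu .
\]
For the first sum, $N_k^{-r/Q}\lesssim N^{-r/Q}2^{k\eta r/Q}$. Moreover $2^{k(r+\eta r/Q)}\mu(A_k)\lesssim 2^{k\cdot\eta r/Q-k\delta}\int_{A_k}(1+d(x,0))^{r+\delta}d\mu$. So if $\eta r/Q\le\delta$, the first sum is $\lesssim N^{-r/Q}(1+M)$, where $M$ denotes the $(r+\delta)$-moment. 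For the tail term, Markov's inequality gives $\lesssim 2^{-K\delta}M$, and we need this to be $\lesssim N^{-r/Q}M$. Since $2^{K}\approx N^{1/\eta}$, this requires $\delta/\eta\ge r/Q$, i.e.\ $\eta\le Q\delta/r$, the same condition as before. Choosing $\eta=Q\delta/r$ satisfies both, and then all constants depend only on $r,\delta,Q$ and on $c_0$.

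\textbf{Main obstacle.} The delicate point is Step 2's bookkeeping: keeping the floors $N_k\ge1$ consistent with the range $k\le K$ while preserving $\sum N_k\le N$, and handling boundary cases such as small $N$ (absorbed into the constant via $V_{N,r}\le\int d(x,0)^r d\mu\le 1+M$). The constant $c_0$ depends on $\leb^n(B(0,1))$ and hence on $d$. We expect this to be hidden in the stated dependence $c(r,\delta,Q)$, or else the paper's covering lemma yields a bound purely in terms of $Q$.
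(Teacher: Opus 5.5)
Your proposal is correct and is essentially the paper's proof. It uses the same dyadic shells and the same packing-based covering count, and your allocation $N_k\approx N2^{-kQ\delta/r}$ produces exactly the paper's scale $\rho_k\approx N^{-1/Q}2^{k(1+\delta/r)}$, with the geometric factor $1-2^{-Q\delta/r}$ keeping the number of centers at most $1+N/2$.

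The one difference is cosmetic. The paper takes $\rho_k=\min(aN^{-1/Q}2^{k(1+\delta/r)},2^{k+1})$, so the far shells, which are covered by the single center $0$, fall into the same sum instead of needing your separate Markov tail bound.

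Your worry about $c_0$ is unfounded. Your own volume count gives $c_0=4^Q$, because $\leb^n(B(0,1))$ cancels exactly as in the paper's covering lemma, so $c$ really depends only on $r,\delta,Q$.
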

Theorem \ref{teo_intropierce} provides a uniform integral upper bound on the quantization error and is used below to prove the sub-Riemannian Zador theorem for measures without compact support. In Section \ref{sec_qom} we prove Theorem \ref{teo_intro}, first in several special cases and then in full generality, and then prove Theorem \ref{teo_empirical_centers}.

\section{Notation and preliminary results}
\label{sec_prelim}

\begin{defi}\label{def_carnot}
   A \emph{Carnot} (or \emph{stratified}) \emph{group} $\G$ of \emph{step} $s$ is a nilpotent, connected, simply connected Lie group such that its Lie algebra $\mathfrak{g}$ admits a stratification of step $s$, i.e., there exist linear subspaces $V_1,\dots,V_s \se \mathfrak g$ such that, for every $j=1,\dots, s-1$,
    \[
    \mathfrak g= V_1 \oplus \cdots \oplus V_s,\qquad [V_1,V_j] = V_{j+1}, \qquad V_s \neq \{ 0 \}, \quad \text{and} \quad [V_s,\mathfrak g]=\{0\}.
    \]
    The exponential map $\exp:\mathfrak g \to \G$ is a diffeomorphism, and, given a basis $X_1,\dots,X_n$ adapted to the stratification and ordered so that the corresponding blocks span $V_1,\dots,V_s$, in the following we will identify $\G$ with $\R^n$ by means of exponential coordinates:
    \[
    \R^n \ni x =(x_1,\dots,x_n) \leftrightarrow \exp (x_1X_1+\cdots x_nX_n) \in \G.
    \]
    If we set $m_i \ceq \dim V_i$, then $\sum_{i=1}^s m_i=n$, which is the \emph{topological dimension} of $\G$, and $Q \ceq \sum_{i=1}^s i m_i$, which is the \emph{homogeneous dimension} of $\G$. By further identifying $\G \equiv \R^{m_1} \times \cdots \times \R^{m_s}$, we define, for $\lambda >0$, the \emph{homogeneous dilations}
    \[
    \delta_\lambda(x_1,\dots,x_s)=(\lambda x_1,\dots,\lambda^s x_s), \qquad x_i \in \R^{m_i}.
    \]
  The usual Lebesgue measure $\leb^n$ is, up to a multiplicative constant, the Haar measure of $\G$, and it is left-invariant (with respect to the group operation $\cdot$) and $Q$-homogeneous with respect to homogeneous dilations, i.e.,
  \[
  \leb^n(A)=\leb^n(p \cdot A), \qquad \leb^n(\delta_\lambda A)=\lambda^Q\leb^n(A),
  \]
  for every $p \in \G, A \se \G, \lambda>0$. We denote by $d$ a left-invariant and homogeneous distance\footnote{Such as the Carnot-Carathéodory distance.} on $\G$, i.e., a distance $d:\G \times \G \to [0,+\infty)$ such that
  \[
  d(p,q)=d(r \cdot p,r \cdot q), \qquad d(\delta_\lambda p,\delta_\lambda q)=\lambda d(p,q),
  \]
   for every $p,q,r \in \G, \lambda>0$. For every $p \in \G$ and $\rho>0$ we denote by $B(p,\rho)$ the open ball
   \[
   B(p,\rho) \ceq \{q \in \G: d(p,q)<\rho\}.
   \]
\end{defi}

Let us also recall the following \emph{pavage} property of the Euclidean cube $C= [-1/2,1/2)^n \se \G$ which will be extensively used in the sequel. For further details we refer the reader to \cite[Section 2.3]{fgn05}.
\begin{lem}\label{lem_pavage}
    For every $\lambda>0$, the family $ \{ \delta_\lambda(p \cdot C): p \in \Z^n \}$ is a pavage of $\G$, i.e.,
    \[
    \bigsqcup_{p \in \Z^n}\delta_\lambda(p \cdot C)=\G.
    \]
\end{lem}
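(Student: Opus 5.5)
The first reduction is the scaling: since $\delta_\lambda$ is a group automorphism, $\delta_\lambda(p\cdot C)=\delta_\lambda(p)\cdot\delta_\lambda(C)$. Applying the bijection $\delta_\lambda$ to a pavage gives a pavage. So it suffices to show that $\{p\cdot C: p\in\Z^n\}$ is a partition of $\G$, that is, every $x\in\G$ can be written uniquely as $x=p\cdot c$ with $p\in\Z^n$ and $c\in C$. Equivalently, $p^{-1}\cdot x\in C$ for exactly one $p\in\Z^n$.

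To prove this I will use the triangular structure of the group law in exponential coordinates. By the Baker--Campbell--Hausdorff formula and the stratification, if the coordinates are grouped by layers then
\[
(p\cdot c)_j = p_j + c_j + P_j(p_{<j},c_{<j}),
\]
where $P_j$ is a polynomial depending only on the coordinates of strictly lower degree. (For the coordinates of layer $V_1$, $P_j\equiv 0$.) To make this precise, I order the adapted basis so that the degree is nondecreasing in the index. Then the $j$-th coordinate of the product equals $p_j+c_j$ plus a polynomial in the coordinates with index $<j$. Similarly, $(p^{-1})_j=-p_j$, since $p^{-1}=\exp(-\sum p_iX_i)$.

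Next I will solve $x=p\cdot c$ by induction on $j=1,\dots,n$. Assume $p_1,\dots,p_{j-1}\in\Z$ and $c_1,\dots,c_{j-1}\in[-1/2,1/2)$ have already been uniquely determined. The equation for the $j$-th coordinate reads
\[
x_j = p_j + c_j + P_j(p_{<j},c_{<j}).
\]
Here $P_j(p_{<j},c_{<j})$ is a known real number $t_j$. The condition is $c_j = x_j - t_j - p_j\in[-1/2,1/2)$, and it determines $p_j\in\Z$ uniquely: $p_j$ is the unique integer with $x_j-t_j-p_j\in[-1/2,1/2)$, i.e. the rounding of $x_j-t_j$ with halves rounded up. Then $c_j$ is determined as well. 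This yields existence and uniqueness of the pair $(p,c)$, hence the disjoint union.

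The main obstacle is justifying the triangular form of the group law. The key point is that in $\log(\exp X\exp Y)=X+Y+\tfrac12[X,Y]+\cdots$ every bracket term of homogeneous degree $k$ involves only components of $X$ and $Y$ in layers of degree $<k$. This holds because $[V_i,V_j]\subseteq V_{i+j}$, which follows from the stratification by induction and the Jacobi identity. Projecting onto $V_k$ therefore gives $x_k+y_k$ plus a polynomial in the lower-layer components. The remaining bookkeeping is that, within a single layer, the coordinates do not interact through $P$, so any ordering inside a layer works for the induction.
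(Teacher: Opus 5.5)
Your argument is correct. The paper gives no proof of this lemma: it cites \cite[Lemma 2.4]{fgn05} (see also \cite[Section 2.3]{fgn05}), so your proposal replaces a citation with a self-contained proof.

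Your two ingredients are exactly what the statement needs. The first is the triangular form $(p\cdot c)_j=p_j+c_j+P_j(p_{<j},c_{<j})$, which follows from the Baker--Campbell--Hausdorff formula and $[V_i,V_j]\subseteq V_{i+j}$. The second is the coordinate-by-coordinate choice $p_j=\lfloor x_j-t_j+\tfrac12\rfloor$. Your induction on $j$ gives existence and uniqueness of $(p,c)$ at the same time, since at each step both $p_j$ and $c_j$ are forced.

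What your version adds over the citation is clarity about the hypotheses. It uses only two facts: $C$ is a product of half-open unit intervals, and in exponential coordinates the group law is polynomial and triangular with identity linear part. In particular, it never uses that $\Z^n$ is a subgroup of $\G$, which in general it is not. For example, in the Heisenberg group with $[X_1,X_2]=X_3$ one has $(1,0,0)\cdot(0,1,0)=(1,1,\tfrac12)$.

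Two minor remarks, neither affecting correctness. In the scaling step you only need $\delta_\lambda$ to be a bijection of $\G$, not an automorphism. And it does not matter for the induction on $j$ whether $P_j$ also depends on same-layer coordinates of smaller index, so your closing bookkeeping sentence can be dropped.
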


\begin{center}
    \begin{minipage}{0.98\linewidth}
    \centering
    \begin{tikzpicture}[x=1cm,y=1cm, scale=1.22, transform shape, line cap=round, line join=round, every node/.style={font=\footnotesize}]
    \tikzset{
    cell/.style={draw=black!55, fill=black!3, line width=0.32pt},
    cellstrong/.style={draw=black!70, fill=black!6, line width=0.36pt},
    maincell/.style={draw=blue!65!black, fill=blue!7, line width=0.55pt},
    boundary/.style={draw=black!75, line width=0.58pt},
    title/.style={anchor=west, font=\footnotesize, text=black!85},
    note/.style={text=black!70, font=\scriptsize}
    }
    \begin{scope}[shift={(0,0)}]
    \node[title] at (-1.55,1.65) {(a) Euclidean tiling};
    \foreach \i in {-2,-1,0}{
        \foreach \j in {-1,0,1}{
            \draw[cell] (0.55*\i,0.55*\j) rectangle ++(0.55,0.55);
        }
    }
    \draw[maincell] (0,0) rectangle ++(0.55,0.55);
    \node[blue!65!black, font=\scriptsize] at (0.275,0.275) {$C$};
    \end{scope}
    \begin{scope}[shift={(3.45,0)}]
    \node[title] at (-1.55,1.65) {(b) Left translations};
    \foreach \i in {-2,-1,0}{
        \foreach \j in {-1,0,1}{
            \pgfmathsetmacro{\x}{0.52*\i+0.13*\j}
            \pgfmathsetmacro{\y}{0.52*\j}
            \draw[cellstrong] (\x,\y) -- ++(0.52,0.04) -- ++(0.13,0.52) -- ++(-0.52,-0.04) -- cycle;
        }
    }
    \draw[maincell] (0,0) -- ++(0.52,0.04) -- ++(0.13,0.52) -- ++(-0.52,-0.04) -- cycle;
    \end{scope}
    \begin{scope}[shift={(7.05,0)}]
    \node[title] at (-1.55,1.65) {(c) Inner approximation};
    \fill[black!10] (-1.35,-1.0) rectangle (1.35,1.0);
    \fill[white] (-1.02,-0.72) rectangle (1.02,0.72);
    \draw[boundary] (-1.35,-1.0) rectangle (1.35,1.0);
    \node[anchor=north east] at (1.28,0.93) {$C$};
    \begin{scope}
    \clip (-1.02,-0.72) rectangle (1.02,0.72);
    \foreach \i in {-3,-2,-1,0,1,2}{
        \foreach \j in {-2,-1,0,1,2}{
            \pgfmathsetmacro{\x}{0.31*\i+0.06*\j}
            \pgfmathsetmacro{\y}{0.31*\j}
            \draw[cellstrong] (\x,\y) -- ++(0.31,0.03) -- ++(0.06,0.31) -- ++(-0.31,-0.03) -- cycle;
        }
    }
    \end{scope}
    \draw[maincell] (-0.01,-0.01) -- ++(0.31,0.03) -- ++(0.06,0.31) -- ++(-0.31,-0.03) -- cycle;
    \node[note, align=center] at (0,-1.18) {$C_\lambda\subset C$};
    \end{scope}
    \end{tikzpicture}

    {\small \textbf{Figure.} Schematic pavage. Left-translated and dilated copies of \(C\) tile the group; the cells contained in \(C\) form the inner approximation \(C_\lambda\). The drawings are projections.}
    \end{minipage}
\end{center}

\begin{notation}
    In the following, unless otherwise specified, $\G$ will denote a Carnot group of topological dimension $n$ and homogeneous dimension $Q$ which is equipped with a left-invariant and homogeneous distance $d$. Moreover, we will assume $r \geq 1$ and we will use $\mu$ to denote a probability measure on $\G$ with finite $r$-moment, that is
\[
\int_\G d(x,0)^r\, d\mu(x)<+\infty.
\]
Moreover, we will use $\delta$ to denote a positive real number, $[\cdot]$ to denote the integer part function and $C$ to denote the Euclidean cube $[-1/2,1/2)^n$, where, here and in the following, we identify $\G$ with $\R^n$ by means of exponential coordinates. 
\end{notation}

\begin{defi}
Let $A \se \G$ be a Borel set with $0<\leb^n(A)<+\infty$, let $N \in \N$, and let $\mu$ be a measure on $\G$ such that $\mu(\G)$ is positive and finite. Then we define:
\begin{itemize}
\item the \emph{uniform measure} $U(A)$ of $A$ as
\[
U(A) \ceq\frac{\uno_{A}\leb^{n}}{\leb^{n}(A)};
\]
\item the \emph{$N$-th quantization error of order $r$} $V_{N,r}(\mu)$ as
    \[
    V_{N,r}(\mu) \ceq \inf_{\alpha \se \G,1\leq\#\alpha \leq N} \int_{\G} \min_{a \in \alpha}d(a,y)^r\,d\mu(y);
    \]
\item the set of \emph{$N$-optimal centers for $\mu$ of order $r$} $C_{N,r}(\mu)$: we say that $\alpha \se \G$ with $1\leq\#\alpha \leq N$ belongs to $C_{N,r}(\mu)$ if
    \[
    V_{N,r}(\mu)=\int_{\G} \min_{a \in \alpha}d(a,y)^r\,d\mu(y);
    \]
\item the \emph{normalized $N$-th quantization error of order $r$ for $A$} $M_{N,r}(A)$ as
\[
M_{N,r}(A) \ceq \frac{V_{N,r}(U(A))}{\leb^n(A)^{\frac{r}{Q}}};
\]
\item and
\[
Q_r(A) \ceq  \inf_{N \geq 1}{N^{\frac{r}{Q}}}M_{N,r}(A).
\]
\end{itemize}
\end{defi}
In the following lemma we collect the basic properties of the above functionals that will be useful in the sequel. The proof of the lemma below follows immediately from the definitions above.\footnote{For a proof of the Euclidean counterpart of Lemma \ref{lem_basicpro}, see for instance \cite{gl00}.}
\begin{lem}\label{lem_basicpro}
   The following properties hold for every Borel set $A \se \G$ with $0<\leb^n(A)<+\infty$, $p \in \G$, $\lambda>0$, and $N \in \N$.
   \begin{itemize}
          \item[(i)] $V_{N,r}(U(p \cdot A))=V_{N,r}(U(A))$, \quad $V_{N,r}(U(\delta_\lambda A))=\lambda^rV_{N,r}(U(A))$;
       \item[(ii)] $p \cdot C_{N,r}(\mu)=C_{N,r}((p \cdot)_\#\mu)$, \quad $\delta_\lambda(C_{N,r}(\mu))=C_{N,r}((\delta_\lambda)_\#\mu)$; equivalently, $p\cdot C_{N,r}(\mu)$ denotes $\{p\cdot\alpha:\alpha\in C_{N,r}(\mu)\}$ and similarly for $\delta_\lambda(C_{N,r}(\mu))$.
       \item[(iii)] $M_{N,r}(p \cdot A)=M_{N,r}(A)$, \quad $M_{N,r}(\delta_\lambda A)=M_{N,r}(A)$;
       \item[(iv)] if $\mu=\sum_{\ell=1}^m s_\ell \mu_\ell$ with $\sum_{\ell=1}^m s_\ell=1$, $0 \leq s_\ell \leq 1$, and $\mu_\ell$ are probability measures on $\G$ with finite $r$-moment, then
       \begin{itemize}
           \item[(iv.A)] $V_{N,r}$ is concave, i.e.,
\[
V_{N,r}(\mu) \geq \sum_{\ell=1}^m s_\ell V_{N,r}(\mu_\ell).
\]
       \item[(iv.B)] if $N_\ell \in \N$ and $\sum_{\ell=1}^m N_\ell \leq N$, then
       \[
       V_{N,r}(\mu) \leq \sum_{\ell=1}^m s_\ell V_{N_\ell,r}(\mu_\ell).
       \]
              \end{itemize}
\item[(v)] if $\mu=s\mu_1+(1-s)\mu_2$ with $0\leq s \leq 1$, $\mu_1,\mu_2$ probability measures on $\G$ with finite $r$-moment, and
\[
\lim_{N \to +\infty}N^{\frac{r}{Q}}V_{N,r}(\mu_1) = c \in [0,+\infty),
\]
then
\begin{itemize}
    \item[(v.A)]
    \begin{align*}
\liminf_{N \to +\infty} N^{\frac{r}{Q}}V_{N,r}(\mu)
&\geq sc+(1-s)\liminf_{N \to +\infty}N^{\frac{r}{Q}}V_{N,r}(\mu_2),\\
\limsup_{N \to +\infty} N^{\frac{r}{Q}}V_{N,r}(\mu)
&\leq s(1-\ve)^{-\frac{r}{Q}}c+(1-s)\ve^{-\frac{r}{Q}}\limsup_{N \to +\infty}N^{\frac{r}{Q}}V_{N,r}(\mu_2),
    \end{align*}
    for every $0<\ve<1$;
    \item[(v.B)] if
    \[
    \lim_{N \to +\infty}N^{\frac{r}{Q}}V_{N,r}(\mu_2)=0,
    \]
    then
    \[
    \lim_{N \to +\infty}N^{\frac{r}{Q}}V_{N,r}(\mu)=sc.
    \]
\end{itemize}
   \end{itemize}
\end{lem}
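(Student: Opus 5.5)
The proof consists of direct verifications from the definitions, using left-invariance and homogeneity of $d$ and of $\leb^n$. For (i) and (ii), fix $\alpha\se\G$ with $1\le\#\alpha\le N$ and change variables $y=p\cdot z$. Left-invariance of $\leb^n$ gives $(p\cdot)_\#U(A)=U(p\cdot A)$, and left-invariance of $d$ gives
\[
\int\min_{a\in p\cdot\alpha}d(a,y)^r\,d(p\cdot)_\#\mu(y)=\int\min_{b\in\alpha}d(b,z)^r\,d\mu(z).
\]
Since $\alpha\mapsto p\cdot\alpha$ is a bijection on admissible center sets, the infima coincide and the minimizers correspond, which proves both statements about $p$.

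For dilations, $(\delta_\lambda)_\#U(A)=U(\delta_\lambda A)$ holds because $\leb^n(\delta_\lambda B)=\lambda^Q\leb^n(B)$, so the normalizing constants cancel. Homogeneity $d(\delta_\lambda a,\delta_\lambda y)=\lambda d(a,y)$ then gives the factor $\lambda^r$, and $\alpha\mapsto\delta_\lambda\alpha$ maps optimal sets of $\mu$ to optimal sets of $(\delta_\lambda)_\#\mu$. Item (iii) follows from (i): translation changes neither numerator nor denominator of $M_{N,r}$, while dilation multiplies the numerator by $\lambda^r$ and the denominator by $(\lambda^Q)^{r/Q}=\lambda^r$.

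For (iv.A), note that for each fixed $\alpha$ the functional $\int\min_{a\in\alpha}d(a,\cdot)^r\,d\mu$ is linear in $\mu$. The infimum of this linear family is concave, giving
\[
\int\min_\alpha d^r\,d\mu=\sum_\ell s_\ell\int\min_\alpha d^r\,d\mu_\ell\ge\sum_\ell s_\ell V_{N,r}(\mu_\ell),
\]
and we then take the infimum over $\alpha$. For (iv.B), choose $\alpha_\ell$ with $\#\alpha_\ell\le N_\ell$ that are $\ve$-optimal for $\mu_\ell$ and set $\alpha=\bigcup_\ell\alpha_\ell$, so $\#\alpha\le N$. Since $\min_{a\in\alpha}\le\min_{a\in\alpha_\ell}$ pointwise, we get $V_{N,r}(\mu)\le\sum_\ell s_\ell(V_{N_\ell,r}(\mu_\ell)+\ve)$, and we let $\ve\to0$. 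If some $N_\ell=0$, the convention must be that such a term is only allowed when $s_\ell=0$; otherwise the inequality has to be read with $V_{0,r}=+\infty$.

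For (v.A), the liminf bound follows from (iv.A) multiplied by $N^{r/Q}$, together with superadditivity of $\liminf$. For the limsup bound, fix $\ve\in(0,1)$ and apply (iv.B) with $N_1=[(1-\ve)N]$ and $N_2=[\ve N]$. Then $N^{r/Q}V_{N_1,r}(\mu_1)=(N/N_1)^{r/Q}N_1^{r/Q}V_{N_1,r}(\mu_1)\to(1-\ve)^{-r/Q}c$, because $N_1/N\to1-\ve$. Similarly, the limsup of $N^{r/Q}V_{N_2,r}(\mu_2)$ is at most $\ve^{-r/Q}\limsup_M M^{r/Q}V_{M,r}(\mu_2)$, since $N_2\to\infty$ and $N/N_2\to\ve^{-1}$. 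Combining the two gives the stated bound.

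For (v.B), the liminf bound gives $\liminf\ge sc$, because the second term is nonnegative and its limit is $0$. The limsup bound gives $\limsup\le s(1-\ve)^{-r/Q}c$ for every $\ve$, and letting $\ve\to0$ yields $\limsup\le sc$.

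The only delicate step is the integer-part bookkeeping in (v.A). One must check that $N_1,N_2\ge1$ for large $N$, and that passing from $\lim$ along the full sequence to $\lim$ along the subsequence $N_1$ is legitimate. Both hold because $N_1\to\infty$ monotonically in a non-decreasing way, hitting every large integer. Everything else is routine change of variables.
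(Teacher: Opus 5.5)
Your proposal is correct and carries out the direct verification from the definitions that the paper intends: the paper writes no proof and defers to the Euclidean argument in \cite{gl00}, and your version replaces Euclidean translations and scalings by left-invariance and homogeneity of $d$ and $\leb^n$. One small simplification in (v.A): you only need $N_1(N),N_2(N)\to+\infty$ to transfer the limit of $M^{r/Q}V_{M,r}(\mu_1)$ and the $\limsup$ for $\mu_2$ along these index sequences, so the remarks about monotonicity and ``hitting every large integer'' are unnecessary.
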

We now prove some preliminary results, namely Lemmas \ref{lem_existence} and \ref{lem_card}, which ensure that, for every $N \in \N$, there exists an $N$-optimal set, and that, under the support condition in Lemma \ref{lem_card}, every such set has cardinality exactly $N$; Theorem \ref{teo_wp} ensures that the quantization problem is well posed, that is, the quantization error goes to $0$ as the number of approximating points goes to infinity.

\begin{lem}\label{lem_existence}

    For every $N \in \N$ one has $C_{N,r}(\mu) \neq \emptyset$.
\end{lem}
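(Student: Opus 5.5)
We argue by the direct method, following the Euclidean scheme in \cite{gl00}. Fix $N$ and write
\[
F(a_1,\dots,a_N)\ceq\int_\G \min_{1\le i\le N} d(a_i,y)^r\,d\mu(y),\qquad (a_1,\dots,a_N)\in\G^N.
\]
Sets $\alpha$ with $1\le\#\alpha\le N$ correspond to $N$-tuples, with repetitions allowed, so $V_{N,r}(\mu)=\inf_{\G^N}F$. The functional $F$ is finite everywhere, because $d(a,y)^r\le 2^{r-1}(d(a,0)^r+d(y,0)^r)$ and $\mu$ has finite $r$-moment. It is also continuous on $\G^N$: the integrand is continuous in $(a_i)$, and locally uniformly in $(a_i)$ it is dominated by an integrable function of the same form, so dominated convergence applies. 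Recall that $(\G,d)$ is boundedly compact, since closed $d$-balls are compact (homogeneous distances induce the Euclidean topology). If $F$ were coercive we could simply minimize over a compact set. It is not, because centers escaping to infinity only cost nothing while other centers remain, so we proceed by induction on $N$.

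\emph{Case $N=1$.} We have $F(a)\ge \mu(B(0,1))\,(d(a,0)-1)^r$ whenever $d(a,0)\ge1$. Fix $R_0$ with $\mu(B(0,R_0))>0$, say $\mu(B(0,R_0))\ge 1/2$. Then $F(a)\to+\infty$ as $d(a,0)\to\infty$, so a minimizer exists on a closed ball by continuity and compactness.

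\emph{Inductive step.} Assume $C_{N-1,r}(\mu)\neq\emptyset$. If $V_{N,r}(\mu)=V_{N-1,r}(\mu)$, any $(N-1)$-optimal set is $N$-optimal. Otherwise $V_{N,r}(\mu)<V_{N-1,r}(\mu)$. Take a minimizing sequence $\alpha^k=(a_1^k,\dots,a_N^k)$ with $F(\alpha^k)\to V_{N,r}(\mu)$. Passing to a subsequence, each coordinate either stays in a bounded set or satisfies $d(a_i^k,0)\to\infty$. Suppose, to reach a contradiction, that some coordinate diverges. Since not all can diverge (that would force $F\to\infty$, exactly as in the case $N=1$), let $I\neq\emptyset$ be the indices of the bounded coordinates, with $\#I\le N-1$. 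Extract convergent subsequences $a_i^k\to a_i$ for $i\in I$. For each fixed $y$, the terms $d(a_j^k,y)\to\infty$ for $j\notin I$, so
\[
\min_{i\le N} d(a_i^k,y)^r \to \min_{i\in I} d(a_i,y)^r .
\]
By Fatou's lemma,
\[
V_{N-1,r}(\mu)\le \int_\G \min_{i\in I} d(a_i,y)^r\,d\mu(y)\le \liminf_k F(\alpha^k)=V_{N,r}(\mu),
\]
which contradicts the strict inequality. Hence all coordinates stay bounded, a subsequence converges to some $\alpha\in\G^N$, and $F(\alpha)=V_{N,r}(\mu)$ by continuity of $F$ (Fatou's lemma alone already gives $\le$). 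Thus $\alpha\in C_{N,r}(\mu)$.

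The main obstacle is the loss of compactness from escaping centers. It is handled by the dichotomy $V_N=V_{N-1}$ versus $V_N<V_{N-1}$ together with the pointwise limit and Fatou's lemma. Apart from that, the only input specific to Carnot groups is that $(\G,d)$ is proper and $d$ is continuous, which holds for any homogeneous distance.
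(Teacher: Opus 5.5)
Your proof is correct, but it takes a different route from the paper's.

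\textbf{The paper's route.} The paper works in the space of probability measures. It takes a $W_r$-minimizing sequence $\mu_N^k$ with $\#\operatorname{spt}(\mu_N^k)\le N$ and gets tightness from the uniform bound on $W_r(\mu_N^k,\delta_0)$. It then extracts a narrow limit $\bar\mu_N$ by Prokhorov, uses Portmanteau on small disjoint balls to show $\#\operatorname{spt}(\bar\mu_N)\le N$, and concludes by lower semicontinuity of $W_r$. In that formulation escaping centers cause no trouble: atoms that run off to infinity must carry vanishing mass, so they disappear in the limit and no induction is needed.

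\textbf{Your route.} You work directly on the configuration space $\G^N$, where escaping centers are a genuine loss of compactness. You handle this with the Graf--Luschgy dichotomy. If $V_{N,r}(\mu)=V_{N-1,r}(\mu)$, you inherit an optimal set from the inductive hypothesis. If the inequality is strict, Fatou's lemma applied to the pointwise limit $\min_{i\in I}d(a_i,y)^r$ shows that no coordinate of a minimizing sequence can escape.

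\textbf{Comparison.} Your argument is more elementary: it uses only dominated convergence, Fatou and properness of $(\G,d)$. It needs neither the $W_r$-reformulation (which the paper invokes without proof), nor Prokhorov, nor lower semicontinuity of $W_r$. It also gives extra information: whenever $V_{N,r}(\mu)<V_{N-1,r}(\mu)$, every minimizing sequence of $N$-tuples is bounded. The paper's argument, in exchange, avoids the induction and the case distinction altogether.

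\textbf{A small slip.} In the case $N=1$ the lower bound should read $F(a)\ge\mu(B(0,R_0))\,(d(a,0)-R_0)^r$ for $d(a,0)\ge R_0$, since $\mu(B(0,1))$ may vanish. The same bound with $\min_i d(a_i^k,0)$ in place of $d(a,0)$ handles the sub-case where all coordinates diverge.
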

\begin{proof}
We use the equivalent formulation
\[
V_{N,r}(\mu)=\inf_{\nu:\,1\leq\#\operatorname{spt}(\nu)\leq N} W_r(\mu,\nu)^r,
\]
which follows from the usual reduction of optimal transport to the nearest-point projection onto the support of $\nu$.
For every $N \in \N$ let $(\mu^k_N)_{k \in \N}$ be a sequence of probability measures such that $\#\operatorname{spt}(\mu_N^k) \leq N$ which is minimizing (with respect to the Wasserstein distance $W_r$), that is, 
\[
W_r(\mu,\mu_{N}^k)^r \xrightarrow{k \to +\infty}V_{N,r}(\mu).
\]
We claim that, up to a subsequence, there exists a probability measure $\bar \mu_N$ such that $\mu_N^k \rightharpoonup \bar \mu_N$. By Prokhorov's Theorem the claim holds if, for every $\ve>0$, there exists a compact set $K_\ve \se \G$ such that $\mu_{N}^k(\G \setminus K_\ve)<\ve$ for every $k \in \N$ (i.e., the sequence is tight). Since $\mu$ has finite $r$-moment, we have $W_r(\mu,\delta_0)<+\infty$. Moreover, since
\[
W_r(\mu,\mu_N^k)^r \xrightarrow{k \to +\infty}V_{N,r}(\mu),
\]
there exists a constant $C>0$ such that
\[
W_r(\mu,\mu_N^k)\leq C
\]
for every $k \in \N$. Hence, by the triangle inequality,
\[
W_r(\mu_N^k,\delta_0)\leq W_r(\mu_N^k,\mu)+W_r(\mu,\delta_0)\leq C+W_r(\mu,\delta_0)
\]
for every $k \in \N$. Therefore there exists a constant $c>0$, independent of $k$, such that
\[
W_r(\mu_N^k,\delta_0)^r\leq c
\]
for every $k \in \N$. Let $\ve>0$, and $R>0$ to be determined later and define $K_\ve \ceq \overline{B(0,R)}$. Then for every $k \in \N$ we have
\[
c \geq W_r(\mu_N^k,\delta_{0})^r \geq \int_{\G \setminus K_\ve}d(x,0)^r\,d \mu_N^k(x) \geq R^r \mu_N^k(\G \setminus K_\ve).
\]
Choosing $R\geq (c/\ve)^{1/r}$ is enough to prove the claim. Let us prove that $\#\operatorname{spt}(\bar \mu_N) \leq N$. We argue by contradiction: assume $\#\operatorname{spt}(\bar \mu_N) > N$; then there exist $N+1$ distinct points $\{x_1,\dots,x_{N+1}\} \se \operatorname{spt}(\bar \mu_N) $. The latter implies there exists $\rho>0$ such that the open balls $\{B(x_1,\rho),\dots,B(x_{N+1},\rho)\}$ are disjoint and $\bar \mu_N(B(x_i,\rho))>0$ for every $1 \leq i \leq N+1$. By Portmanteau's Theorem we have 
\[
\bar \mu_N(B(x_i,\rho)) \leq \liminf_{k \to +\infty} \mu^k_N(B(x_i,\rho)),
\]
the latter implying that for every $1 \leq i \leq N+1$ there exists $k_i \in \N$ such that, for every $k \geq k_i$, one has
\[
\mu_N^k(B(x_i,\rho))>0,
\]
obtaining that, whenever $k \geq  \max_{1 \leq i \leq N+1}k_i$, $\#\operatorname{spt}(\mu_N^k) > N$, obtaining a contradiction. Hence $\#\operatorname{spt}(\bar \mu_N) \leq N$ and
\begin{equation}\label{eq_boundsup}
W_r(\mu,\bar \mu_N)^r \geq V_{N,r}(\mu).
\end{equation}
By the lower semicontinuity of the Wasserstein distance $W_r$ we infer
\[
W_r(\mu,\bar \mu_N)^r \leq \liminf_{k \to +\infty} W_r(\mu,\mu^k_N)^r=V_{N,r}(\mu).
\]
Combining the latter with \eqref{eq_boundsup} we obtain $W_r(\mu,\bar \mu_N)^r =V_{N,r}(\mu)$. Let $\alpha\ceq \operatorname{spt}(\bar\mu_N)$. Since $\#\alpha\leq N$, by definition of $V_{N,r}$ we have
\[
V_{N,r}(\mu)\leq \int_\G \min_{a\in\alpha} d(a,y)^r\,d\mu(y).
\]
On the other hand, since $\bar\mu_N$ is supported on $\alpha$,
\[
W_r(\mu,\bar\mu_N)^r\geq \int_\G \min_{a\in\alpha} d(a,y)^r\,d\mu(y).
\]
Therefore
\[
V_{N,r}(\mu)=\int_\G \min_{a\in\alpha} d(a,y)^r\,d\mu(y),
\]
that is, $\alpha\in C_{N,r}(\mu)$.
\end{proof}
\begin{lem}\label{lem_card}
    For every $2 \leq N \in\N$ such that $\#\operatorname{spt}(\mu)>N-1$ and for every $\alpha \in C_{N,r}(\mu)$ one has $\#\alpha=N$. 
\end{lem}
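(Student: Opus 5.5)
The plan is to argue by strict monotonicity: the first step shows $V_{N,r}(\mu)<V_{N-1,r}(\mu)$, and the second step shows that an optimal set with fewer than $N$ points would contradict this inequality.

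\emph{Step 1: the strict inequality.} Take $\beta\in C_{N-1,r}(\mu)$, which is nonempty by Lemma \ref{lem_existence}. Since $\#\beta\le N-1<\#\operatorname{spt}(\mu)$, there is a point $x_0\in\operatorname{spt}(\mu)\setminus\beta$. Set
\[
\rho\ceq \min_{b\in\beta}d(b,x_0)>0,
\]
which is positive because $\beta$ is finite. Now consider the enlarged set $\beta'\ceq\beta\cup\{x_0\}$, which has $\#\beta'\le N$. For every $y\in B(x_0,\rho/3)$:
\begin{itemize}
\item the distance to the new center satisfies $d(x_0,y)<\rho/3$;
\item by the triangle inequality, $\min_{b\in\beta}d(b,y)>2\rho/3$.
\end{itemize}
Hence on this ball the integrand strictly decreases, $\min_{a\in\beta'}d(a,y)^r<\min_{b\in\beta}d(b,y)^r$. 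Everywhere else the integrand does not increase. The ball has positive $\mu$-measure because $x_0\in\operatorname{spt}(\mu)$. Integrating therefore gives $V_{N,r}(\mu)<V_{N-1,r}(\mu)$. All quantities are finite since $\mu$ has finite $r$-moment, so the strict inequality between the integrals is legitimate.

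\emph{Step 2: conclusion.} Let $\alpha\in C_{N,r}(\mu)$ and suppose, for contradiction, that $\#\alpha\le N-1$. Then $\alpha$ is admissible for the problem defining $V_{N-1,r}(\mu)$, so
\[
V_{N-1,r}(\mu)\le \int_\G \min_{a\in\alpha}d(a,y)^r\,d\mu(y)=V_{N,r}(\mu),
\]
which contradicts Step 1. Hence $\#\alpha=N$.

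The argument is routine. The only point needing care is the choice of the ball $B(x_0,\rho/3)$, which must guarantee strict improvement on a set of positive $\mu$-measure. This works because $\mu$ charges every neighbourhood of a point of its support and $x_0$ lies at positive distance from the finite set $\beta$. No Carnot structure is used beyond $d$ being a metric.
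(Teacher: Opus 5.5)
Your proof is correct and follows the paper's argument essentially verbatim. The paper also takes $\beta\in C_{N-1,r}(\mu)$, adds a point $x\in\operatorname{spt}(\mu)\setminus\beta$, and obtains strict improvement on a small ball (radius $\zeta/4$ rather than your $\rho/3$) to get $V_{N,r}(\mu)<V_{N-1,r}(\mu)$; your Step 2 and your finiteness remark make explicit two points the paper leaves implicit.
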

\begin{proof}
    We just need to prove that, if $\#\operatorname{spt}(\mu)>N-1$, then 
    \[
    V_{N,r}(\mu)<V_{N-1,r}(\mu).
    \]
    Let $\beta \in C_{N-1,r}(\mu)$. Since $\#\beta \leq N-1$, we can choose $x \in \operatorname{spt}(\mu) \setminus \beta$ and define $\zeta \ceq \min_{b \in \beta}d(x,b)$. Since $x \notin \beta$ and $\beta$ is finite, we have $\zeta>0$. Since $x \in \operatorname{spt}(\mu)$, we have
    \begin{equation}\label{eq_zeta4}
    \mu(B(x,\tfrac{\zeta}{4}))>0.
    \end{equation}
    For every $y \in B(x,\tfrac{\zeta}{4})$ we have, by the triangle inequality,
    \[
    \min_{b \in \beta}d(y,b) \geq \min_{b \in \beta}d(x,b) -d(x,y) \geq \zeta -\frac{\zeta}{4}=\frac{3\zeta}{4}.
    \]
    and
    \[
    \min_{b \in \beta \cup \{x\}} d(y,b) \leq d(y,x)<\frac{\zeta}{4}.
    \]
    Therefore for every $y \in B(x,\tfrac{\zeta}{4})$ we have the following inequality
    \[
    \min_{b \in \beta \cup \{x\}} d(y,b)^r \leq \left(  \frac{\zeta}{4}  \right) ^r< \left(  \frac{3\zeta}{4}  \right) ^r \leq     \min_{b \in \beta}d(y,b)^r.
    \]
    Recalling \eqref{eq_zeta4} and integrating the latter, one obtains the strict inequality
    \[
V_{N,r}(\mu) \leq \int_\G   \min_{b \in \beta \cup \{x\}} d(y,b)^r\,d\mu(y) <  \int_\G \min_{b \in \beta}d(y,b)^r\,d\mu(y)=V_{N-1,r}(\mu
 ),
    \]
    concluding the proof.
\end{proof}
\begin{teo}\label{teo_wp}
    Let $\mu$ be a probability measure on $\G$ with finite $r$-moment. Then
    \[
   \lim_{N \to +\infty} V_{N,r}(\mu)=0.
    \]
\end{teo}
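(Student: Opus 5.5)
The plan is to show directly that $V_{N,r}(\mu)\to 0$ by building explicit finite sets of centers, with the tail controlled by the finite $r$-moment. The sequence $V_{N,r}(\mu)$ is nonincreasing in $N$, since any set with at most $N$ points is admissible at level $N+1$. So it is enough to produce, for every $\ve>0$, some $N$ and a set $\alpha$ with $\#\alpha\leq N$ and $\int_\G \min_{a\in\alpha} d(a,y)^r\,d\mu(y)<\ve$.

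\textbf{Step 1: splitting off the tail.} Fix $\ve>0$. Because $\int_\G d(x,0)^r\,d\mu(x)<+\infty$, dominated convergence gives a radius $R>0$ with
\[
\int_{\G\setminus \overline{B(0,R)}} d(x,0)^r\,d\mu(x)<\ve/2 .
\]
We will always include the origin $0$ among the centers. Then on $\G\setminus\overline{B(0,R)}$ the integrand $\min_{a\in\alpha}d(a,y)^r$ is at most $d(y,0)^r$, so the contribution from outside the ball is below $\ve/2$.

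\textbf{Step 2: covering the ball.} The closed ball $K\ceq\overline{B(0,R)}$ is compact. This holds because a homogeneous left-invariant distance induces the Euclidean topology in exponential coordinates, and its balls are bounded there.

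Choose $\eta>0$ with $\eta^r<\ve/2$. By compactness, $K$ is covered by finitely many balls $B(a_1,\eta),\dots,B(a_M,\eta)$. An alternative route is Lemma \ref{lem_pavage}: with $\lambda$ small, the cells $\delta_\lambda(p\cdot C)$ meeting $K$ are finitely many and each has diameter $\lambda\,\mathrm{diam}(C)$.

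Set $\alpha\ceq\{0,a_1,\dots,a_M\}$ and $N\ceq M+1$. Every $y\in K$ satisfies $\min_{a\in\alpha}d(a,y)^r<\eta^r$. Hence
\[
V_{N,r}(\mu)\leq \int_K \eta^r\,d\mu+\int_{\G\setminus K} d(y,0)^r\,d\mu(y)<\frac{\ve}{2}+\frac{\ve}{2}=\ve .
\]
By monotonicity, the same bound holds for all larger $N$, which concludes the argument.

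\textbf{Main obstacle.} The only non-routine point is the compactness of closed $d$-balls, or equivalently that $d$ induces the manifold topology. This is standard for homogeneous distances on Carnot groups. If one wants to avoid citing it, the pavage by cells of small diameter from Lemma \ref{lem_pavage} gives the needed finite covering of $K$ directly, provided $K$ is known to be bounded in exponential coordinates.
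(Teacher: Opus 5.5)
Your proof is correct and follows essentially the paper's argument. In both, the origin is one of the centers, so the finite $r$-moment controls the tail, and the remaining mass is covered by balls of radius roughly $\ve^{1/r}$. The only difference is how the cover is made finite. The paper covers $\G$ by countably many balls centered at points of $\mathbb{Q}^n$ (with $a_1=0$) and truncates the induced partition by countable additivity. You instead truncate spatially at radius $R$ and take a finite subcover of the compact ball $\overline{B(0,R)}$. Both routes rely on $d$ inducing the Euclidean topology, and your explicit remark that $V_{N,r}(\mu)$ is nonincreasing in $N$ supplies a step the paper leaves implicit.
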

\begin{proof}
    Let $\{a_1,a_2,\dots\}$ be any enumeration of $\mathbb Q^{n}$ with $a_1=0$. For $\ve >0$ the family of open balls $B(a_k,(\ve/2)^\frac{1}{r})$ is a covering of $\G$. Therefore one can find a countable partition $(A_k)_{k \in \N}$ of $\G$ such that $A_k \se B(a_k,(\ve/2)^\frac{1}{r})$. Since $\mu$ has finite $r$-th moment there exists $\bar N \in \N$ such that 
    \[
   \sum_{k > \bar N} \int_{A_k} d(x,0)^rd\mu(x) \leq \frac{\ve}{2}.
    \]
    Then we conclude by observing
    \[
    V_{\bar N,r}(\mu) \leq\sum_{k \in \N}\int_{A_k} \min_{1 \leq i \leq \bar N} d(a_i,x)^rd\mu(x).
    \]
    For every $1\leq k\leq \bar N$ and every $x\in A_k$, we have
    \[
    \min_{1 \leq i \leq \bar N} d(a_i,x)^r \leq d(a_k,x)^r<\frac{\ve}{2},
    \]
    while for every $k>\bar N$ and every $x\in A_k$, since $a_1=0$, we have
    \[
    \min_{1 \leq i \leq \bar N} d(a_i,x)^r \leq d(a_1,x)^r=d(0,x)^r.
    \]
    Therefore
    \[
    V_{\bar N,r}(\mu) \leq \sum_{k=1}^{\bar N} \int_{A_k} \frac{\ve}{2}\,d\mu(x)+\sum_{k > \bar N} \int_{A_k}d(0,x)^rd\mu(x) \leq \frac{\ve}{2}\mu(\G)+\frac{\ve}{2}=\ve.
    \]
\end{proof}

\section{Pierce's Lemma in Carnot groups}\label{sec_pierce}
Before proving a sub-Riemannian version of Pierce's Lemma (Theorem \ref{teo_pierce}), we recall the definition of covering numbers and a preliminary result about the covering number of balls in Carnot groups.

\begin{defi}
    Let $A$ be a non-empty precompact subset of $\G$: the \emph{covering number} $N(A,\rho)$ of $A$ of radius $\rho>0$ is the smallest number of open balls of radius $\rho$ that cover $A$, i.e.,
    \[
    N(A,\rho)\ceq \min \left\{ N \in \N: \exists \{p_i\}_{i=1}^N \se \G, \ A \se \bigcup_{i=1}^N B(p_i,\rho)  \right\}.
    \]
\end{defi}
The following covering estimate follows from the standard maximal-packing argument; compare \cite[Lemma~2.10]{ai25}.
\begin{lem}\label{lem_covering}
    Let $R>\rho>0$. Then there exists a set $\alpha \subset B(0,R)$ with $\#\alpha \leq 3^Q \left(\frac{R}{\rho}\right)^Q $ such that
    \[
    B(0,R) \se \bigcup_{a \in \alpha}B(a,\rho).
    \]
   Consequently
    \[
    N(B(0,R),\rho) \leq 3^Q \left(\frac{R}{\rho}\right)^Q.
    \]
\end{lem}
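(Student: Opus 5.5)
The plan is the standard maximal-packing argument, using only left-invariance and homogeneity of $d$ together with the scaling of $\leb^n$. I will choose $\alpha\se B(0,R)$ to be a maximal $\rho$-separated subset of $B(0,R)$, that is, a set with $d(a,a')\geq\rho$ for all distinct $a,a'\in\alpha$. Such a set exists by Zorn's lemma. Finiteness will follow from the volume bound below, so one may equally take a separated set of maximal cardinality.

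\emph{Covering.} Maximality gives the covering directly. If some $x\in B(0,R)$ satisfied $d(x,a)\geq\rho$ for every $a\in\alpha$, then $\alpha\cup\{x\}$ would still be $\rho$-separated and contained in $B(0,R)$, which is a contradiction. Hence $B(0,R)\se\bigcup_{a\in\alpha}B(a,\rho)$.

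\emph{Cardinality.} The balls $B(a,\rho/2)$, $a\in\alpha$, are pairwise disjoint by the triangle inequality. Each of them is contained in $B(0,R+\rho/2)\se B(0,\tfrac32 R)$, since $\rho<R$. Left-invariance gives $B(p,s)=p\cdot B(0,s)$. Homogeneity of $d$ and the fact that the $\delta_\lambda$ are group isomorphisms give $B(0,s)=\delta_s B(0,1)$. Using left-invariance and $Q$-homogeneity of $\leb^n$, we get $\leb^n(B(p,s))=s^Q\leb^n(B(0,1))$, and $0<\leb^n(B(0,1))<+\infty$ because the ball is open and bounded. Comparing volumes then yields
\[
\#\alpha\,\left(\tfrac{\rho}{2}\right)^Q\leq\left(\tfrac{3R}{2}\right)^Q,
\]
that is, $\#\alpha\leq 3^Q(R/\rho)^Q$. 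The bound on $N(B(0,R),\rho)$ follows immediately.

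The only step needing care is the claim that $B(0,1)$ is open and bounded in the Euclidean topology, so that its Lebesgue measure is positive and finite. I will take this as standard: any homogeneous distance induces the manifold topology on $\G$ and has relatively compact balls.
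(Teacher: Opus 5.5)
Your proposal is correct and follows essentially the same maximal $\rho$-separated packing argument as the paper: you get the covering from maximality, and the cardinality bound from comparing the disjoint balls $B(a,\rho/2)$ with $B(0,R+\rho/2)$ via left-invariance and $Q$-homogeneity of $\leb^n$. The differences are cosmetic. You bound $R+\rho/2$ by $\tfrac32 R$ before comparing volumes, whereas the paper estimates $(2R/\rho+1)^Q<3^Q(R/\rho)^Q$; and you justify existence and finiteness of the maximal set via Zorn's lemma plus the volume bound, rather than by compactness of closed balls.
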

\begin{proof}
    Let $\alpha \subset B(0,R)$ be a maximal set\footnote{The existence and finiteness of such a set are guaranteed by the compactness of closed balls in Carnot groups.} such that $d(a,b) \geq \rho$ for every $a,b \in \alpha$, $a \neq b$. The latter implies 
    \[
    B(0,R) \se \bigcup_{a \in \alpha}B(a,\rho).
    \]
    For every $a,b \in \alpha$, $a \neq b$, we have
    \[
    B(a,\rho/2) \cap B(b,\rho/2)=\emptyset,
    \]
    and, since $a \in B(0,R)$, we also have by the triangle inequality
    \[
    B(a,\rho/2) \se B(0,R+\rho/2).
    \]
    By using left invariance and homogeneity of the Lebesgue measure we obtain
    \[
    (\# \alpha) \leb^n(B(0,1))(\rho/2)^Q= (\# \alpha) \leb^n(B(0,\rho/2)) \leq \leb^n(B(0,R+\rho/2))=\leb^n(B(0,1))(R+\rho/2)^Q,
    \]
    hence, since $R>\rho$,
    \[
    \#\alpha \leq \left( \frac{R+\rho/2}{\rho/2}\right)^Q=(2R/\rho+1)^Q < 3^Q \left( \frac{R}{\rho}\right)^Q.
    \]
    
\end{proof}
Now we prove a sub-Riemannian version of Pierce's Lemma, Theorem \ref{teo_pierce} below. 

\begin{teo}\label{teo_pierce}
    Let $\mu$ be a probability measure on $\G$ with finite $(r+\delta)$-moment. Then there exists a constant $c=c(r,\delta,Q)>0$ such that, for every $N \geq 2$, 
\[
V_{N,r}(\mu) \leq c N^{-\frac{r}{Q}}\left(1+\int_\G d(x,0)^{r+\delta}d\mu(x)\right).
\]
\end{teo}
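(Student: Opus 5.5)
I will follow the classical dyadic-shell proof of Pierce's Lemma (cf.\ \cite[Lemma~6.6]{gl00}), using Lemma \ref{lem_covering} as the only geometric input; only volume scaling of balls, with exponent $Q$, enters. Set $M\ceq\int_\G d(x,0)^{r+\delta}\,d\mu(x)$. The quantization points are split between a bounded central region and a family of dyadic annuli, with fewer points in far annuli. The mass there is controlled by Markov's inequality, $\mu(\G\setminus B(0,R))\le M R^{-(r+\delta)}$.

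\textbf{Construction.} Fix $N\ge 2$. Let $A_0\ceq B(0,1)$ and $A_k\ceq B(0,2^k)\setminus B(0,2^{k-1})$ for $k\ge1$. For each $k$ with $0\le k\le K$ I assign $N_k$ points and cover $B(0,2^k)\supseteq A_k$ by balls of radius $\rho_k$, via Lemma \ref{lem_covering}. Here $N_k\ge 3^Q(2^k/\rho_k)^Q$, so $\rho_k\simeq 3\cdot 2^k N_k^{-1/Q}$. The origin is also added to the set of centers. Points of $A_k$ with $k\le K$ then contribute at most $\rho_k^r\mu(A_k)$, and points outside $B(0,2^K)$ contribute at most $\int_{d(x,0)\ge 2^K}d(x,0)^r\,d\mu\le 2^{-K\delta}M$. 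I take
\[
N_k \approx c_0 N\, 2^{-k\eta}\qquad\text{with } 0<\eta<\tfrac{\delta Q}{r},
\]
and $c_0$ chosen so that $\sum_k N_k\le N-1$. I choose $K$ maximal with $N_K\ge 3^Q$ (up to constants, $2^{K\eta}\simeq N$); the case of small $N$ is absorbed in the constant, using the trivial bound $V_{N,r}(\mu)\le V_{1,r}(\mu)\le \int d(x,0)^r\,d\mu\le 1+M$.

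\textbf{Estimate.} The annular part gives
\[
\sum_{k\le K}\rho_k^r\mu(A_k)\lesssim N^{-r/Q}\Bigl(\mu(A_0)+\sum_{k\ge1}2^{kr}2^{k\eta r/Q}\mu(A_k)\Bigr).
\]
On $A_k$ we have $d(x,0)\ge 2^{k-1}$. Since $\eta r/Q<\delta$, we get $2^{k(r+\eta r/Q)}\le 2^{r+\delta}\,d(x,0)^{r+\delta}$ on $A_k$, so the sum is at most $c(1+M)$. The tail term is $2^{-K\delta}M\simeq N^{-\delta/\eta}M$, and this is $\le N^{-r/Q}M$ precisely because $\eta<\delta Q/r$. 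Fixing $\eta\ceq \delta Q/(2r)$ then gives constants depending only on $r,\delta,Q$.

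\textbf{Main obstacle.} The delicate point is balancing the exponents: $\eta$ must decay fast enough for the number of points to be summable, yet slowly enough for the tail beyond $2^K$ to be $o(N^{-r/Q})$ relative to $M$. A second issue is handling the integer parts and the requirement $R>\rho$ in Lemma \ref{lem_covering} uniformly in $N$. For these I would simply use $N_k\ceq[c_0N2^{-k\eta}]$ and check that $K\ge0$ for $N$ large, treating small $N$ by the trivial bound above.
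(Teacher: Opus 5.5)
Your argument is correct and is essentially the paper's proof: you cover dyadic shells via Lemma \ref{lem_covering} at scale comparable to $2^kN_k^{-1/Q}$, and absorb the factor $2^{k(r+\delta)}$ into the $(r+\delta)$-moment. The only difference is bookkeeping: the paper prescribes radii $\rho_k=\min(aN^{-1/Q}2^{k(1+\delta/r)},2^{k+1})$, which is your allocation at the critical exponent $\eta=\delta Q/r$, and the cap $2^{k+1}$ (where only the origin is used) replaces your truncation at $K$, your separate Markov tail and your small-$N$ case. So the strict inequality $\eta<\delta Q/r$ you flag as the main obstacle is not needed: any $\eta>0$ gives summability, and $\eta=\delta Q/r$ already makes the tail $O(N^{-r/Q})$.
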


\begin{proof}
For $\Z \ni k \geq -1$ define
\[
S_k \ceq \begin{cases}
    B(0,1) \quad \text{if } k=-1,\\
    B(0,2^{k+1})\setminus B(0,2^k) \quad \text{if } k \geq 0.
\end{cases}
\]
Let $a=a(Q,r,\delta)$ be a positive constant defined as
\[
a \ceq 3 \left(  2+\frac{2^{Q+1}}{1-2^{-Q\delta/r}}  \right)^\frac{1}{Q}
\]
and define, for every  $N \in \N$ and for every $k \geq -1$,
\[
\rho_k=\rho_k(N) \ceq \begin{cases}
    \min(aN^{-1/Q},1) \quad \text{if } k=-1,\\
    \min (aN^{-1/Q}2^{k(1+\delta/r)},2^{k+1}) \quad \text{if } k \geq 0.
\end{cases}
\]
\begin{center}
    \begin{minipage}{0.98\linewidth}
    \centering
    \begin{tikzpicture}[x=1cm,y=1cm, line cap=round, line join=round, every node/.style={font=\footnotesize}]
    \tikzset{
    shellone/.style={draw=black!70, fill=black!4, line width=0.45pt},
    shelltwo/.style={draw=black!70, fill=black!9, line width=0.45pt},
    shellthree/.style={draw=black!70, fill=black!15, line width=0.45pt},
    coverball/.style={draw=black!80, fill=white, line width=0.35pt},
    point/.style={circle, fill=black, inner sep=1.2pt},
    axis/.style={->, draw=black!55, line width=0.35pt},
    title/.style={anchor=west, font=\small, text=black!85},
    note/.style={text=black!75, font=\scriptsize}
    }
    \begin{scope}[shift={(0,0)}]
    \node[title] at (-1.75,1.9) {(a) Dyadic shells};
    \fill[shellthree, even odd rule] (0,-0.1) circle (1.62) (0,-0.1) circle (1.08);
    \fill[shelltwo, even odd rule] (0,-0.1) circle (1.08) (0,-0.1) circle (0.56);
    \fill[shellone] (0,-0.1) circle (0.56);
    \draw[black!55, line width=0.45pt] (0,-0.1) circle (0.56);
    \draw[black!55, line width=0.45pt] (0,-0.1) circle (1.08);
    \draw[black!55, line width=0.45pt] (0,-0.1) circle (1.62);
    \node[point] at (0,0) {};
    \node[note, anchor=north west] at (0.06,-0.04) {$0$};
    \foreach \x/\y/\r in {0.38/0.2/0.17,-0.78/0.42/0.22,0.72/0.72/0.22,0.22/-0.96/0.22,1.12/-0.7/0.27,-0.28/1.22/0.27}{
        \draw[coverball] (\x,\y) circle (\r);
        \node[point] at (\x,\y) {};
    }
    \draw[axis] (0,-0.1) -- (1.62,-0.1) node[midway, below=2pt, note] {$2^{k+1}$};
    \draw[axis] (0,-0.1) -- (1.08,0.02) node[midway, above=1pt, note] {$2^k$};
    \node[note, fill=white, inner sep=1pt] at (0.15,0.47) {$S_{-1}$};
    \node[note, fill=white, inner sep=1pt] at (0.7,1.0) {$S_k$};
    \node[note, fill=white, inner sep=1pt] at (-0.95,1.25) {$S_{k+1}$};
    \end{scope}
    \begin{scope}[shift={(4.5,0)}]
    \node[title] at (-1.75,1.9) {(b) Scale \(\rho_k\)};
    \draw[axis] (-1.45,-0.55) -- (1.35,-0.55);
    \node[note, anchor=west] at (1.15,-0.34) {$d(x,0)$};
    \foreach \x/\lab in {-0.55/$2^k$, 1.05/$2^{k+1}$}{
        \draw[black!50] (\x,-0.65) -- (\x,-0.45);
        \node[anchor=north, note] at (\x,-0.7) {\lab};
    }
    \fill[black!8] (-0.55,-0.15) rectangle (1.05,0.52);
    \draw[black!75, line width=0.55pt] (-0.55,-0.15) rectangle (1.05,0.52);
    \node[note] at (0.25,0.18) {$S_k$};
    \foreach \x/\y in {-0.28/0.18,0.32/0.32,0.86/0.03}{
        \draw[coverball] (\x,\y) circle (0.22);
        \node[point] at (\x,\y) {};
    }
    \draw[<->, blue!65!black, line width=0.55pt] (-0.28,1.05) -- (0.32,1.05);
    \node[blue!65!black, anchor=south] at (0.02,1.08) {$\rho_k$};
    \draw[blue!65!black, dashed, line width=0.4pt] (-0.28,1.0) -- (-0.28,0.4);
    \draw[blue!65!black, dashed, line width=0.4pt] (0.32,1.0) -- (0.32,0.5);
    \end{scope}
    \end{tikzpicture}

    {\small \textbf{Figure.} Schematic construction in Pierce's lemma: dyadic shells \(S_k\) are covered at scale \(\rho_k\).}
    \end{minipage}
\end{center}
For every $k \geq - 1$ we define $\alpha_k \subset B(0,2^{k+1})$ as follows:
\begin{itemize}
    \item[(i)] If $\rho_k<2^{k+1}$, by Lemma \ref{lem_covering} we can find a set $\alpha_k$ such that
    \[
    \#\alpha_k \leq 3^Q \left( \frac{2^{k+1}}{\rho_k}  \right)^Q, \quad \min_{a \in \alpha_k} d(x,a) <\rho_k \quad\text{ for every } x \in B(0,2^{k+1}).
    \]
    \item[(ii)] If $\rho_k=2^{k+1}$ we set $\alpha_k \ceq \{0\}$ and we observe that $d(x,0)\leq 2^{k+1}=\rho_k$ for every $x \in B(0,2^{k+1})$.
\end{itemize}
Finally we define
\[
\alpha \ceq \bigcup_{k \geq -1}\alpha_k,
\]
and we observe that for every $x \in S_k \se B(0,2^{k+1})$ we have
\[
\min_{a \in \alpha}d(x,a) \leq \min_{a \in \alpha_k} d(x,a) \leq \rho_k.
\]
For all sufficiently large $k$, one has $\rho_k=2^{k+1}$, hence $\alpha_k=\{0\}$; in particular, $\alpha$ is finite.
Moreover,
\[
\#\alpha  \leq 1+ 3^Q \left(\frac{1}{aN^{-1/Q}}\right)^Q +\sum_{k \geq 0} 3^Q \left( \frac{2^{k+1}}{aN^{-1/Q}2^{k(1+\delta/r)}}   \right)^Q \leq 1+N3^Qa^{-Q}\left(1+\frac{2^Q}{1-2^{-Q\delta/r}}\right).
\]
Thanks to our choice of $a$ we obtain
\[
\#\alpha \leq 1+\frac{N}{2},
\]
hence $\#\alpha \leq N$ provided that $N\geq 2$. Therefore we obtain
\begin{equation}\label{eq_aaaaa}
    V_{N,r}(\mu)\leq \int_\G  \min_{a \in \alpha}d(x,a)^rd\mu(x) \leq \sum_{k \geq -1}\rho_k^r \mu(S_k).
\end{equation}
By the definition of $\rho_k$ we have
\[
\rho^r_{-1} \leq a^r N^{-\frac{r}{Q}}
\]
and, for $k \geq 0$,
\[
\rho^r_k \leq a^r N^{-\frac{r}{Q}}2^{k(r+\delta)}.
\]
Combining the latter with \eqref{eq_aaaaa} we obtain
\begin{equation}\label{eq_bbbb}
V_{N,r}(\mu) \leq a^r N^{-\frac{r}{Q}}\left(\mu(S_{-1})+\sum_{k \geq 0}2^{k(r+\delta)}\mu(S_k)\right).  
\end{equation}
We recall that for every $x \in S_k$ we have $d(x,0) \geq 2^k$, hence by elevating to the power $r+\delta$ and integrating on $S_k$ we obtain
\[
2^{k(r+\delta)}\mu(S_k) \leq \int_{S_k}d(x,0)^{r+\delta}d\mu(x).
\]
Combining the latter with \eqref{eq_bbbb} we obtain
\[
V_{N,r}(\mu) \leq c N^{-r/Q}\left(1+\int_\G d(x,0)^{r+\delta}d\mu(x)\right),
\]
where $c\ceq a^r$. This concludes the proof.
\end{proof}

We observe in passing that one can reason in a similar way as in the construction of $\alpha$ in the proof of Theorem \ref{teo_pierce} above to obtain the following result.
\begin{cor}\label{cor_net}
Let $K\subset \G$ be a non-empty compact set. Then there exists a constant $A=A(K)>0$ such that
for every $N\in\N$ there exists a finite set
\[
\Lambda_N\subset K
\]
satisfying
\[
\#\Lambda_N\leq N
\qquad \text{and} \qquad
\sup_{x\in K} d(x,\Lambda_N)\le AN^{-\frac{1}{Q}}.
\]
\end{cor}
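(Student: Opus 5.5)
The idea is to enclose $K$ in a ball around the origin and cover that ball with Lemma \ref{lem_covering}, taking the covering radius proportional to $N^{-1/Q}$. Two points need care. The covering centers produced by the lemma lie in the ball, not necessarily in $K$, so they must be pushed into $K$. And the bound must hold for every $N\in\N$, including small $N$ where the lemma does not apply.

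Since $K$ is compact, there is $R>0$ with $K\subset B(0,R)$. Put $D\ceq \operatorname{diam}(K)+R$ (finite) and fix a constant $A\geq 2\cdot 3\,D$, to be enlarged if needed. Set $\rho\ceq \tfrac{A}{2}N^{-1/Q}$.

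First consider the case $\rho\geq R$. Take $\Lambda_N\ceq\{x_0\}$ for any $x_0\in K$. Then $\#\Lambda_N=1\le N$, and for every $x\in K$ we have $d(x,x_0)\le \operatorname{diam}(K)\le D\le AN^{-1/Q}$; the last inequality holds because $AN^{-1/Q}=2\rho\geq 2R$, and enlarging $A$ ensures $2\rho\ge D$ in this regime. This also takes care of any small $N$.

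Now the case $\rho<R$. Lemma \ref{lem_covering} gives $\alpha\subset B(0,R)$ with $\#\alpha\le 3^Q(R/\rho)^Q=3^Q(2R/A)^QN$ and $B(0,R)\subset\bigcup_{a\in\alpha}B(a,\rho)$. Since $A\ge 6R$, this yields $\#\alpha\le N$. To get points inside $K$, discard every $a\in\alpha$ with $B(a,\rho)\cap K=\emptyset$, and for each remaining $a$ choose $x_a\in B(a,\rho)\cap K$. Let $\Lambda_N\ceq\{x_a\}$, so $\#\Lambda_N\le\#\alpha\le N$. Given $x\in K$, pick $a$ with $x\in B(a,\rho)$; this $a$ was not discarded, and the triangle inequality gives $d(x,x_a)<2\rho=AN^{-1/Q}$.

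The main obstacle is only this bookkeeping: keeping the points in $K$ costs a factor $2$ in the radius, and the constant must be chosen so that the cardinality bound $\#\Lambda_N\le N$ holds uniformly in $N$. The choice $A\ceq\max\bigl(6R,\,6D\bigr)$, which depends only on $K$, settles both issues.
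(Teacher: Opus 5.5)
Your proof is correct and is essentially the paper's argument: enclose $K$ in $B(0,R)$, cover with Lemma \ref{lem_covering} at radius $\rho\sim N^{-1/Q}$, move the surviving centers into $K$ at the cost of a factor $2$, and handle the regime of large $\rho$ with a single point; with $A=6R$ your $\rho=\tfrac{A}{2}N^{-1/Q}$ is exactly the paper's $\rho_N=3RN^{-1/Q}$. One small slip is in the case $\rho\ge R$: the middle inequality $D\le AN^{-1/Q}$ can fail, and enlarging $A$ does not force it because that regime only guarantees $2\rho\ge 2R$. It is not needed, since $\operatorname{diam}(K)\le 2R\le 2\rho=AN^{-1/Q}$ directly, so you can drop $D$ and take $A=6R$.
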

\begin{proof}
    Since $K$ is compact, there exists $R>0$ such that $K \subset B(0,R)$. By Lemma \ref{lem_covering}, for every $R>\rho>0$ we have
    \[
 N(K,\rho) \leq N(B(0,R),\rho) \leq (3R)^Q\rho^{-Q},
    \]
    hence we can find a set $\tilde\Gamma_\rho \se \G$ such that
    \[
    \#\tilde\Gamma_\rho \leq (3R)^Q \rho^{-Q} \qquad \text{and} \qquad \sup_{x \in K}d(x,\tilde\Gamma_\rho) \leq \rho.
    \]
  Discarding, if necessary, all centers $\tilde x\in\tilde\Gamma_\rho$ such that $K\cap B(\tilde x,\rho)=\emptyset$, the remaining family still covers $K$ and has no larger cardinality. For every remaining $\tilde x \in \tilde \Gamma_\rho$ we choose $x(\tilde x) \in K \cap B(\tilde x,\rho)$ and we define $\Gamma_\rho \ceq \{x(\tilde x): \tilde x \in \tilde\Gamma_\rho\}$. We have that $\Gamma_\rho \se K$,
     \[
    \#\Gamma_\rho \leq (3R)^Q \rho^{-Q} \qquad \text{and} \qquad \sup_{x \in K}d(x,\Gamma_\rho) \leq 2\rho.
    \]
    For every $N \in \N$ we define 
    \[
    \rho_N \ceq 3RN^{-\frac{1}{Q}}
    \]
    If $\rho_N>R$, then choosing any point $\lambda_0 \in K$ and setting $\Lambda_N \ceq \{ \lambda_0\}$ gives
\begin{equation}\label{eq_case1}
    \#\Lambda_N =1 \leq N \qquad\text{and}\qquad \sup_{x \in K}d(x,\Lambda_N) \leq \operatorname{diam}(K) \leq 2R<2\rho_N=6RN^{-\frac{1}{Q}}.
    \end{equation}
    If $\rho_N \leq R$ we set $\Lambda_N \ceq \Gamma_{\rho_N}$. Then   \begin{equation}\label{eq_case2}
 \#\Lambda_N=\#\Gamma_{\rho_N} \leq (3R)^Q \rho^{-Q}_N \leq N \qquad \text{ and }\qquad \sup_{x \in K}d(x,\Lambda_N) =\sup_{x \in K}d(x,\Gamma_{\rho_N})\leq 2\rho_N=6RN^{-\frac{1}{Q}}      
    \end{equation}
Defining $A(K) \ceq 6R $ and combining \eqref{eq_case1} and \eqref{eq_case2} is enough to conclude.
\end{proof}

\section{Asymptotics of quantization of measures in Carnot groups}
\label{sec_qom}

The aim of this section is to prove the two main results stated in the introduction: for the reader's convenience, we outline here the structure of the rest of this section.
\begin{enumerate}
    \item In Lemma \ref{lem_step1} we prove Theorem \ref{teo_intro} in the base case, that is, when $\mu$ is the uniform measure on the Euclidean cube $C$. We refer the reader to the beginning of Subsection \ref{subsec_zad1} for further details on the ideas behind the strategy of the proof.
    \item In Lemma \ref{lem_step2} we prove Theorem \ref{teo_intro} in the specific case when $\mu$ is a convex linear combination of  uniform measures supported on disjoint copies of left translations and/or homogeneous dilations of the Euclidean cube $C$.
    \item In Lemma \ref{lem_step3} (resp. Lemma \ref{lem_step4}) we prove Theorem \ref{teo_intro} when $\mu$ is a compactly supported measure absolutely continuous (resp. mutually singular) with respect to $\leb^n$. Lemma \ref{lem_step3} is proved by approximating $\mu$ with a sequence of probability measures $\mu_k$, each of them being supported on copies of left translations and/or homogeneous dilations of the Euclidean cube $C$, allowing us to use Lemma \ref{lem_step2}. Lemma \ref{lem_step4} is proved by choosing an appropriate tessellation of the support of $\mu$.
    \item Lemma \ref{lem_step3} and Lemma \ref{lem_step4} allow us to prove Theorem \ref{teo_intro} when $\mu$ is a compactly supported probability measure. If $\mu$ is not compactly supported, then it is always true (Lemma \ref{lem_stepaux}) that
    \[
    \liminf_{N \to +\infty}N^{\frac{r}{Q}}V_{N,r}(\mu) \geq Q_r(C) \nl h \nr_{L^{\frac{Q}{Q+r}}},
    \]
    but, in order to obtain the analogous inequality for the $\limsup$, we need a stronger assumption on the finiteness of moments which allows us to use our sub-Riemannian version of Pierce's Lemma (Theorem \ref{teo_intropierce}).
    Finally we prove Theorem \ref{teo_intro} for every probability measure $\mu$ with finite $(r+\delta)$-moment by taking a sequence of increasingly homogeneous dilations of the Euclidean cube $C$, $(\delta_\lambda C)_{\lambda \in \N}$, and by decomposing $\mu$ as the sum of a measure supported on the cube and on the complement of the cube. By Corollary \ref{cor_compact_zador} we know how to estimate the quantization error on the compactly supported measure and by Theorem \ref{teo_intropierce} we can obtain estimates for the quantization error on the non-compactly supported measure.
    \item We conclude the section by proving Theorem \ref{teo_empirical_centers}, using the localized lower-bound argument and the equality case in the allocation inequality.
 \end{enumerate}

\subsection{Zador's Theorem for the Euclidean cube in Carnot groups} \label{subsec_zad1}

The aim of this subsection is to prove Theorem \ref{teo_intro} for the probability measure $U(C)$. Our strategy is the following. First, in Lemma \ref{lem_pav} we approximate the Euclidean cube $C$ by using a sequence of sets $(C_\lambda)_{\lambda>0}$ that are finite disjoint unions of suitably left-translated and homogeneously dilated copies of $C$. Then, in Lemma \ref{lem_step1} we prove Zador's Theorem for the Euclidean cube by estimating the quantization error of $U(C)$ with the ones of $U(C_\lambda)$ and $U(C \setminus C_\lambda)$. Moreover, thanks to the fact that moments are minimized by balls (Lemma \ref{lem_momenti}), we are able to prove in Lemma \ref{lem_qpositivo} that the asymptotic constant $Q_r(C)$ is strictly positive.
\begin{lem}\label{lem_pav}
For every $\lambda>0$ define
\[
P_\lambda \ceq  \{ \delta_\lambda(p \cdot C): p \in \Z^{n} \} ,\qquad T_\lambda\ceq \{E \in P_\lambda: E \subset \operatorname{int}C\},
\]
and 
\[
C_\lambda \ceq \bigsqcup_{A \in T_\lambda}A.
\]
Then $\leb^n(C_\lambda)=\lambda^Q(\#T_\lambda)$ and
\[
\lim_{\lambda \to 0}\leb^n(C \setminus C_\lambda)=0.
\]
\end{lem}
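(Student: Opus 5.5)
The measure identity follows from disjointness and the scaling laws of $\leb^n$; the limit will follow from a boundary-layer estimate.

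\emph{Measure identity.} The sets in $T_\lambda$ are pairwise disjoint by Lemma \ref{lem_pavage}, so $\leb^n(C_\lambda)=\sum_{A\in T_\lambda}\leb^n(A)$. Each $A=\delta_\lambda(p\cdot C)$ has $\leb^n(A)=\lambda^Q\leb^n(p\cdot C)=\lambda^Q\leb^n(C)=\lambda^Q$, by left invariance, $Q$-homogeneity of $\leb^n$, and $\leb^n(C)=1$. Hence $\leb^n(C_\lambda)=\lambda^Q\#T_\lambda$. We also need $T_\lambda$ finite, which is clear from this identity since $C_\lambda\subset C$.

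\emph{The limit.} I would show that every point of $C\setminus C_\lambda$ lies close to $\partial C$, with closeness measured by a quantity that tends to $0$ as $\lambda\to0$ for $\lambda\le 1$.

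Let $x\in C\setminus C_\lambda$. By Lemma \ref{lem_pavage}, $x$ lies in a unique tile $E=\delta_\lambda(p\cdot C)\in P_\lambda$, and $E\not\subset\operatorname{int}C$. The tile $E$ is contained in a homogeneous ball of radius $\lambda R_0$ around $\delta_\lambda p$. Here $R_0\ceq\sup_{y,z\in \overline C}d(y,z)<\infty$: indeed $E=\delta_\lambda p\cdot\delta_\lambda C$, so $\operatorname{diam}_d E=\lambda\operatorname{diam}_d C$ by left invariance and homogeneity of $d$. So $E$ contains $x\in C$ and also a point outside $\operatorname{int}C$. The tile is connected, being the image of the connected set $C$ under homeomorphisms, and its $d$-diameter is at most $\lambda R_0$. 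Therefore $E$ meets $\partial C$, and $d(x,\partial C)\le\lambda R_0$.

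This gives
\[
C\setminus C_\lambda\subset \{x\in \overline C: d(x,\partial C)\le \lambda R_0\}\eqqcolon K_\lambda .
\]
The sets $K_\lambda$ are compact and decrease as $\lambda\downarrow0$, and their intersection is $\partial C$, since $d$ induces the Euclidean topology (a standard fact for homogeneous distances) and $\partial C$ is closed. As $\leb^n(\partial C)=0$, continuity from above of $\leb^n$ on the bounded set $\overline C$ gives $\leb^n(K_\lambda)\to0$, hence $\leb^n(C\setminus C_\lambda)\to0$.

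\emph{Main obstacle.} The delicate point is passing from the homogeneous metric to Euclidean topology. The argument uses that $d$ is continuous with respect to the Euclidean topology and induces it, so that boundedness and connectedness statements transfer and the intersection of the $K_\lambda$ is exactly $\partial C$. If one prefers to avoid this, an alternative is a local Hölder-type comparison of $d$ with the Euclidean distance on compact sets: $|x-y|\le c\,d(x,y)$ for $x,y$ in a fixed compact set. With this, each tile meeting $C$ for $\lambda\le1$ has Euclidean diameter $O(\lambda)$. The bad set then lies in a Euclidean $O(\lambda)$-neighbourhood of $\partial C$, whose measure is $O(\lambda)$ by an explicit computation for the cube.
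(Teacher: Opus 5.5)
Your proof is correct and follows essentially the same route as the paper. The measure identity comes from disjointness plus left-invariance and $Q$-homogeneity of $\leb^n$. For the limit, the bound $\operatorname{diam}_d(\delta_\lambda(p\cdot C))\le\lambda\operatorname{diam}_d(C)$ traps $C\setminus C_\lambda$ in a shrinking boundary layer with Lebesgue-null intersection, and continuity from above finishes. The only difference is cosmetic: the paper does not need your connectedness argument, because it treats $C\setminus\operatorname{int}C$ separately and, for $x\in\operatorname{int}C$, bounds $d(x,\G\setminus\operatorname{int}C)\le d(x,y)\le\lambda D$ directly using a point $y$ of the tile lying outside $\operatorname{int}C$.
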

\begin{proof}
Let $\lambda>0$. The fact that $\leb^n(C_\lambda)=\lambda^Q(\#T_\lambda)$ immediately follows by the fact that $C_\lambda$ is a disjoint union and from the left-invariance and homogeneity of the Lebesgue measure. Let $D \ceq \operatorname{diam}(C)<+\infty$. By left-invariance and homogeneity of the distance, we also have $\operatorname{diam}( \delta_\lambda(p \cdot C))\leq \lambda D$. Let $x \in \operatorname{int(C)}$: then $x \in \delta_\lambda(\bar p \cdot C)$ for some\footnote{The existence and the uniqueness of such $\bar p$ is guaranteed by Lemma \ref{lem_pavage}.} $\bar p \in \Z^n$. If $\delta_\lambda(\bar p \cdot C) \not\subset \operatorname{int}(C)$ then there exists $y \in \delta_\lambda(\bar p \cdot C) \setminus \operatorname{int}(C)$ and
\[
d(x,\G \setminus \operatorname{int}(C)) \leq d(x,y) \leq \lambda D.
\]
Consequently,
\[
C \setminus C_\lambda \se (C \setminus \operatorname{int}(C))\cup \{x \in \operatorname{int}(C):d(x,\G \setminus \operatorname{int}(C)) \leq \lambda D\}.
\]
The set $C \setminus \operatorname{int}(C)$ has zero Lebesgue measure and the set $\{x \in \operatorname{int}(C):d(x,\G \setminus \operatorname{int}(C)) \leq \lambda D\}$ becomes the empty set when $\lambda \to 0$, more precisely, these sets decrease to the empty set as $\lambda\to 0$, so one may use continuity from above, proving that 
\[
\lim_{\lambda \to 0}\leb^n(C \setminus C_\lambda)=0,
\]
concluding the proof.
\end{proof}
\begin{lem} \label{lem_step1}
    We have that
    \[
    \lim_{N \to +\infty} N^\frac{r}{Q}V_{N,r}(U(C))=Q_r(C).
    \]
\end{lem}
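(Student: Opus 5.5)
Since $\leb^n(C)=1$, we have $V_{N,r}(U(C))=M_{N,r}(C)$, so $Q_r(C)=\inf_N N^{r/Q}V_{N,r}(U(C))$ and the liminf inequality is automatic:
\[
\liminf_{N\to+\infty}N^{\frac rQ}V_{N,r}(U(C))\geq Q_r(C).
\]
It remains to prove $\limsup_N N^{r/Q}V_{N,r}(U(C))\leq Q_r(C)$. Fix $\ve\in(0,1)$ and choose $M\in\N$ with $M^{r/Q}V_{M,r}(U(C))\leq Q_r(C)+\ve$. Then fix $\lambda>0$, to be sent to $0$ at the end, and use Lemma \ref{lem_pav}. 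The set $C_\lambda$ is the disjoint union of $k_\lambda\ceq\#T_\lambda$ cells $\delta_\lambda(p\cdot C)$, each of Lebesgue measure $\lambda^Q$, and $\leb^n(C\setminus C_\lambda)\to0$.

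Split
\[
U(C)=s_\lambda U(C_\lambda)+(1-s_\lambda)U(C\setminus C_\lambda),\qquad s_\lambda=\lambda^Qk_\lambda ,
\]
with the second term absent if $s_\lambda=1$. By Lemma \ref{lem_basicpro}(iv.B) applied to $U(C_\lambda)=\sum_{A\in T_\lambda}k_\lambda^{-1}U(A)$, allocating $m$ centers to each cell gives
\[
V_{k_\lambda m,r}(U(C_\lambda))\leq V_{m,r}(U(\delta_\lambda C))=\lambda^rV_{m,r}(U(C)).
\]
Here we used (i) of the same lemma, which gives invariance under left translation and the factor $\lambda^r$ under dilation. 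For the remainder we use a crude bound. $U(C\setminus C_\lambda)$ is supported in the compact set $\overline C$, so Corollary \ref{cor_net} gives
\[
V_{L,r}(U(C\setminus C_\lambda))\leq A^rL^{-r/Q}
\]
with $A=A(\overline C)$ independent of $\lambda$. Equivalently, one can use Theorem \ref{teo_pierce} with a moment bound uniform in $\lambda$.

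Now let $N$ be large. Write $N_1=[(1-\eta)N]$ and $N_2=N-N_1$ with a small $\eta>0$, and set $m=[N_1/k_\lambda]$. By (iv.B),
\[
V_{N,r}(U(C))\leq s_\lambda\lambda^rV_{m,r}(U(C))+(1-s_\lambda)A^rN_2^{-r/Q}.
\]
The step that needs care is to compare $V_{m,r}(U(C))$ with the fixed value $V_{M,r}(U(C))$ when $m$ is arbitrary. The standard device is to apply the same construction with $N_1$ replaced by the multiple $k_\lambda M$, and to handle general $N$ by monotonicity of $V_{N,r}$ in $N$.

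Cleaner is a two-level construction. Given $N$, choose the dilation scale $\lambda$ depending on $N$ so that $k_\lambda M\leq(1-\eta)N$ is close to $(1-\eta)N$. This is possible because $k_\lambda\lambda^Q\to1$ and $k_\lambda$ jumps only moderately as $\lambda$ varies. The cleanest way to guarantee it is to restrict to $\lambda=1/j$ and note that $k_{1/j}\sim j^Q$ with ratios $k_{1/(j+1)}/k_{1/j}\to1$. Then
\[
N^{\frac rQ}V_{N,r}(U(C))\leq N^{\frac rQ}\lambda^{Q+r}k_\lambda V_{M,r}(U(C))+(1-s_\lambda)A^r\eta^{-r/Q}.
\]
Since $\lambda^Q\approx(1-\eta)^{-1}M/N$ and $s_\lambda=\lambda^Qk_\lambda\leq1$, the first term is at most about $(1-\eta)^{-r/Q}M^{r/Q}V_{M,r}(U(C))\leq(1-\eta)^{-r/Q}(Q_r(C)+\ve)$. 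As $N\to\infty$ we have $\lambda\to0$, so the second term tends to $0$ because $1-s_\lambda=\leb^n(C\setminus C_\lambda)\to0$. Letting $\eta,\ve\to0$ then yields the limsup bound.

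I expect the main obstacle to be the matching of $N$ to the admissible values $k_\lambda M$. The counts $k_\lambda$ are not explicit, so we need $k_{1/(j+1)}/k_{1/j}\to1$. This follows from Lemma \ref{lem_pav}, since $k_{1/j}j^{-Q}=\leb^n(C_{1/j})\to1$.
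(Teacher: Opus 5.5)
Your proposal is correct and follows essentially the same route as the paper: an inner pavage $C_\lambda$ with $M$ centers per cell via Lemma~\ref{lem_basicpro}(iv.B) and scaling, a reserved fraction $N_2\geq\eta N$ of centers on $C\setminus C_\lambda$ controlled by a crude uniform bound (the paper uses Theorem~\ref{teo_pierce}; your use of Corollary~\ref{cor_net} works equally well), and $\lambda$ tied to $N$. The matching step you single out as the main obstacle is actually free: the paper takes the continuous choice $\lambda_N=(M/N_1)^{1/Q}$, and then $M\,k_{\lambda_N}=\leb^n(C_{\lambda_N})\,N_1\leq N_1$ holds automatically, so your restriction to $\lambda=1/j$ and the ratio estimate $k_{1/(j+1)}/k_{1/j}\to1$ are correct but unnecessary.
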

\begin{proof}
  Fix $\ve>0$ and choose $S \in \N$, $0<\sigma<1/2$ such that 
  \[
  S^\frac{r}{Q}V_{S,r}(U(C))<Q_r(C)+\ve, \qquad (1-2\sigma)^{-\frac{r}{Q}}\leq 1+\ve
  \]
 Define, for every $N \in \N$,
  \[
  N_1 \ceq [(1-\sigma)N], \qquad N_2 \ceq N-N_1, \qquad \lambda_N \ceq \left( \frac{S}{N_1} \right)^\frac{1}{Q},
  \]
  where, here and in the following, $[\cdot]$ denotes the integer part function. We observe in passing that $\lambda_N \xrightarrow{N \to +\infty}0$. Let 
  \[
  m_N \ceq \leb^n(C_{\lambda_N}),
  \]
  where $C_{\lambda_N}$ is defined as in Lemma \ref{lem_pav}. By Lemma \ref{lem_pav} we have
  \[
  1 \geq m_N=(\lambda_N)^Q(\#T_{\lambda_N}) \xrightarrow{N \to +\infty}\leb^n(C)=1,
  \]
  where we used the same notation of Lemma \ref{lem_pav}. Therefore, by the definition of $\lambda_N$,
  \[
  S(\#T_{\lambda_N})=S\frac{m_N}{(\lambda_N)^Q}=m_N N_1 \leq N_1.
  \]
  We have
  \[
  U(C)=m_NU(C_{\lambda_N})+(1-m_N)U(C \setminus C_{\lambda_N}).
  \]
  We now choose $S(\#T_{\lambda_N})$ centers on $C_{\lambda_N}$ and $N_2$ centers on $C \setminus C_{\lambda_N}$: by Lemma \ref{lem_basicpro} we infer
  \begin{equation}\label{eq_argh}
  V_{N,r}(U(C)) \leq m_NV_{ S(\#T_{\lambda_N}),r}(U(C_{\lambda_N}))+(1-m_N)V_{N_2,r}(U(C \setminus C_{\lambda_N})).
  \end{equation}
  Since 
  \[
  U(C_{\lambda_N})=\frac{1}{\#T_{\lambda_N}}\sum_{i=1}^{\#T_{\lambda_N}}U(\delta_{\lambda_N}(p_i \cdot C))
  \]
  for some $p_1,\dots,p_{\#T_{\lambda_N}} \in \Z^n$, we obtain by Lemma \ref{lem_basicpro} and by recalling the definition of $\lambda_N$,
  \[
V_{ S(\#T_{\lambda_N}),r}(U(C_{\lambda_N})) \leq \frac{1}{\#T_{\lambda_N}}\sum_{i=1}^{\#T_{\lambda_N}}V_{S,r}(U(\delta_{\lambda_N}(p_i \cdot C)))=\lambda_N^rV_{S,r}(U(C))=\left(\frac{S}{N_1}\right)^\frac{r}{Q}V_{S,r}(U(C)).
  \]
  Combining the latter with \eqref{eq_argh} we obtain, by multiplying both sides of the inequality by $N^\frac{r}{Q}$,
  \begin{equation}\label{eq_argh2}
  N^\frac{r}{Q}V_{N,r}(U(C)) \leq \left(\frac{N}{N_1}\right)^\frac{r}{Q}S^\frac{r}{Q} m_N V_{S,r}(U(C))+N^\frac{r}{Q}(1-m_N)V_{N_2,r}(U(C \setminus C_{\lambda_N})).
  \end{equation}
  We also observe that 
  \begin{align}\label{eq_argh3}
  N^\frac{r}{Q}(1-m_N)V_{N_2,r}(U(C \setminus C_{\lambda_N}))=&\left(\frac{N}{N_2}\right)^\frac{r}{Q} (1-m_N)N_2^\frac{r}{Q}V_{N_2,r}(U(C \setminus C_{\lambda_N})) \\\leq& \sigma^{-\frac{r}{Q}}(1-m_N)N_2^\frac{r}{Q}V_{N_2,r}(U(C \setminus C_{\lambda_N})).\notag
  \end{align}
  Moreover, since $U(C \setminus C_{\lambda_N})$ is supported on the fixed compact set $C$, Pierce's Lemma (Theorem \ref{teo_pierce}), applied with $\delta=1$, gives a constant $M>0$, depending only on $r,Q$, the chosen homogeneous distance, and the cube $C$, such that 
  \[
  N_2^\frac{r}{Q}V_{N_2,r}(U(C \setminus C_{\lambda_N})) \leq M
  \]
  Combining the above inequality with \eqref{eq_argh2} and \eqref{eq_argh3} and observing that, whenever $N \geq 1/\sigma$ we have $N/N_1 \leq (1-2\sigma)^{-1}$, we obtain
  \[
  N^\frac{r}{Q}V_{N,r}(U(C)) \leq (1-2\sigma)^{-\frac{r}{Q}}S^\frac{r}{Q} m_N V_{S,r}(U(C))+\sigma^{-\frac{r}{Q}}(1-m_N)M.
  \]
  Since $m_N \xrightarrow{N \to +\infty}1$ we have
  \[
  \lim_{N \to +\infty}\sigma^{-\frac{r}{Q}}(1-m_N)M=0,
  \]
 and, consequently,
  \[
  \limsup_{N \to +\infty} N^\frac{r}{Q}V_{N,r}(U(C)) \leq  (1-2\sigma)^{-\frac{r}{Q}}S^\frac{r}{Q}  V_{S,r}(U(C)).
  \]
  Recalling the definition of $S$ and $\sigma$ we obtain 
    \[
  \limsup_{N \to +\infty} N^\frac{r}{Q}V_{N,r}(U(C)) \leq  (1+\ve)(Q_r(C)+\ve),
  \]
  and letting $\ve \to 0$ gives the desired upper bound. The reverse inequality follows directly from the definition of $Q_r(C)$, since $N^{r/Q}V_{N,r}(U(C))\geq Q_r(C)$ for every $N$.
\end{proof}

Before proving that $  Q_r(C)>0$, we need an auxiliary lemma which proves that moments are minimized by balls.
\begin{lem}\label{lem_momenti}
Let $V \se \G$ be a bounded Lebesgue measurable set such that $\leb^n(V)>0$. Then for every $x_0 \in \G$ there exists $R \geq 0$ such that $\leb^n(B(x_0,R))=\leb^n(V)$ and
\[
\int_V d(x,x_0)^r dx \geq \int_{B(x_0,R)} d(x,x_0)^r dx.
\]
\end{lem}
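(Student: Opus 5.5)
The plan is the classical bathtub (rearrangement) argument, using only left-invariance and homogeneity of $\leb^n$ and $d$. First I choose the radius. By left-invariance and $Q$-homogeneity of $\leb^n$,
\[
\leb^n(B(x_0,\rho))=\leb^n(B(0,1))\,\rho^Q .
\]
Here $0<\leb^n(B(0,1))<+\infty$, since balls are open, nonempty and bounded. The map $\rho\mapsto \leb^n(B(x_0,\rho))$ is therefore a continuous increasing bijection of $[0,+\infty)$ onto itself. So there is a unique $R>0$ with $\leb^n(B(x_0,R))=\leb^n(V)$, which is finite and positive because $V$ is bounded with positive measure. Write $B\ceq B(x_0,R)$ and $f(x)\ceq d(x,x_0)^r$.

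Next I compare the two integrals by splitting along $B$:
\[
\int_V f\,dx-\int_B f\,dx=\int_{V\setminus B} f\,dx-\int_{B\setminus V} f\,dx .
\]
On $V\setminus B$ we have $d(x,x_0)\geq R$, so $f\geq R^r$ there. On $B\setminus V$ we have $d(x,x_0)<R$, so $f< R^r$ there. The measures of the two pieces agree:
\[
\leb^n(V\setminus B)=\leb^n(V)-\leb^n(V\cap B)=\leb^n(B)-\leb^n(V\cap B)=\leb^n(B\setminus V).
\]
Both are finite because $V$ and $B$ are bounded. Hence
\[
\int_{V\setminus B} f\,dx\geq R^r\,\leb^n(V\setminus B)=R^r\,\leb^n(B\setminus V)\geq \int_{B\setminus V} f\,dx,
\]
which gives the claimed inequality. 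All integrals are finite, since $f$ is continuous and the domains are bounded.

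There is no serious obstacle. The only point needing care is the continuity of $\rho\mapsto\leb^n(B(x_0,\rho))$, and the formula $\leb^n(B(x_0,\rho))=\leb^n(B(0,1))\rho^Q$ settles it. That formula uses $\delta_\rho B(0,1)=B(0,\rho)$, which follows from homogeneity of $d$ together with $\delta_\rho 0=0$.
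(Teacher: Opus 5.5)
Your proof is correct and is the same bathtub argument as the paper. You pick $R$ with $\leb^n(B(x_0,R))=\leb^n(V)$ and compare $\int_{V\setminus B}$ with $\int_{B\setminus V}$, using $d(\cdot,x_0)\geq R$ on the first set and $d(\cdot,x_0)\leq R$ on the second. The one difference is how you get $R$: the paper uses monotonicity, left/right continuity of $\rho\mapsto\leb^n(B(x_0,\rho))$ (citing a result that $\leb^n(\partial B(x_0,R))=0$) and the intermediate value theorem, whereas your identity $\leb^n(B(x_0,\rho))=\leb^n(B(0,1))\rho^Q$ gives continuity and the explicit value $R=(\leb^n(V)/\leb^n(B(0,1)))^{1/Q}$ directly, so that citation is not needed (the paper itself uses this identity in Lemma~\ref{lem_qpositivo}).
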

\begin{proof}
    Since $V$ is bounded, there exists $R_0>0$ such that $V \se B(x_0,R_0)$, with measure $M \geq \leb^n(V)$. We define the function $\phi:[0,R_0]\to [0,M]$ as $\phi(R) \ceq \leb^n(B(x_0,R))$. The function $\phi$ is monotone and left-continuous since $B(x_0,R)=\bigcup_{S \to R^-}B(x_0,S)$. Moreover, since $\bigcap_{S>R}B(x_0,S)=\overline{B(x_0,R)}$ and (see \cite[Proposition 2.4]{fssc03})  $\leb^n(\partial B(x_0,R))=0$ and $\leb^n(B(x_0,R))=\lim_{S \to R^+} \leb^n(B(x_0,S))$, $\phi$ is also right-continuous. Therefore, by the intermediate value theorem there exists $R \in [0,R_0]$ such that $\leb^n(B(x_0,R))=\leb^n(V)$. Equivalently, $\leb^n(V \setminus B(x_0,R))=\leb^n(B(x_0,R) \setminus V)$, and since $d(\cdot,x_0)\geq R$ on the former set and $d(\cdot,x_0) \leq R$ on the latter, one obtains
\begin{align*}
    \int_Vd(y,x_0)^rdy -\int_{B(x_0,R)} d(y,x_0)^rdy &= \int_{V \setminus B(x_0,R)}d(y,x_0)^rdy-\int_{ B(x_0,R)  \setminus V}d(y,x_0)^rdy\\& \geq \int_{V \setminus B(x_0,R)}R^rdy-\int_{ B(x_0,R)  \setminus V}R^rdy\\
    &=R^r(\leb^n(V \setminus B(x_0,R))-\leb^n(B(x_0,R) \setminus V))=0,
\end{align*}
concluding the proof.
\end{proof}

\begin{lem}\label{lem_qpositivo}
    We have
    \[
    Q_r(C)>0.
    \]
\end{lem}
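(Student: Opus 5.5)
The plan is to show that $N^{r/Q}V_{N,r}(U(C))$ is bounded below by a positive constant independent of $N$, using the Voronoi partition and Lemma \ref{lem_momenti}. Fix $N\geq 1$ and any $\alpha\se\G$ with $1\leq \#\alpha\leq N$. Partition $C$ into the Voronoi cells $V_a\ceq\{x\in C: d(x,a)=\min_{b\in\alpha}d(x,b)\}$, breaking ties by an ordering of $\alpha$ so that the cells are disjoint Borel sets. Each $V_a$ is bounded, since $C$ is. Hence
\[
\int_C \min_{a\in\alpha}d(x,a)^r\,dx=\sum_{a\in\alpha}\int_{V_a}d(x,a)^r\,dx .
\]
We apply Lemma \ref{lem_momenti} to every cell with $\leb^n(V_a)>0$, with $x_0=a$. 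This replaces $V_a$ by a ball $B(a,R_a)$ of the same measure. By homogeneity, $\leb^n(B(a,R))=\leb^n(B(0,1))R^Q$, and
\[
\int_{B(a,R)}d(x,a)^r\,dx=\int_0^R r t^{r-1}\big(\leb^n(B(0,R))-\leb^n(B(0,t))\big)\,dt=\kappa\,R^{Q+r},
\]
where $\kappa=\kappa(Q,r,d)>0$ is an explicit multiple of $\leb^n(B(0,1))$. Writing $v_a\ceq\leb^n(V_a)$, we have $R_a^{Q+r}=(v_a/\leb^n(B(0,1)))^{(Q+r)/Q}$. Therefore
\[
\int_C \min_{a\in\alpha}d(x,a)^r\,dx\geq c_0\sum_{a\in\alpha} v_a^{\frac{Q+r}{Q}},\qquad \sum_a v_a=\leb^n(C)=1,
\]
with $c_0>0$ depending only on $Q$, $r$, $d$. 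Cells of measure zero contribute zero on both sides.

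Next we minimize the right-hand side. The map $t\mapsto t^{(Q+r)/Q}$ is convex, so Jensen's inequality (or Hölder) with at most $N$ terms summing to $1$ gives
\[
\sum_a v_a^{\frac{Q+r}{Q}}\geq N\cdot N^{-\frac{Q+r}{Q}}=N^{-\frac{r}{Q}}.
\]
Thus $\int_C\min_{a\in\alpha}d(x,a)^r\,dx\geq c_0N^{-r/Q}$ for every admissible $\alpha$. Taking the infimum gives $V_{N,r}(U(C))\geq c_0N^{-r/Q}$, since $\leb^n(C)=1$, so $U(C)=\uno_C\leb^n$. It follows that $N^{r/Q}V_{N,r}(U(C))\geq c_0$ for all $N$, and hence $Q_r(C)\geq c_0>0$.

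The main care points are measurability of the Voronoi cells with tie-breaking, which is routine because $\alpha$ is finite and $d$ is continuous, and the computation of the ball moment via the layer-cake formula. The latter uses only $\leb^n(B(0,t))=t^Q\leb^n(B(0,1))$ and $0<\leb^n(B(0,1))<\infty$, which hold by homogeneity and because $d$ induces the Euclidean topology.
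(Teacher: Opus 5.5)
Your proof is correct and follows essentially the same route as the paper: the Voronoi decomposition of $C$, replacing each cell by a ball of equal measure via Lemma \ref{lem_momenti}, the explicit ball moment $\leb^n(B(0,1))\frac{Q}{Q+r}R^{Q+r}$ from the layer-cake formula and homogeneity, and Jensen's inequality. The only difference is that you bound the cost of an arbitrary $\alpha$ with $\#\alpha\leq N$ and discard null cells, whereas the paper uses an optimal set of exactly $N$ points; this lets you bypass Lemmas \ref{lem_existence} and \ref{lem_card}, a slight simplification.
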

\begin{proof}
For every $N \in \N$ choose $\alpha_N\in C_{N,r}(U(C))$. Since $\operatorname{spt}(U(C))$ is infinite, Lemma \ref{lem_card} (and the trivial case $N=1$) gives $\#\alpha_N=N$; we write $\alpha_N=\{a_1,\dots,a_N\}$. Consider, for $1 \leq i \leq N$,
\[
\tilde V_i \ceq \{p \in \G: d(p,a_i) \leq d(p,a_j), \text{ for all } j \neq i\},
\] 
and
\[
 V_1 \ceq \tilde V_1, \qquad V_i \ceq  \tilde V_i \setminus  \left(  \bigcup_{j<i} \tilde V_j  \right) \text{ if }i \geq 2.
\]
Then we have
\begin{equation}\label{eq_v1}
V_{N,r}(U(C)) = \sum_{i=1}^N \int_{V_i \cap C}d(x,a_i)^r dx.
\end{equation}
For every $1 \leq i \leq N$ we apply Lemma \ref{lem_momenti} with $V=V_i \cap C$ and $x_0=a_i$, hence we can find $R_i\geq 0$ such that $ \leb^n(B(a_i,R_i))=\leb^n(V_i \cap C) \eqqcolon m_i$ and
\begin{equation}\label{eq_vaux}
\int_{V_i \cap C}d(x,a_i)^r dx  \geq \int_{B(a_i,R_i)}d(x,a_i)^rdx.
\end{equation}
By left-invariance and homogeneity we obtain 
\[
R_i= \left(\frac{m_i}{\leb^n (B(0,1))}  \right)^\frac{1}{Q},
\]
and
\begin{equation}\label{eq_v2}
\int_{B(a_i,R_i)}d(x,a_i)^rdx=R_i^{Q+r}\int_{B(0,1)}d(x,0)^rdx= \left(\frac{m_i}{\leb^n (B(0,1))}  \right)^\frac{Q+r}{Q}\int_{B(0,1)}d(x,0)^rdx.
\end{equation}
We observe
\begin{align}\label{eq_v3}
\int_{B(0,1)}d(x,0)^rdx&=\int_0^1 \leb^n(\{x \in B(0,1):d(x,0)^r \geq t \})dt
   \\&=\int_0^1 rs^{r-1}\leb^n(\{x \in B(0,1):d(x,0) \geq s \})ds \notag
   \\&=\int_0^1 rs^{r-1}\big(\leb^n(B(0,1))-\leb^n(B(0,s))\big)ds \notag
    \\&=\leb^n(B(0,1))\int_0^1 rs^{r-1}\big(1-s^Q\big)ds \notag
    \\&=\leb^n(B(0,1))\frac{Q}{Q+r}.\notag
\end{align}
By \eqref{eq_v1}, \eqref{eq_vaux}, \eqref{eq_v2} and \eqref{eq_v3} we obtain
\begin{equation}\label{eq_vv1}
V_{N,r}(U(C)) \geq \leb^n (B(0,1))^{-\frac{r}{Q}}\frac{Q}{Q+r} \sum_{i=1}^N (m_i)^\frac{Q+r}{Q}.
\end{equation}
Since $\tfrac{Q+r}{Q}>1$ the map $t \to t^{\tfrac{Q+r}{Q}}$ is convex; moreover $\sum_{i=1}^N m_i=\leb^n(C)=1$ hence, by Jensen's inequality, we obtain 
\begin{equation}\label{eq_vv2}
\sum_{i=1}^N (m_i)^\frac{Q+r}{Q} \geq N^{-\frac{r}{Q}}.
\end{equation}
From \eqref{eq_vv1} and \eqref{eq_vv2} we obtain
\[
V_{N,r}(U(C)) \geq \leb^n (B(0,1))^{-\frac{r}{Q}}\frac{Q}{Q+r} N^{-\frac{r}{Q}}.
\]
By multiplying the latter by $N^\frac{r}{Q}$ and taking the infimum over $N \geq 1$ we obtain
\[
Q_r(C) \geq \leb^n (B(0,1))^{-\frac{r}{Q}}\frac{Q}{Q+r} >0,
\]
concluding the proof.
\end{proof}
\subsection{Zador's Theorem for compactly supported absolutely continuous probability measures}
\label{subsec_zad2}
In Lemma \ref{lem_step2} below we prove Zador's Theorem for measures which are linear combinations of (suitably rescaled) uniform measures supported on disjoint copies $(C_1,\dots,C_m)$ of left translations and/or dilations of the Euclidean cube $C$. For the reader's convenience, we sketch here the main ideas of the proof. We want to prove that the right asymptotic rate for $V_{N,r}(\mu)$ is given by $N^{-\frac{r}{Q}}$ and that the asymptotic constant is obtained by \emph{allocation}. The idea is that, for $N$ large enough, an optimal quantization is obtained as follows:
\begin{enumerate}
    \item we decide how many points $N_i$ to allocate to each $C_i$;
    \item we optimize separately on each $C_i$;
    \item we sum the errors.
\end{enumerate}
In order to achieve this, we need two estimates: a lower bound, namely that every (global) choice of quantizers has to pay at least the sum of the local costs, and a (constructive) upper bound, namely that we construct a quantizer by putting $N_i$ points in every $C_i$ and obtain the desired error. In other words, the global problem is reduced to a problem of discrete optimization on the ratios $\frac{N_i}{N}$.

\begin{lem}\label{lem_step2}
    Let $m \in \N, \lambda>0$ and $s_i \geq 0$ such that $\sum_{i=1}^ms_i=1$. Let $\mu \ceq \sum_{i=1}^m s_iU( C_i)$ where each $C_i \se \G$ is obtained as $\delta_{\lambda} (p_i \cdot C)$ for some $p_i \in \Z^{n}$  and $\leb^{n}(C_i \cap C_j)=0$ for every $i \neq j$. Then
\[
\lim_{N \to +\infty}N^{\frac{r}{Q}}V_{N,r}(\mu)=Q_r(C)\nl \frac{d\mu}{d\leb^{n}} \nr_{L^{\frac{Q}{Q+r}}}
\]
where $\frac{d\mu}{d\leb^{n}}$ is the density of $\mu$ with respect to $\leb^{n}$, i.e., $\frac{d\mu}{d\leb^{n}}=\sum_{i=1}^m s_i \lambda^{-Q}\uno_{C_i}$.
\end{lem}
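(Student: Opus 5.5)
The plan is to prove the upper bound by an explicit allocation of centers among the $C_i$, and the lower bound by localizing an optimal set $\alpha_N$ to disjoint neighbourhoods of inner compact subsets of the $C_i$. Both bounds reduce to the finite-dimensional optimization problem of minimizing $\sum_i s_i x_i^{-r/Q}$ subject to $\sum_i x_i\le 1$.

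\textbf{Reduction.} We may discard the indices with $s_i=0$, so assume $s_i>0$ for all $i$. Let $h\ceq\frac{d\mu}{d\leb^n}$. Since $h=s_i\lambda^{-Q}$ on $C_i$ and $\leb^n(C_i)=\lambda^Q$, a direct computation gives
\[
\nl h\nr_{L^{\frac{Q}{Q+r}}}=\lambda^r\Big(\sum_{i=1}^m s_i^{\frac{Q}{Q+r}}\Big)^{\frac{Q+r}{Q}}.
\]
By Lemma \ref{lem_basicpro}(i) and Lemma \ref{lem_step1}, for each fixed $i$ and $N_i\to+\infty$ we have
\[
V_{N_i,r}(U(C_i))=\lambda^rV_{N_i,r}(U(C)),\qquad N_i^{r/Q}V_{N_i,r}(U(C_i))\to\lambda^rQ_r(C).
\]

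\textbf{Upper bound.} Set $t_i\ceq s_i^{Q/(Q+r)}/\sum_j s_j^{Q/(Q+r)}$ and $N_i\ceq[t_iN]$, so that $\sum_i N_i\le N$. Lemma \ref{lem_basicpro}(iv.B) gives $V_{N,r}(\mu)\le\sum_i s_iV_{N_i,r}(U(C_i))$. Multiplying by $N^{r/Q}$ and using $N/N_i\to t_i^{-1}$, we get
\[
\limsup_{N\to+\infty}N^{r/Q}V_{N,r}(\mu)\le \lambda^rQ_r(C)\sum_i s_it_i^{-r/Q}.
\]
With this choice of $t_i$, the right-hand side equals exactly $Q_r(C)\nl h\nr_{L^{Q/(Q+r)}}$.

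\textbf{Lower bound.} This is the main obstacle, because an optimal center near a common face of two cubes may serve both. I would proceed as follows.
\begin{enumerate}
\item Fix $\eta\in(0,1)$. For each $i$ choose a compact set $D_i\subset\operatorname{int}C_i$ with $\leb^n(C_i\setminus D_i)\le\theta\lambda^Q$, where $\theta$ is small and is chosen after $\eta$. Since the sets $C_i$ have pairwise disjoint interiors, the compact sets $D_i$ satisfy $d(D_i,D_j)\ge3\gamma$ for some $\gamma>0$ and all $i\ne j$.
\item Take $\alpha_N\in C_{N,r}(\mu)$ and set $\beta_i\ceq\{a\in\alpha_N:d(a,D_i)<\gamma\}$. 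These sets are pairwise disjoint, so $N_i\ceq\#\beta_i$ satisfies $\sum_iN_i\le N$.
\item Let $\Lambda^i$ be the net of Corollary \ref{cor_net} for $K=\overline{C_i}$ with $[\eta N]$ points, and set $\tilde\beta_i\ceq\beta_i\cup\Lambda^i$.
\item Claim: for $y\in D_i$ and $N$ large, $\min_{\tilde\beta_i}d(y,\cdot)\le\min_{\alpha_N}d(y,\cdot)$. If the nearest point of $\alpha_N$ to $y$ lies in $\beta_i$, this is clear. Otherwise that nearest point is at distance at least $\gamma$ from $y$, while $d(y,\Lambda^i)\le A(\eta N)^{-1/Q}<\gamma$ for $N$ large.
\item Using the claim on $D_i$ and the net bound on $C_i\setminus D_i$, I expect
\[
V_{N,r}(\mu)\ \ge\ \sum_i s_iV_{N_i+[\eta N],r}(U(C_i))\;-\;\sum_i s_i\theta A^r(\eta N)^{-r/Q}.
\]
\item Multiply by $N^{r/Q}$. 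Pass to a subsequence along which $N_i/N\to x_i$ with $\sum_ix_i\le1$. Then Lemma \ref{lem_step1} gives
\[
\liminf_{N\to+\infty}N^{r/Q}V_{N,r}(\mu)\ge\lambda^rQ_r(C)\sum_i s_i(x_i+\eta)^{-r/Q}-m\theta A^r\eta^{-r/Q}.
\]
This is valid even when $x_i=0$, because of the $\eta N$ extra points.
\end{enumerate}

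\textbf{Conclusion.} By Hölder's inequality (equivalently, Lagrange multipliers), for every allocation $(y_i)$ with $\sum_i y_i\le Y$ we have
\[
\sum_i s_iy_i^{-r/Q}\ge Y^{-r/Q}\Big(\sum_i s_i^{\frac{Q}{Q+r}}\Big)^{\frac{Q+r}{Q}}.
\]
We apply this with $y_i=x_i+\eta$ and $Y=1+m\eta$. We then let $\theta\to0$ first and $\eta\to0$ afterwards, which yields the matching lower bound $Q_r(C)\nl h\nr_{L^{Q/(Q+r)}}$ and completes the proof.
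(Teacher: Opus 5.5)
Your proof is correct. The upper bound is the same allocation argument as in the paper. The lower bound follows the same overall scheme (localize an optimal set to the cells, add guard points, optimize the allocation), but the localization works differently.

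\textbf{The paper's lower bound.}
\begin{itemize}
\item It shrinks each cell to the self-similar inner copy $C_i^\rho=\delta_\lambda(p_i\cdot\delta_\rho C)$.
\item It adds a \emph{fixed} finite guard set $\Gamma_i^\rho$ of cardinality $k_\rho$, satisfying $d(x,\Gamma_i^\rho)\le d(x,\G\setminus\operatorname{int}C_i)$ on $C_i^\rho$.
\item It counts centers by $\alpha_N\cap\operatorname{int}(C_i)$.
\item By homogeneity, it evaluates the localized cost exactly as $\rho^{Q+r}\lambda^rV_{N_i+k_\rho,r}(U(C))$.
\end{itemize}
Since $k_\rho/N\to0$, the allocation loses nothing. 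The cost is that $v_i=0$ must be ruled out separately, using the finite $\limsup$. The proof then lets $\rho\to1$ and invokes \cite[Lemma 6.8]{gl00}.

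\textbf{Your lower bound.} You use a guard net of $[\eta N]$ points covering all of $\overline{C_i}$, and this net does two jobs:
\begin{itemize}
\item On $D_i$, it makes centers outside the $\gamma$-neighbourhood of $D_i$ irrelevant.
\item It bounds the cost on the boundary layer $C_i\setminus D_i$.
\end{itemize}
As a result you can compare with the full $V_{N_i+[\eta N],r}(U(C_i))$ rather than with a shrunken cube.

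\textbf{Trade-offs.}
\begin{itemize}
\item You pay a factor $(1+m\eta)^{-r/Q}$ and an additive error $\theta A^r\eta^{-r/Q}$, both removed by letting $\theta\to0$ and then $\eta\to0$.
\item In exchange, every allocation fraction is at least $\eta$, so no $v_i>0$ argument is needed.
\item Your inner sets $D_i$ can be arbitrary compact sets obtained by inner regularity, so you never use the dilation property $\delta_\rho C\subset C$. Your argument would therefore carry over to cells without that self-similarity.
\item You prove the allocation inequality directly by H\"older instead of citing it.
\end{itemize}

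\textbf{A minor remark.} For the lower bound, neither your argument nor the paper's actually needs Lemma \ref{lem_step1}. The inequality $M^{r/Q}V_{M,r}(U(C))\ge Q_r(C)$ holds for every $M$ directly from the definition of $Q_r(C)$.
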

\begin{proof}
   Without loss of generality, we assume $s_i>0$ for every $i$. For $i \in \{1,\dots,m\}$  we define
    \[
    t_i \ceq \frac{s_i^\frac{Q}{Q+r}}{\sum_{j=1}^m s_j^\frac{Q}{Q+r}},
    \]
    and for $N \geq \max_{1 \leq i \leq m} 1/t_i$ we define
    \[
     N_i=N_i(N) \ceq [t_iN],
    \]
Since $\sum_{i=1}^m N_i\leq N$, by Lemma \ref{lem_basicpro},
    \[
    V_{N,r}(\mu) \leq \sum_{i=1}^m s_iV_{N_i,r}(U(C_i))=\lambda^{r}\sum_{i=1}^m s_i V_{N_i,r}(U(C)).
    \]
By Lemma \ref{lem_step1}
\[
N^{\frac{r}{Q}}V_{N_i,r}(U(C))=\left(\frac{N}{N_i}\right)^\frac{r}{Q}N_i^{\frac{r}{Q}}V_{N_i,r}(U(C))\xrightarrow{N \to +\infty}t_i^{-\frac{r}{Q}} Q_r(C).
\]
The latter implies 
\begin{equation}\label{eq_limsup}
    \limsup_{N \to +\infty} N^{\frac{r}{Q}}V_{N,r}(\mu) \leq Q_r(C) \sum_{i=1}^m s_i t_i^{- \frac{r }{Q}}\lambda^{r}=Q_r(C) \nl \frac{d\mu}{d\leb^{n}} \nr_{L^{\frac{Q}{Q+r}}}.
\end{equation}
 We are left to prove that 
 \[   
    \liminf_{N \to +\infty}N^{\frac{r}{Q}}V_{N,r}(\mu)
    \geq Q_r(C)\nl \frac{d\mu}{d\leb^{n}} \nr_{L^{\frac{Q}{Q+r}}}.
    \]
Let $0<\rho<1$ and define, for every $i=1,\dots,m$,
\[
C^\rho \ceq \delta_\rho(C) \subset C,\qquad C^\rho_i \ceq \delta_\lambda(p_i 
\cdot C^\rho).
\]
Since $C^\rho\subset C$, for every $i$ the set $C_i^\rho$ is compactly contained in
$\operatorname{int}(C_i)$. Therefore the function
\[
x\longmapsto d(x,\G\setminus \operatorname{int}(C_i))
\]
is continuous and strictly positive on $C_i^\rho$. Hence there exists a finite set
$\Gamma_i^\rho\subset C_i^\rho$ such that
\[
d(x,\Gamma_i^\rho)\le d(x,\G\setminus \operatorname{int}(C_i))
\qquad\text{for every }x\in C_i^\rho.
\]
Since all the sets $C_i^\rho$ are left-translated/dilated copies of the same set,
we may assume
\[
\#\Gamma_i^\rho=k_\rho
\]
for every $i$. Let $\alpha_N\in C_{N,r}(\mu)$ and define
\[
N_i(N)\ceq \#\bigl(\alpha_N\cap \operatorname{int}(C_i)\bigr).
\]
Passing to a subsequence if necessary, we may assume that
\[
\frac{N_i(N)}{N}\xrightarrow[N\to+\infty]{} v_i\in[0,1],
\qquad i=1,\dots,m.
\]
Since the interiors of the $C_i$ are pairwise disjoint, we have
\[
\sum_{i=1}^m v_i\le 1.
\]
Fix $i\in\{1,\dots,m\}$ and $x\in C_i^\rho$. If $a\in \alpha_N\setminus \operatorname{int}(C_i)$, then
\[
d(x,a)\ge d(x,\G\setminus \operatorname{int}(C_i))\ge d(x,\Gamma_i^\rho),
\]
therefore
\[
d(x,\alpha_N\cup \Gamma_i^\rho)
=
d\bigl(x,(\alpha_N\cap \operatorname{int}(C_i))\cup \Gamma_i^\rho\bigr).
\]
Using this identity, we obtain
\begin{align*}
V_{N,r}(\mu)
&=
\sum_{i=1}^m s_i\int_{C_i} d(x,\alpha_N)^rdU(C_i)(x)\\
&\ge
\sum_{i=1}^m s_i\int_{C_i^\rho} d(x,\alpha_N)^rdU(C_i)(x)\\
&\ge
\sum_{i=1}^m s_i\int_{C_i^\rho} d\bigl(x,(\alpha_N\cap \operatorname{int}(C_i))\cup \Gamma_i^\rho\bigr)^rdU(C_i)(x).
\end{align*}
Since
\[
\#\bigl((\alpha_N\cap \operatorname{int}(C_i))\cup \Gamma_i^\rho\bigr)\le N_i(N)+k_\rho,
\]
it follows from the definition of $V_{N,r}$ that
\[
\int_{C_i^\rho} d\bigl(x,(\alpha_N\cap \operatorname{int}(C_i))\cup \Gamma_i^\rho\bigr)^r\,dU(C_i)(x)
\geq
\frac{\leb^n(C_i^\rho)}{\leb^n(C_i)}\,V_{N_i(N)+k_\rho,r}(U(C_i^\rho)).
\]
Therefore
\[
V_{N,r}(\mu)\geq
\sum_{i=1}^m s_i\frac{\leb^n(C_i^\rho)}{\leb^n(C_i)}
V_{N_i(N)+k_\rho,r}(U(C_i^\rho)).
\]
We observe that
\[
\frac{\leb^n(C_i^\rho)}{\leb^n(C_i)}=\rho^Q,
\]
and, by left invariance and homogeneity,
\[
V_{N_i(N)+k_\rho,r}(U(C_i^\rho))=(\lambda\rho)^rV_{N_i(N)+k_\rho,r}(U(C)).
\]
We claim that $v_i>0$ for every $i$. Indeed, if $v_i=0$ for some $i$, then the above lower bound, Lemma \ref{lem_qpositivo}, and the definition of $Q_r(C)$ give
\begin{align*}
N^{\frac rQ}V_{N,r}(\mu)
&\geq s_i\rho^{Q+r}\lambda^r
\left(\frac{N}{N_i(N)+k_\rho}\right)^{\frac rQ}
(N_i(N)+k_\rho)^{\frac rQ}V_{N_i(N)+k_\rho,r}(U(C))\\
&\geq s_i\rho^{Q+r}\lambda^r Q_r(C)
\left(\frac{N}{N_i(N)+k_\rho}\right)^{\frac rQ}.
\end{align*}
The right-hand side tends to $+\infty$, contradicting the finite upper bound in \eqref{eq_limsup}. Using Lemma \ref{lem_step1}, since $v_i>0$ we infer
\[
\liminf_{N\to+\infty}
N^{\frac rQ}V_{N_i(N)+k_\rho,r}(U(C_i^\rho))
\ge
v_i^{-\frac{r}{Q}}(\lambda\rho)^rQ_r(C).
\]
Consequently
\[
\liminf_{N\to+\infty}N^{\frac rQ}V_{N,r}(\mu)
\geq
\rho^{Q+r}\lambda^rQ_r(C)\sum_{i=1}^m s_i v_i^{-\frac{r}{Q}}.
\]
 By letting $\rho \to 1$ and by \cite[Lemma 6.8]{gl00} we obtain
\[
\liminf_{N\to+\infty}N^{\frac rQ}V_{N,r}(\mu)
 \geq Q_r(C)\lambda^{r}\left( \sum_{i=1}^m s_i^\frac{Q}{Q+r} \right)^\frac{Q+r  }{Q}= Q_r(C) \nl \frac{d\mu}{d\leb^{n}} \nr_{L^{\frac{Q}{Q+r}}},
\]
concluding the proof.
\end{proof}

Before proving Zador's Theorem for compactly supported absolutely continuous measures we need an auxiliary technical lemma.

\begin{lem}\label{lem_convhk}
    Let $\mu$ be absolutely continuous with respect to $\leb^n$ and with compact support, i.e., $\mu=h\leb^n$ and $\operatorname{spt}(h) \subset K$ for some compact set $K \subset \G$. For every $k \in \N$ let $\mathcal T_k$ be defined as
    \[
 \mathcal T_k \ceq \left\lbrace   \delta_{1/k}(\gamma \cdot C) : \gamma \in \Z^n \right\rbrace,
    \]
    and $\mathcal P_k$ as the finite subset of all the elements of $\mathcal T_k$ that have non-empty intersection with $K$. Define, for every $k \in \N$
    \[
    h_k \ceq \sum_{E \in \mathcal P_k} \frac{\mu(E)}{\leb^n(E)} \uno_{E},
    \]
    i.e., $h_k$ is the density (with respect to $\leb^n$) of the measure 
    \[
    \mu_k \ceq \sum_{E \in \mathcal P_k} \mu(E)U(E).
    \]
Then 
\[
\nl h_k-h \nr_{L^1} \xrightarrow{k \to +\infty}0.
\]
Consequently
\[
\|h_k-h\|_{L^{\frac{Q}{Q+r}}}\xrightarrow{k\to+\infty}0
\qquad\text{and}\qquad
\|h_k\|_{L^{\frac{Q}{Q+r}}}\xrightarrow{k\to+\infty}\|h\|_{L^{\frac{Q}{Q+r}}}.
\]
\end{lem}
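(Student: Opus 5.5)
The plan is to prove $L^1$ convergence through a Lebesgue differentiation argument adapted to the pavage, and then pass to the $L^{Q/(Q+r)}$ statements by Hölder's inequality on a bounded set. First, the sets $\mathcal P_k$ can be handled without approximation issues. Each cell $E=\delta_{1/k}(\gamma\cdot C)$ has diameter at most $D/k$, where $D=\operatorname{diam}(C)$, and measure $k^{-Q}$. Hence every cell in $\mathcal P_k$ lies in the fixed compact set $K'\ceq\{x:d(x,K)\le D\}$, so $\mathcal P_k$ is finite. Since the cells of $\mathcal T_k$ partition $\G$ (Lemma \ref{lem_pavage}) and $h$ vanishes outside $K$, we have $\mu(E)=0$ for every cell not in $\mathcal P_k$. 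Therefore $h_k$ is exactly the conditional expectation $\mathbb E[h\mid\mathcal T_k]$ of $h$ with respect to the partition $\mathcal T_k$.

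Second, I would prove $\|h_k-h\|_{L^1}\to0$ by density. Given $\ve>0$, choose $g\in C_c(\G)$ with $\|h-g\|_{L^1}<\ve$, and let $g_k\ceq\sum_{E\in\mathcal T_k}\frac{1}{\leb^n(E)}\int_E g\,\uno_E$. Averaging over a partition is an $L^1$-contraction, so $\|h_k-g_k\|_{L^1}\le\|h-g\|_{L^1}<\ve$. Uniform continuity of $g$, together with $\operatorname{diam}(E)\le D/k$, gives $|g_k-g|\le\omega_g(D/k)$ pointwise. Moreover $g_k-g$ is supported in a fixed compact neighbourhood of $\operatorname{spt} g$, so $\|g_k-g\|_{L^1}\to0$. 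The triangle inequality then yields $\limsup_k\|h_k-h\|_{L^1}\le2\ve$.

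Third, set $p\ceq Q/(Q+r)\in(0,1)$. Both $h_k$ and $h$ are supported in $K'$, which has finite measure. By Hölder's (Jensen's) inequality,
\[
\int_{K'}|h_k-h|^p\,d\leb^n\le\leb^n(K')^{1-p}\|h_k-h\|_{L^1}^p\to0 .
\]
The norm convergence follows from the elementary inequality $\bigl|\int|h_k|^p-\int|h|^p\bigr|\le\int|h_k-h|^p$, valid for $0<p<1$, together with continuity of $t\mapsto t^{1/p}$.

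The main obstacle is that $h$ is only $L^1$: one cannot argue pointwise through a differentiation theorem for the non-doubling-looking cells. The density/contraction argument above sidesteps this, since it needs only the bounded diameter of the cells and their partition property. A second point to handle carefully is that $\|\cdot\|_{L^p}$ is only a quasi-norm for $p<1$. For this reason I would phrase the first consequence as $\int|h_k-h|^p\to0$, which is the meaningful statement and suffices for the norm convergence.
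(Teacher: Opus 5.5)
Your proof is correct. Its last step (Hölder on the fixed compact set $K'$, then $|a^p-b^p|\le|a-b|^p$) is the same as the paper's, but your $L^1$ step takes a different route.

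The paper proves pointwise convergence first. For $x\in K$, the cell $E\ni x$ of $\mathcal T_k$ lies in $B(x,D/k)$ (up to a harmless open/closed-ball issue), and $\leb^n(B(x,D/k))/\leb^n(E)=\leb^n(B(0,1))D^Q$ does not depend on $k$. The Lebesgue differentiation theorem in the doubling space $(\G,d,\leb^n)$ then gives $h_k\to h$ a.e. on $K$. Off $K$ one has $h_k(x)=0=h(x)$ for $k$ large. Scheffé's lemma, using $h_k\ge 0$ and $\int h_k=\mu(\G)=\int h$, upgrades this to $L^1$ convergence.

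You instead identify $h_k$ with the averaging operator over the partition $\mathcal T_k$. You then use its $L^1$-contractivity together with an approximation by $g\in C_c(\G)$.

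Your argument is more elementary. It needs only that $\mathcal T_k$ partitions $\G$ into cells of positive finite measure with diameter at most $D/k\to0$. It uses no differentiation theorem, no comparison of cells with balls, and no Scheffé. The paper's route yields the stronger intermediate fact of a.e. convergence.

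One correction to your closing remark: the pointwise route is not blocked. The cells are left-translated dilates of one fixed bounded set, so each sits inside a ball of comparable measure. Differentiating along them is therefore just Lebesgue differentiation in a $Q$-Ahlfors-regular space, which is exactly what the paper uses.
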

\begin{proof}
Let $k \in \N$, $x \in K$ and $\bar p \in \Z^n$ be such that\footnote{The existence and the uniqueness of such $\bar p$ is guaranteed by Lemma \ref{lem_pavage}.} $x \in \delta_{1/k}(\bar p \cdot C) \in \mathcal P_k$. Then
\[
h_k(x)=\frac{\mu(\delta_{1/k}(\bar p \cdot C) )}{\leb^n (\delta_{1/k}(\bar p \cdot C) )}=\frac{1}{\leb^n (\delta_{1/k}(\bar p \cdot C) )}\int_{\delta_{1/k}(\bar p \cdot C) }h(y)dy.
\]
If we denote by $D \ceq \operatorname{diam}C$ then we have that 
\[
\delta_{1/k}(\bar p \cdot C) \subset B(x,D/k) 
\]
and we can write
\begin{align*}
|h_k(x)-h(x)|&\leq \frac{1}{\leb^n (\delta_{1/k}(\bar p \cdot C) )}\int_{\delta_{1/k}(\bar p \cdot C) }|h(y)-h(x)|dy \\&\leq \frac{\leb^n(B(x,D/k))}{\leb^n (\delta_{1/k}(\bar p \cdot C) )}\frac{1}{\leb^n(B(x,D/k))}\int_{B(x,D/k)}|h(y)-h(x)|dy
\end{align*}
Since
\[
\frac{\leb^n(B(x,D/k))}{\leb^n (\delta_{1/k}(\bar p \cdot C) )}=\frac{\leb^n(B(0,1))k^{-Q}D^{Q}}{\leb^n(C)k^{-Q}}=\leb^n(B(0,1))D^{Q}
\]
is uniformly bounded, by Lebesgue differentiation theorem,
    \begin{equation}\label{eq_rrr}
    h_k \xrightarrow{k \to +\infty} h  \qquad  \leb^{n}\text{-a.e. on }K.
    \end{equation}
On the other hand, if $x \not \in K$ let $\bar p \in \Z^n$ be such that\footnote{The existence and the uniqueness of such $\bar p$ is guaranteed by Lemma \ref{lem_pavage}.} $x \in \delta_{1/k}(\bar p \cdot C) \in \mathcal T_k$. Whenever $k$ is sufficiently large, $\delta_{1/k}(\bar p \cdot C) \cap K =\emptyset $, hence $h_k(x)=h(x)=0$, the latter implying that also 
     \begin{equation}\label{eq_rrr1}
    h_k \xrightarrow{k \to +\infty} h \equiv 0  \qquad  \leb^{n}\text{-a.e. on }\G \setminus K.
    \end{equation}
  Combining \eqref{eq_rrr} and \eqref{eq_rrr1} and by using Scheffé's lemma (\cite{scheffe}),
\begin{equation}\label{eq_scheffe}
\lim_{k \to +\infty}\nl h_k-h \nr_{L^1} =0.
\end{equation}
 Since $h$ and the $h_k$ are supported in a fixed compact set, \eqref{eq_scheffe} implies
\[
\int_\G |h_k-h|^\frac{Q}{Q+r}\,dx \leq c\left(\int_\G |h_k-h|dx\right)^\frac{Q}{Q+r} \xrightarrow{k\to+\infty}0,
\]
for a constant $c$ depending only on this common compact support. Hence
\[
\|h_k-h\|_{L^\frac{Q}{Q+r}}\xrightarrow{k\to+\infty}0.
\]
Moreover, since $0<\frac{Q}{Q+r}<1$, $|a^\frac{Q}{Q+r}-b^\frac{Q}{Q+r}|\leq |a-b|^\frac{Q}{Q+r}$ for $a,b\geq0$, and therefore
\[
\int_\G |h_k(x)^\frac{Q}{Q+r}-h(x)^\frac{Q}{Q+r}|dx \leq \int_\G |h_k(x)-h(x)|^\frac{Q}{Q+r} dx \xrightarrow{k\to+\infty}0.
\]
and
\[
\|h_k\|_{L^\frac{Q}{Q+r}}\xrightarrow{k\to+\infty} \|h\|_{L^\frac{Q}{Q+r}},
\]
concluding the proof.
\end{proof}

Lemma \ref{lem_step3} below proves Zador's Theorem for compactly supported absolutely continuous (with respect to $\leb^n$) measures. The idea is to use Lemma \ref{lem_convhk} to approximate, in some sense, every measure with a sequence of measures supported on copies of left translation and/or dilations of the Euclidean cube $C$ and use Lemma \ref{lem_step2} to conclude.

\begin{lem}\label{lem_step3}
Let $\mu$ be absolutely continuous with respect to $\leb^{n}$ and with compact support. Then
\[
\lim_{N \to +\infty}N^{\frac{r}{Q}}V_{N,r}(\mu)=Q_r(C)\nl \frac{d\mu}{d\leb^{n}} \nr_{L^{\frac{Q}{Q+r}}}.
\]
\end{lem}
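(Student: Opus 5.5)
The plan is to approximate $\mu=h\leb^n$ by the piecewise-uniform measures $\mu_k=h_k\leb^n$ of Lemma \ref{lem_convhk}. Lemma \ref{lem_step2} applies to each $\mu_k$, since $\mu_k=\sum_{E\in\mathcal P_k}\mu(E)U(E)$ and every $E=\delta_{1/k}(\gamma\cdot C)$ is a dilated left translate of $C$ with disjoint interiors. It gives
\[
\lim_{N\to+\infty}N^{\frac rQ}V_{N,r}(\mu_k)=Q_r(C)\nl h_k\nr_{L^{\frac{Q}{Q+r}}}.
\]
By Lemma \ref{lem_convhk} the right-hand side tends to $Q_r(C)\nl h\nr_{L^{\frac{Q}{Q+r}}}$ as $k\to+\infty$. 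So it remains to compare $V_{N,r}(\mu)$ with $V_{N,r}(\mu_k)$ at the scale $N^{-r/Q}$, with an error that is small in $\nl h-h_k\nr_{L^1}$, uniformly in $N$.

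The comparison uses the common part of the two densities. Fix $k$ and write $h=\min(h,h_k)+(h-h_k)^+$ and $h_k=\min(h,h_k)+(h_k-h)^+$. All these functions are supported in the fixed compact set $K'=\bigcup_{E\in\mathcal P_1}E\supset K$, which contains the supports of all $h_k$. Set $\eta_k\ceq\nl (h-h_k)^+\nr_{L^1}=\nl (h_k-h)^+\nr_{L^1}=\tfrac12\nl h-h_k\nr_{L^1}$; this tends to $0$ by Lemma \ref{lem_convhk}. Write $\nu_k$ for the normalized common part, so that
\[
\mu=(1-\eta_k)\nu_k+\eta_k\sigma_k,\qquad \mu_k=(1-\eta_k)\nu_k+\eta_k\tau_k,
\]
where $\sigma_k,\tau_k$ are probability measures supported in $K'$. (If $\eta_k=0$ the two measures coincide and there is nothing to do.) We then argue in two directions.

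\emph{Upper bound for $\mu$.} Fix $0<\ve<1$ and split the centers as $[(1-\ve)N]$ and $[\ve N]$. Lemma \ref{lem_basicpro}(iv.B), applied to both decompositions, gives
\[
V_{N,r}(\mu)\le (1-\eta_k)V_{[(1-\ve)N],r}(\nu_k)+\eta_kV_{[\ve N],r}(\sigma_k).
\]
Concavity (iv.A) applied to $\mu_k$ gives $(1-\eta_k)V_{M,r}(\nu_k)\le V_{M,r}(\mu_k)$ for every $M$. Pierce's Lemma (Theorem \ref{teo_pierce}) bounds $N^{r/Q}V_{[\ve N],r}(\sigma_k)$ by $c\,\ve^{-r/Q}(1+\operatorname{diam}(K')^{r+1})$, uniformly in $k$, because all supports lie in the fixed compact $K'$. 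Multiplying by $N^{r/Q}$ and taking $\limsup_N$ yields
\[
\limsup_N N^{\frac rQ}V_{N,r}(\mu)\le (1-\ve)^{-\frac rQ}Q_r(C)\nl h_k\nr_{L^{\frac{Q}{Q+r}}}+\eta_k\ve^{-\frac rQ}M.
\]
We let $k\to+\infty$ first and then $\ve\to0$.

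\emph{Lower bound for $\mu$.} The same computation with the roles of $\mu$ and $\mu_k$ exchanged gives
\[
Q_r(C)\nl h_k\nr_{L^{\frac{Q}{Q+r}}}\le (1-\ve)^{-\frac rQ}\liminf_N N^{\frac rQ}V_{N,r}(\mu)+\eta_k\ve^{-\frac rQ}M.
\]
To justify the liminf here, apply (iv.B) along a subsequence $N_j$ realizing the liminf, noting that $[(1-\ve)N]$ ranges over all large integers. Then let $k\to+\infty$ and $\ve\to0$.

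The main obstacle is the uniformity of the error term: one must make sure the Pierce-type constant does not depend on $k$. This holds because every $h_k$, and $h$ itself, is supported in the fixed compact set $K'$, so the moments of $\sigma_k$ and $\tau_k$ are bounded independently of $k$. A second point needing care is the bookkeeping with integer parts and the passage from $V_{[(1-\ve)N]}$ to $N^{r/Q}$, which produces the factor $(1-\ve)^{-r/Q}$.
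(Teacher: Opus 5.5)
Your proof is correct, but the comparison step takes a different route from the paper's.

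\textbf{How the paper compares $\mu$ and $\mu_k$.} The paper also approximates by $\mu_k=h_k\leb^n$ and applies Lemma \ref{lem_step2}. It then works with explicit center sets. It takes $\alpha\in C_{N_1,r}(\mu_k)$ (resp.\ $\beta\in C_{N_1,r}(\mu)$) and adjoins the net $\Lambda_{N_2}$ of a fixed compact set $K'$ from Corollary \ref{cor_net}. This gives the pointwise bound $\min_a d(x,a)^r\le A^rN_2^{-r/Q}$ on all of $K'$, so the two integrals differ by at most $A^r\sigma^{-r/Q}\|h_k-h\|_{L^1}$.

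\textbf{How you compare them.} You split both measures into their common part and their excesses, and you stay at the level of the functional $V_{N,r}$:
\begin{itemize}
\item subadditivity (iv.B) on one side;
\item concavity (iv.A) to discard the excess on the other side;
\item Pierce's lemma to pay for the excess with $[\ve N]$ centers.
\end{itemize}

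\textbf{What each route buys.} Your version does not use Corollary \ref{cor_net} or any uniform ($L^\infty$) bound on the distance function. It only needs uniform moment bounds on the excess measures $\sigma_k,\tau_k$, and the error term becomes a moment-weighted $L^1$ discrepancy. The paper's version is more direct and avoids the mixture bookkeeping: normalizing the common part and handling the degenerate cases $\eta_k\in\{0,1\}$. Both routes spend a fraction $\ve$ of the centers on the discrepancy and produce the same factor $(1-\ve)^{-r/Q}$. Both also rely on $[(1-\ve)N]$ hitting every large integer when passing to $\liminf_N N^{r/Q}V_{N,r}(\mu)$; you make this explicit, the paper uses it tacitly.

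\textbf{A small correction.} Your choice $K'=\bigcup_{E\in\mathcal P_1}E$ does not contain the supports of all $h_k$, because the pavages at scales $1$ and $1/k$ are not nested. This already fails in the commutative case $\G=\R$ with $k=2$. If $\operatorname{spt}(h)=K=[0.3,0.4]$, then $\mathcal P_1=\{[-1/2,1/2)\}$ while $\mathcal P_2=\{[1/4,3/4)\}$, and $h_2=2\cdot\uno_{[1/4,3/4)}$ sticks out of $K'$. The fix is to take $K'$ to be the closed $\operatorname{diam}(C)$-neighbourhood of $K$. This set is compact and contains every cell of every $\mathcal P_k$, since each such cell has diameter $\operatorname{diam}(C)/k$ and meets $K$. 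Also, the moment entering Pierce's lemma is bounded by $\sup_{x\in K'}d(x,0)^{r+1}$, not by $\operatorname{diam}(K')^{r+1}$. Neither point affects the structure of your argument.
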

\begin{proof}
Let $K\subset \G$ be a compact set such that $\operatorname{spt}(h)\subset K$, where $h$ is the density of $\mu$ with respect to $\leb^n$. Let $\mathcal P_k$ and $h_k$
be as in Lemma \ref{lem_convhk}. Then
\[
\mu_k\ceq h_k\leb^n
=
\sum_{E\in\mathcal P_k}\mu(E)U(E).
\]
By Lemma \ref{lem_step2} we have, for every $k \in \N$,
\begin{equation}\label{eq_aux1}
\lim_{N\to+\infty}N^\frac{r}{Q}V_{N,r}(\mu_k)=Q_r(C)\|h_k\|_{L^{\frac{Q}{Q+r}}}.
\end{equation}
Fix $0<\sigma<1/2$, and for $N\in\mathbb N$ define
\[
N_1\ceq [ (1-\sigma)N ],
\qquad
N_2\ceq N-N_1.
\]
Choose a compact set $K'\supset \bigcup_{k \in \N} \operatorname{spt}(h_k)$.
Since $K'$ is compact, by Corollary \ref{cor_net}, there exists a constant $A=A(K')>0$ such that, for every $N \in \N$, one can find a finite set
\[
\Lambda_{N_2}\subset K'
\]
satisfying
\[
\#\Lambda_{N_2}\leq N_2,
\qquad
\sup_{x\in K'}d(x,\Lambda_{N_2})\le A N_2^{-\frac{1}{Q}}.
\]
Let $\alpha\in C_{N_1,r}(\mu_k)$ and $\Delta_N\ceq \alpha\cup \Lambda_{N_2}$. We have $\#\Delta_N \leq N$ and 
\begin{align*}
    N^{\frac{r}{Q}}\left| \int \min_{a \in \Delta_N}d(x,a)^r d\mu_k(x) -\int \min_{a \in \Delta_N} d(x,a)^r d\mu(x) \right| &\leq N^{\frac{r}{Q}}\int \min_{a \in \Delta_N} d(x,a)^r |h_k(x)-h(x)|dx\\
    &\leq \left(AN_2^{-\frac{1}{Q}}\right)^r N^{\frac{r}{Q}} \nl h_k -h \nr_{L^1}\\
    &\leq A^r \sigma^{-\frac{r}{Q}} \nl h_k -h \nr_{L^1}
\end{align*}
whenever $N$ is sufficiently large. The latter implies
\begin{align*}
    N^{\frac{r}{Q}}V_{N,r}(\mu) &\leq N^{\frac{r}{Q}}\int \min_{a \in \Delta_N} d(x,a)^r d\mu(x) \\
    &\leq N^{\frac{r}{Q}}\int \min_{a \in \Delta_N} d(x,a)^r d\mu_k(x)+ A^r \sigma^{-\frac{r}{Q}}\nl h_k-h \nr_{L^1}\\
    & \leq N^{\frac{r}{Q}}\int \min_{a \in \alpha} d(x,a)^r d\mu_k(x)+ A^r \sigma^{-\frac{r}{Q}} \nl h_k-h \nr_{L^1}\\
    & =N^{\frac{r}{Q}}V_{N_1,r}(\mu_k)+ A^r \sigma^{-\frac{r}{Q}} \nl h_k-h \nr_{L^1},
\end{align*}
hence
\[
  N^{\frac{r}{Q}}V_{N,r}(\mu) \leq \left(
\frac{N}{N_1}\right)^\frac{r}{Q}N_1^{\frac{r}{Q}}V_{N_1,r}(\mu_k)+ A^r \sigma^{-\frac{r}{Q}} \nl h_k-h \nr_{L^1}.
\]
From \eqref{eq_aux1} we obtain
\[
\limsup_{N \to +\infty }N^{\frac{r}{Q}}V_{N,r}(\mu) \leq (1-\sigma)^{-\frac{r}{Q}}Q_r(C) \nl h_k \nr_{L^{\frac{Q}{Q+r}}}+A^r \sigma^{-\frac{r}{Q}} \nl h_k-h \nr_{L^1}.
\]
By letting $k \to +\infty$ and $\sigma \to 0$ in the above inequality we obtain, thanks to Lemma \ref{lem_convhk},
\[
\limsup_{N \to +\infty }N^{\frac{r}{Q}}V_{N,r}(\mu) \leq Q_r(C) \nl h \nr_{L^{\frac{Q}{Q+r}}}.
\]
We are left to prove 
\[
\liminf_{N \to +\infty }N^{\frac{r}{Q}}V_{N,r}(\mu) \geq Q_r(C) \nl h \nr_{L^{\frac{Q}{Q+r}}}.
\]
Let $\beta \in C_{N_1,r}(\mu)$ and $\tau \ceq \beta \cup \Lambda_{N_2}$. We have $\#\tau \leq N$ and, as before,
\[
N^{\frac{r}{Q}}\left| \int \min_{a \in \tau}d(x,a)^r d\mu_k(x) -\int \min_{a \in \tau} d(x,a)^r d\mu(x) \right| \leq A^r \sigma^{-\frac{r}{Q}} \nl h_k-h \nr_{L^1},
\]
whenever $N$ is sufficiently large. The latter implies
\begin{align*}
    N^{\frac{r}{Q}}V_{N_1,r}(\mu) &= N^{\frac{r}{Q}}\int \min_{a \in \beta} d(x,a)^r d\mu(x) \\
    &\geq N^{\frac{r}{Q}}\int \min_{a \in \tau} d(x,a)^r d\mu(x)\\
    & \geq N^{\frac{r}{Q}}\int \min_{a \in \tau} d(x,a)^r d\mu_k(x)-A^r \sigma^{-\frac{r}{Q}} \nl h_k-h \nr_{L^1}\\
    & \geq N^{\frac{r}{Q}}V_{N,r}(\mu_k)-A^r \sigma^{-\frac{r}{Q}} \nl h_k-h \nr_{L^1},
\end{align*}
hence
\[
  \left(\frac{N}{N_1}\right)^{\frac{r}{Q}}N_1^\frac{r}{Q}V_{N_1,r}(\mu) \geq N^{\frac{r}{Q}}V_{N,r}(\mu_k)- A^r \sigma^{-\frac{r}{Q}} \nl h_k-h \nr_{L^1}.
\]
From \eqref{eq_aux1} we get
\[
(1-\sigma)^{-\frac{r}{Q}}\liminf_{N \to +\infty}N_1^{\frac{r}{Q}}V_{N_1,r}(\mu) \geq Q_r(C)\nl h_k \nr_{L^{\frac{Q}{Q+r}}}- A^r \sigma^{-\frac{r}{Q}} \nl h_k -h \nr_{L^1}.
\]
By letting $k \to +\infty$ and $\sigma \to 0$ in the above inequality we obtain, thanks to Lemma \ref{lem_convhk},
\[
\liminf_{N \to +\infty}N^{\frac{r}{Q}}V_{N,r}(\mu) \geq Q_r(C) \nl h \nr_{L^{\frac{Q}{Q+r}}},
\]
concluding the proof.
\end{proof}

\subsection{Zador's Theorem for compactly supported singular probability measures}
\label{subsec_zad3}

\begin{lem}\label{lem_step4}
 Let $\mu$ be a probability measure which is singular with respect to $\leb^{n}$ and with compact support. Then
\[
\lim_{N \to +\infty}N^{\frac{r}{Q}}V_{N,r}(\mu)=0.
\]
\end{lem}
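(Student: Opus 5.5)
Since $\mu\perp\leb^n$, we fix a Borel set $B$ with $\leb^n(B)=0$ and $\mu(\G\setminus B)=0$. Fix $\ve>0$. By outer regularity of $\leb^n$, choose an open set $U\supset B$ with $\leb^n(U)$ as small as we like. Inside $U$ we take a compact $F\subset B$ with $\mu(\G\setminus F)<\ve$; this is possible by inner regularity of the finite Radon measure $\mu$.

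We then cover $F$ by finitely many cells of a fixed pavage. Write $\mathcal T_k=\{\delta_{1/k}(\gamma\cdot C):\gamma\in\Z^n\}$ as in Lemma \ref{lem_convhk}. Since $F$ is compact, it has positive distance from $\G\setminus U$. Each cell has diameter at most $\operatorname{diam}(C)/k$, so for $k$ large every cell meeting $F$ is contained in $U$. Let $E_1,\dots,E_m$ be these cells. They are disjoint, and
\[
\sum_j\leb^n(E_j)\le\leb^n(U)\eqqcolon\eta .
\]

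We now decompose the measure as
\[
\mu=\sum_j \mu(E_j)\,\mu_j+\mu(\G\setminus \textstyle\bigcup_j E_j)\,\mu_0,
\]
with $\mu_j$ the normalized restriction of $\mu$ to $E_j$. The remainder $\mu_0$ has mass $<\ve$ and is compactly supported. By Theorem \ref{teo_pierce} it satisfies $N^{r/Q}V_{N,r}(\mu_0)\le M$, where $M$ depends only on the support, which lies in a fixed ball containing $\operatorname{spt}\mu$.

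On each $E_j$, which is a left translate of $\delta_{1/k}C$, place $N_j$ points by Corollary \ref{cor_net}, applied to the compact $\overline{E_j}$ rescaled from $\overline C$. This gives
\[
V_{N_j,r}(\mu_j)\le A^r k^{-r}N_j^{-r/Q},
\]
where $A=A(\overline C)$ is uniform in $j$ by left invariance and homogeneity.

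Next we allocate points. Set $N_0=[\sigma N]$, and choose
\[
N_j\approx(1-\sigma)N\,\frac{\mu(E_j)^{Q/(Q+r)}}{\sum_i\mu(E_i)^{Q/(Q+r)}}
\]
(taking care to give at least one point where needed; alternatively allocate proportionally and accept a weaker constant). By Lemma \ref{lem_basicpro}(iv.B) and the allocation computation of Lemma \ref{lem_step2} we get
\[
\limsup_N N^{r/Q}V_{N,r}(\mu)\le c\,k^{-r}\Big(\sum_j\mu(E_j)^{\frac{Q}{Q+r}}\Big)^{\frac{Q+r}{Q}}+\ve\,\sigma^{-r/Q}M .
\]
Apply Hölder's inequality with $\leb^n(E_j)=k^{-Q}$ and $p=\frac{Q+r}{Q}$:
\[
\sum_j\mu(E_j)^{\frac{Q}{Q+r}}\le m^{\frac{r}{Q+r}}.
\]
Since $mk^{-Q}\le\eta$, this yields
\[
k^{-r}\Big(\sum_j\mu(E_j)^{\frac{Q}{Q+r}}\Big)^{\frac{Q+r}{Q}}\le k^{-r}m^{r/Q}\le\eta^{r/Q}.
\]
Here the homogeneous dimension $Q$ exactly matches the cell volume scaling.

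Thus $\limsup_N N^{r/Q}V_{N,r}(\mu)\le c\,\eta^{r/Q}+\ve\,\sigma^{-r/Q}M$. Letting $\eta\to0$ first (via $U$), then $\ve\to0$ with $\sigma$ fixed, gives the claim. The main obstacle is keeping the allocation bookkeeping uniform: the integer parts $N_j\ge1$ must not cost more than a vanishing factor, which works because $m$ is fixed before $N\to\infty$. Getting the Hölder step right so that the bound depends only on $\leb^n(U)$ is the key point.
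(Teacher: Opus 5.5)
Your argument is correct, but it takes a different route from the paper's.

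The paper works at an $N$-dependent scale. It takes a bounded open $O$ with $\mu(O)=1$ and $\leb^n(O)\le\ve$, and tiles it with cells at scale $\lambda_N=(2\ve/N)^{1/Q}$. It puts exactly one center in each of the at most $N/2$ cells contained in $O$, and spends the remaining $\ge N/2$ centers on a net of $\operatorname{spt}\mu$ from Corollary \ref{cor_net}. The only limiting step is $\mu(O_N)\to1$, which follows from pointwise convergence of $\uno_{O_N}$ on $O$.

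You instead fix the scale $1/k$ and use inner regularity to cover a compact $F$ of almost full mass by finitely many cells inside $U$. You then place many centers in each cell via rescaled copies of a net of $\overline C$. Your cost $k^{-r}\bigl(\sum_j\mu(E_j)^{Q/(Q+r)}\bigr)^{(Q+r)/Q}$ is exactly the $L^{Q/(Q+r)}$ quasi-norm of the piecewise constant density $\sum_j\mu(E_j)k^{Q}\uno_{E_j}$. Hölder then bounds it by $\leb^n(U)^{r/Q}$, and Pierce (or simply a net on $\operatorname{spt}\mu$) absorbs the leftover mass $<\ve$.

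What each buys: the paper's proof is shorter and needs neither an allocation step nor Pierce's lemma. Yours is a two-scale argument that mirrors Lemma \ref{lem_step2}, and it replaces the convergence $\mu(O_N)\to1$ by a compactness/positive-distance argument.

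Two remarks on your allocation step. First, the optimal allocation is unnecessary: the uniform choice $N_j=[(1-\sigma)N/m]$ already gives $A^r(1-\sigma)^{-r/Q}(mk^{-Q})^{r/Q}$. Second, your parenthetical alternative of allocating proportionally to $\mu(E_j)$ should be dropped. It produces $\sum_j\mu(E_j)^{1-r/Q}$, which is not controlled when $r>Q$ because cells of tiny mass blow it up.
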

\begin{proof}
  Let $\ve>0$. Since $\mu$ is compactly supported and singular with respect to $\leb^n$ we can find a bounded open set $O \se \G$ such that 
  \[
  \mu(O)=1 \qquad \text{and}\qquad \leb^n(O) \leq \ve.
  \]
  For every $N \in \N$ we define
  \[
  \lambda_N \ceq \left( \frac{2\ve}{N}\right)^\frac{1}{Q}, \qquad  R_N \ceq \{ \delta_{\lambda_N}(p \cdot C): p \in \Z^n, \delta_{\lambda_N}(p \cdot C) \subset O \}
  \]
  and
  \[
  O_N \ceq \bigsqcup_{E \in R_N}E, \qquad M_N \ceq \#R_N.
  \]
  We observe in passing that $\lambda_N \xrightarrow{N \to +\infty}0$. Since $O_N$ is a disjoint union, by left-invariance and homogeneity, we obtain
  \[
  M_N \lambda_N^Q=\leb^n(O_N) \leq \leb^n(O)  \leq \ve,
  \]
  the latter implying
  \[
  M_N \leq \frac{\ve}{\lambda_N^Q}=\frac{N}{2}.
  \]
  We claim that 
  \begin{equation}\label{eq_claima}
      \mu(O_N) \xrightarrow{N \to +\infty}1.
  \end{equation}
  In fact, let $x \in O$; since $O$ is open, there exists $\rho_x>0$ such that 
  \[
  B(x,\rho_x) \subset O.
  \]
  Let $\bar p \in \Z^n$ such that\footnote{The existence and the uniqueness of such $\bar p$ is guaranteed by Lemma \ref{lem_pavage}.} $x \in \delta_{\lambda_N}(\bar p \cdot C)$ and $D \ceq \operatorname{diam}(C)$. We have
  \[
  \operatorname{diam}(\delta_{\lambda_N}(\bar p \cdot C)) \leq D \lambda_N,
  \]
  and, when $N$ is sufficiently large $D\lambda_N <\rho_x$, the latter implying that 
  \[
  \delta_{\lambda_N}(\bar p \cdot C) \subset O.
  \]
  Consequently $x \in O_N$ whenever $N$ is sufficiently large, i.e,
  \[
  \uno_{O_N}(x) \xrightarrow{N \to +\infty}1 \qquad \text{for every }x \in O.
  \]
  Since $\mu(O)=1$, by dominated convergence we obtain the claim \eqref{eq_claima}.  Now let $K \ceq \operatorname{spt}(\mu)$ and define
  \[
  M'_N \ceq N-M_N.
  \]
    We observe in passing that since $M_N \leq N/2$, then $M_N' \geq N/2$. By Corollary \ref{cor_net} there exist a constant $A=A(K)$ and a finite set $\Lambda_{M_N'} \se K$ such that
  \[
  \#\Lambda_{M_N'}\leq M_N' \qquad \text{ and }\qquad \sup_{x \in K}d(x,\Lambda_{M_N'}) \leq A(M_{N}')^{-\frac{1}{Q}}.
  \]
Moreover, for every $E \in R_N$ we choose a point $p_E \in E$ and we observe that, for every $q \in E$, we have
  \[
  d(q,p_E) \leq D\lambda_N
  \]
We define
\[
\alpha_N \ceq \{ p_E: E \in R_N\} \cup  \Lambda_{M_N'}.
\]
 Since $\#\alpha_N \leq N$ and we have
 \[
 \begin{cases}
d(x,\alpha_N) \leq D\lambda_N, \qquad x \in O_N\\
d(x,\alpha_N) \leq A(M_N')^{-\frac{1}{Q}}, \qquad x \in K \setminus O_N
 \end{cases}
 \]
 we infer
 \[
 V_{N,r}(\mu) \leq D^r\lambda_N^r\mu(O_N)+A^r(M_N')^{-\frac{r}{Q}}\mu(K \setminus O_N)
 \]
 multiplying the above inequality by $N^\frac{r}{Q}$ and recalling that, by the definitions of $\lambda_N$ and $M_N'$, we have
 \[
 N^\frac{r}{Q}\lambda_N^r=(2\ve)^\frac{r}{Q} \qquad \text{ and }\qquad \left(\frac{N}{M_N'}\right)^\frac{r}{Q} \leq 2^\frac{r}{Q},
 \]
 we obtain
 \[
 N^\frac{r}{Q}V_{N,r}(\mu) \leq D^r (2\ve)^\frac{r}{Q}+A^r2^\frac{r}{Q}\mu(K \setminus O_N).
 \]
 By taking the $\limsup$ of the above inequality and recalling that $\mu(O_N) \xrightarrow{N \to +\infty}1$ we obtain
 \[
 \limsup_{N \to +\infty}N^\frac{r}{Q}V_{N,r}(\mu) \leq D^r(2\ve)^\frac{r}{Q}.
 \]
 Letting $\ve \to 0$ is enough to conclude the proof.
\end{proof}

\begin{cor}\label{cor_compact_zador}
Let $\nu$ be a compactly supported probability measure on $\G$. Let $\nu=\nu_{\mathrm{ac}}+\nu_{\mathrm{s}}$ be its Lebesgue decomposition and set
\[
h\ceq \frac{d\nu_{\mathrm{ac}}}{d\leb^n}.
\]
Then
\[
\lim_{N\to+\infty}N^{\frac rQ}V_{N,r}(\nu)=Q_r(C)\|h\|_{L^{\frac{Q}{Q+r}}}.
\]
\end{cor}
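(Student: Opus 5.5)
The plan is to combine Lemma \ref{lem_step3} and Lemma \ref{lem_step4} through the mixing rules in Lemma \ref{lem_basicpro}(v). Two cases are trivial. If $\nu_{\mathrm{ac}}=0$, Lemma \ref{lem_step4} gives limit $0=Q_r(C)\|h\|_{L^{\frac{Q}{Q+r}}}$. If $\nu_{\mathrm{s}}=0$, Lemma \ref{lem_step3} gives the result directly.

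Otherwise write $s\ceq\nu_{\mathrm{ac}}(\G)\in(0,1)$, and set $\mu_1\ceq\nu_{\mathrm{ac}}/s$ and $\mu_2\ceq\nu_{\mathrm{s}}/(1-s)$. Then $\nu=s\mu_1+(1-s)\mu_2$. Both $\mu_1$ and $\mu_2$ are probability measures whose supports lie in $\operatorname{spt}(\nu)$, so they are compact and have finite $r$-moment. The measure $\mu_1$ is absolutely continuous with density $h/s$, so Lemma \ref{lem_step3} yields
\[
\lim_{N\to+\infty}N^{\frac rQ}V_{N,r}(\mu_1)=Q_r(C)\|h/s\|_{L^{\frac{Q}{Q+r}}}.
\]
The measure $\mu_2$ is singular, so Lemma \ref{lem_step4} yields $\lim_{N}N^{\frac rQ}V_{N,r}(\mu_2)=0$.

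Lemma \ref{lem_basicpro}(v.B) then gives
\[
\lim_{N\to+\infty}N^{\frac rQ}V_{N,r}(\nu)=s\,Q_r(C)\|h/s\|_{L^{\frac{Q}{Q+r}}}.
\]
The quasi-norm $\|\cdot\|_{L^p}$ with $p=\frac{Q}{Q+r}$, namely $(\int h^p)^{1/p}$, is still positively $1$-homogeneous. Hence $s\|h/s\|_{L^p}=\|h\|_{L^p}$, which is exactly the claimed constant.

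The only point that needs care is the homogeneity just used. The statement of Lemma \ref{lem_basicpro}(v.B) is taken as given, but if one wanted to check it, it follows from (iv.A) for the $\liminf$ and from (iv.B) with the split $N_1=[(1-\ve)N]$, $N_2=N-N_1$ for the $\limsup$, before letting $\ve\to0$. I expect no real obstacle here.
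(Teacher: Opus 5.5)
Your proposal is correct and follows the paper's proof essentially verbatim. It uses the same case split on whether $\nu_{\mathrm{ac}}$ or $\nu_{\mathrm{s}}$ vanishes, the same normalization $\nu=s\mu_1+(1-s)\mu_2$, Lemmas \ref{lem_step3} and \ref{lem_step4} for the two pieces, Lemma \ref{lem_basicpro}(v.B) to combine them, and the $1$-homogeneity of the $L^{\frac{Q}{Q+r}}$ quasi-norm to identify the constant.
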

\begin{proof}
If $\nu_{\mathrm{ac}}(\G)=0$, the conclusion follows from Lemma \ref{lem_step4}. If $\nu_{\mathrm{s}}(\G)=0$, it follows from Lemma \ref{lem_step3}. Otherwise set
\[
a\ceq \nu_{\mathrm{ac}}(\G)\in(0,1),\qquad
\nu_1\ceq \frac{\nu_{\mathrm{ac}}}{a},\qquad
\nu_2\ceq \frac{\nu_{\mathrm{s}}}{1-a}.
\]
By Lemmas \ref{lem_step3} and \ref{lem_step4},
\[
\lim_{N\to+\infty}N^{\frac rQ}V_{N,r}(\nu_1)
=Q_r(C)\left\|\frac{h}{a}\right\|_{L^{\frac{Q}{Q+r}}},
\qquad
\lim_{N\to+\infty}N^{\frac rQ}V_{N,r}(\nu_2)=0.
\]
Since $\nu=a\nu_1+(1-a)\nu_2$, Lemma \ref{lem_basicpro}(v.B) gives
\[
\lim_{N\to+\infty}N^{\frac rQ}V_{N,r}(\nu)
=aQ_r(C)\left\|\frac{h}{a}\right\|_{L^{\frac{Q}{Q+r}}}
=Q_r(C)\|h\|_{L^{\frac{Q}{Q+r}}},
\]
where in the last equality we used homogeneity of the $L^{\frac{Q}{Q+r}}$ quasi-norm.
\end{proof}

\subsection{Zador's Theorem for probability measures in Carnot groups}
\label{subsec_zad4}

The following estimate (Lemma \ref{lem_stepaux}) is true for every probability measure with finite $r$-moment. A higher moment assumption is needed only for the corresponding $\limsup$ inequality, where Pierce's lemma controls the tail.

\begin{lem}\label{lem_stepaux}
Let $\mu=\mu_{\mathrm{ac}}+\mu_{\mathrm{s}}$ be the Lebesgue decomposition of $\mu$, and set $h\ceq d\mu_{\mathrm{ac}}/d\leb^n$. We have
    \[
    \liminf_{N \to +\infty}N^{\frac{r}{Q}}V_{N,r}(\mu) \geq Q_r(C) \nl h \nr_{L^{\frac{Q}{Q+r}}}.
    \]
\end{lem}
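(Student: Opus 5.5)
The idea is to reduce to the compactly supported case handled by Corollary \ref{cor_compact_zador}, by restricting $\mu$ to a large compact cube and using monotonicity of the quantization error under restriction. Fix $\lambda\in\N$ and consider the compact set $K_\lambda\ceq \overline{\delta_\lambda C}$. Write
\[
\mu=\mu(K_\lambda)\,\nu_\lambda+\bigl(1-\mu(K_\lambda)\bigr)\nu'_\lambda,\qquad \nu_\lambda\ceq \frac{\mu\res K_\lambda}{\mu(K_\lambda)},
\]
where $\nu'_\lambda$ is the normalized restriction to the complement. If $\mu(K_\lambda)=1$, the complementary term is simply absent. For $\lambda$ large we have $\mu(K_\lambda)>0$. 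By the concavity in Lemma \ref{lem_basicpro}(iv.A), or more directly since the integrand $\min_{a\in\alpha}d(a,y)^r$ is nonnegative, we get
\[
V_{N,r}(\mu)\geq \mu(K_\lambda)\,V_{N,r}(\nu_\lambda)
\]
for every $N$.

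Next, $\nu_\lambda$ is a compactly supported probability measure. Its absolutely continuous part has density $h\uno_{K_\lambda}/\mu(K_\lambda)$, because the Lebesgue decomposition commutes with restriction to a Borel set. Corollary \ref{cor_compact_zador} therefore gives
\[
\lim_{N\to+\infty}N^{\frac rQ}V_{N,r}(\nu_\lambda)=Q_r(C)\left\|\frac{h\uno_{K_\lambda}}{\mu(K_\lambda)}\right\|_{L^{\frac{Q}{Q+r}}}=\frac{Q_r(C)}{\mu(K_\lambda)}\|h\uno_{K_\lambda}\|_{L^{\frac{Q}{Q+r}}},
\]
where the last equality uses homogeneity of degree one of the quasi-norm. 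Multiplying by $\mu(K_\lambda)$ and taking the $\liminf$ in the inequality of the first step yields
\[
\liminf_{N\to+\infty}N^{\frac rQ}V_{N,r}(\mu)\geq Q_r(C)\,\|h\uno_{K_\lambda}\|_{L^{\frac{Q}{Q+r}}}.
\]

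Finally we let $\lambda\to+\infty$. The sets $\delta_\lambda C$ increase to $\G$: the exponential coordinates scale by positive powers of $\lambda$ and $C$ contains a neighbourhood of $0$. Hence, by monotone convergence applied to $h^{Q/(Q+r)}\uno_{K_\lambda}$, we have $\|h\uno_{K_\lambda}\|_{L^{Q/(Q+r)}}\to\|h\|_{L^{Q/(Q+r)}}$. If $\|h\|_{L^{Q/(Q+r)}}=+\infty$, the same argument shows that the $\liminf$ is $+\infty$, which is consistent with the statement.

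The only step needing care is checking that Corollary \ref{cor_compact_zador} indeed applies to $\nu_\lambda$ and correctly identifies the density of its absolutely continuous part. This holds since restriction preserves both absolute continuity and singularity. No moment condition beyond the standing finite $r$-moment is required, because we only use the lower bound on the compact piece and discard the tail.
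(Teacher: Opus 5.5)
Your proposal is correct and follows essentially the same route as the paper: restrict $\mu$ to the dilated cubes $\delta_k C$, use $V_{N,r}(\mu)\geq \mu(\delta_k C)\,V_{N,r}(\mu_k)$, apply Corollary \ref{cor_compact_zador} to the normalized restriction, and let $k\to+\infty$ by monotone convergence. Taking the closure $\overline{\delta_\lambda C}$ and treating the case $\|h\|_{L^{Q/(Q+r)}}=+\infty$ explicitly are harmless refinements of the same argument.
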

\begin{proof}
Define, for $k \in \N$, the dilated Euclidean cube $C_k \ceq \delta_k(C)$. Since $C_k\uparrow \G$, we have $\mu(C_k)>0$ for all sufficiently large $k$; for such $k$ set $\mu_k\ceq \frac{\mu\res C_k}{\mu(C_k)}$. By Corollary \ref{cor_compact_zador},
    \begin{equation}\label{eq_bbb}
    \lim_{N \to +\infty} N^{\frac{r}{Q}}V_{N,r}(\mu_k)=Q_r(C) \nl h_k \nr_{L^{\frac{Q}{Q+r}}}, 
    \end{equation}
    where 
    \[
   h_k \ceq \frac{d(\mu_k)_{\mathrm{ac}}}{d\leb^n}=\frac{\uno_{C_k}}{\mu(C_k)}h.
    \]
    By Lemma \ref{lem_basicpro}, if $\mu(C_k)<1$ we write $\mu=\mu(C_k)\mu_k+(1-\mu(C_k))\mu_k^c$ for a probability measure $\mu_k^c$, while if $\mu(C_k)=1$ the following inequality is immediate; we have $V_{N,r}(\mu) \geq \mu(C_k)V_{N,r}(\mu_k)$, hence
    \[
    \liminf_{N \to +\infty} N^{\frac{r}{Q}}V_{N,r}(\mu) \geq Q_r(C) \nl h \uno_{C_k} \nr_{L^{\frac{Q}{Q+r}}}.
    \]
    Letting $k \to +\infty$ and using the monotone convergence theorem is enough to conclude.
\end{proof}

We are finally ready to prove Theorem \ref{teo_intro}.
\begin{proof}[Proof of Theorem~\ref{teo_intro}]
 By Lemma \ref{lem_qpositivo}, $Q_r(C)>0$. Let $\mu=\mu_{\mathrm{ac}}+\mu_{\mathrm{s}}$ be the Lebesgue decomposition of $\mu$, and set $h\ceq d\mu_{\mathrm{ac}}/d\leb^n$. Lemma \ref{lem_stepaux} gives the desired $\liminf$ inequality. Define for $k \in \N$ the dilated Euclidean cube $C_k \ceq \delta_{k}(C)$. If $\mu(C_k^c)=0$ for some $k$, then $\mu$ is compactly supported and the conclusion follows from Corollary \ref{cor_compact_zador}. We may therefore assume that $\mu(C_k^c)>0$ for every $k$. For all sufficiently large $k$, also $\mu(C_k)>0$; for such $k$ define
    \[
    \mu_k \ceq \frac{\mu \res C_k}{\mu(C_k)}, \qquad
    \tilde{\mu}_k \ceq \frac{\mu \res C_k^c}{\mu(C_k^c)}.
    \]
    Moreover, for such $k$, we decompose $\mu$ as
    \[
\mu=\mu(C_k)\mu_k+\mu(C_k^c)\tilde{\mu}_k.  
    \]
    Fix $0<\ve<1$; from Corollary \ref{cor_compact_zador} applied to $\mu_k$, whose absolutely continuous density is $\uno_{C_k}h/\mu(C_k)$, and Lemma \ref{lem_basicpro} we obtain 
    \begin{equation}\label{eq_bbb1}
    \limsup_{N \to +\infty} N^{\frac{r}{Q}}V_{N,r}(\mu) \leq (1-\ve)^{-\frac{r}{Q}}Q_r(C) \nl h \uno_{C_k} \nr_{L^{\frac{Q}{Q+r}}}+\mu(C_k^c)\ve^{-\frac{r}{Q}}\limsup_{N \to +\infty}N^{\frac{r}{Q}}V_{N,r}(\tilde{\mu}_k).
    \end{equation}
    Thanks to Theorem \ref{teo_pierce}, we have
  \begin{equation}\label{eq_bbb2}
    \mu(C_k^c)N^{\frac{r}{Q}}V_{N,r}(\tilde{\mu}_k) \leq c  \left(\mu(C_k^c)+\int_{C_k^c} d(x,0)^{r+\delta}d\mu(x)\right)
    \end{equation}
    whenever $N \geq 2$. Since $\mu$ has finite $(r+\delta)$-moment, combining \eqref{eq_bbb1} and \eqref{eq_bbb2} and letting $k \to +\infty$ and then $\ve \to 0$ is enough to obtain
    \[
    \limsup_{N \to +\infty}N^{\frac{r}{Q}}V_{N,r}(\mu) \leq Q_r(C)\nl h \nr_{L^{\frac{Q}{Q+r}}}.
    \]
    Combining the latter with Lemma \ref{lem_stepaux} is enough to conclude.
\end{proof}

\subsection{Empirical measures of optimal centers}

We now prove the second main result. The argument combines the localized lower bound used above with the equality case in the allocation inequality.

\begin{proof}[Proof of Theorem~\ref{teo_empirical_centers}]
Let
\[
h\ceq \frac{d\mu_{\mathrm{ac}}}{d\leb^n},
\]
and set
\[
q\ceq \frac{Q}{Q+r},\qquad
Z_h\ceq \int_\G h^q\,d\leb^n .
\]
The equality $\#\alpha_N=N$ follows from Lemma \ref{lem_card}, since $\mu_{\mathrm{ac}}(\G)>0$ implies that the support of $\mu$ is infinite. Moreover $0<Z_h<+\infty$: the positivity follows from $\mu_{\mathrm{ac}}(\G)>0$, while the finiteness follows from H\"older's inequality, the finite $(r+\delta)$-moment assumption, and the volume growth of homogeneous balls. Indeed,
\[
\int_\G h^q\,d\leb^n
\leq
\left(\int_\G h(x)(1+d(x,0)^{r+\delta})\,d\leb^n(x)\right)^q
\left(\int_\G (1+d(x,0)^{r+\delta})^{-\frac{Q}{r}}\,d\leb^n(x)\right)^{1-q}
<+\infty .
\]
Let
\[
\eta\ceq \frac{h^q}{Z_h}\,\leb^n .
\]
We shall prove that $\nu_N(A)\to\eta(A)$ for every Borel set $A\subset\G$ such that $\eta(\partial A)=0$; this is enough to conclude by the standard characterization of weak convergence through continuity sets.

Fix such a set $A$. Let $(N_j)_j$ be a subsequence such that
\[
\nu_{N_j}(A)\to v\in[0,1].
\]
We first obtain the localized lower bound associated with this subsequence. Let $K$ and $L$ be compact sets compactly contained in $\operatorname{int}(A)$ and $\operatorname{int}(A^c)$, respectively. If $K=\emptyset$ we set $\Gamma_K=\emptyset$; otherwise, since $K$ is compactly contained in $A$, there exists a finite set $\Gamma_K\subset K$ such that
\[
d(x,\Gamma_K)\leq d(x,\G\setminus A)
\qquad\text{for every }x\in K.
\]
Similarly, if $L=\emptyset$ we set $\Gamma_L=\emptyset$; otherwise there exists a finite set $\Gamma_L\subset L$ such that
\[
d(x,\Gamma_L)\leq d(x,A)
\qquad\text{for every }x\in L.
\]
Set
\[
N_A(j)\ceq \#(\alpha_{N_j}\cap A),
\qquad
N_{A^c}(j)\ceq \#(\alpha_{N_j}\cap A^c).
\]
Then $N_A(j)/N_j\to v$ and $N_{A^c}(j)/N_j\to 1-v$. For $x\in K$, every center in $\alpha_{N_j}\setminus A$ is at distance at least $d(x,\G\setminus A)$, hence
\[
d(x,\alpha_{N_j})\geq d\bigl(x,(\alpha_{N_j}\cap A)\cup\Gamma_K\bigr).
\]
The analogous inequality holds on $L$, with $A^c$ and $\Gamma_L$ in place of $A$ and $\Gamma_K$. Therefore, writing $\mu_K\ceq\mu\res K/\mu(K)$ and $\mu_L\ceq\mu\res L/\mu(L)$ whenever the normalizing constants are positive, and understanding the corresponding term as zero when the normalizing constant is zero, we obtain
\begin{align*}
V_{N_j,r}(\mu)
&=\int_\G d(x,\alpha_{N_j})^r\,d\mu(x)\geq
\mu(K)V_{N_A(j)+\#\Gamma_K,r}(\mu_K)
+
\mu(L)V_{N_{A^c}(j)+\#\Gamma_L,r}(\mu_L).
\end{align*}
We now apply Lemma \ref{lem_stepaux} to $\mu_K$ and $\mu_L$; their absolutely continuous densities are \(h\uno_K/\mu(K)\) and \(h\uno_L/\mu(L)\), whenever the corresponding masses are positive. For example, if $\mu(K)>0$ and $v>0$, setting $m_j\ceq N_A(j)+\#\Gamma_K$, we have
\[
N_j^{\frac rQ}\mu(K)V_{m_j,r}(\mu_K)
=
\mu(K)\left(\frac{N_j}{m_j}\right)^{\frac rQ}
m_j^{\frac rQ}V_{m_j,r}(\mu_K),
\]
and therefore, since $m_j/N_j\to v$,
\[
\liminf_{j\to+\infty}N_j^{\frac rQ}\mu(K)V_{m_j,r}(\mu_K)
\geq
Q_r(C)\nl h\uno_K\nr_{L^q}v^{-\frac rQ}.
\]
Combining this estimate with the analogous estimate for the compact set $L$, and using the homogeneity of the $L^q$ quasi-norm after multiplying by the masses $\mu(K)$ and $\mu(L)$, gives
\[
\liminf_{j\to+\infty}N_j^{\frac rQ}V_{N_j,r}(\mu)
\geq
Q_r(C)\left(
\nl h\uno_K\nr_{L^q}v^{-\frac rQ}
+
\nl h\uno_L\nr_{L^q}(1-v)^{-\frac rQ}
\right).
\]

Since $\eta(\partial A)=0$, by the inner regularity of $h^q\leb^n$, we can take increasing compact exhaustions of $\operatorname{int}(A)$ and $\operatorname{int}(A^c)$ and, using monotone convergence, obtain
\[
\liminf_{j\to+\infty}N_j^{\frac rQ}V_{N_j,r}(\mu)
\geq
Q_r(C)\left(
\nl h\uno_A\nr_{L^q}v^{-\frac rQ}
+
\nl h\uno_{A^c}\nr_{L^q}(1-v)^{-\frac rQ}
\right).
\]
On the other hand, Theorem \ref{teo_intro} gives
\[
\lim_{j\to+\infty}N_j^{\frac rQ}V_{N_j,r}(\mu)
=Q_r(C)\nl h\nr_{L^q}.
\]
Since $Q_r(C)>0$, we infer
\[
\nl h\uno_A\nr_{L^q}v^{-\frac rQ}
+
\nl h\uno_{A^c}\nr_{L^q}(1-v)^{-\frac rQ}
\leq
\nl h\nr_{L^q}.
\]
If
\[
I_A\ceq\int_A h^q\,d\leb^n,
\qquad
I_{A^c}\ceq\int_{A^c}h^q\,d\leb^n,
\]
the latter inequality is
\[
I_A^{1/q}v^{-\frac rQ}
+
I_{A^c}^{1/q}(1-v)^{-\frac rQ}
\leq
(I_A+I_{A^c})^{1/q}.
\]
The reverse inequality follows from \cite[Lemma 6.8]{gl00}, equivalently from H\"older's inequality in the allocation form used in Lemma \ref{lem_step2}. More explicitly, if both \(I_A\) and \(I_{A^c}\) are positive and \(\alpha\ceq 1/q\), the function
\[
t\mapsto I_A^\alpha t^{1-\alpha}
+I_{A^c}^\alpha(1-t)^{1-\alpha},
\qquad t\in(0,1),
\]
has the unique minimum \((I_A+I_{A^c})^\alpha\) at \(t=I_A/(I_A+I_{A^c})\). Hence equality holds only when
\[
v=\frac{I_A}{I_A+I_{A^c}}=\eta(A),
\]
with the convention that, if $I_A=0$, the equality condition gives $v=0$, while if $I_{A^c}=0$, it gives $v=1$. Hence every convergent subsequence of $\nu_N(A)$ has limit $\eta(A)$, and therefore $\nu_N(A)\to\eta(A)$. This holds for every $\eta$-continuity set $A$, so $\nu_N\rightharpoonup\eta$.
\end{proof}

\bibliographystyle{acm}
\bibliography{qombib}

@book{gl00,
 author = {Graf, Siegfried and Luschgy, Harald},
 title = {Foundations of quantization for probability distributions},
 fseries = {Lecture Notes in Mathematics},
 series = {Lect. Notes Math.},
 issn = {0075-8434},
 volume = {1730},
 isbn = {3-540-67394-6},
 year = {2000},
 publisher = {Berlin: Springer},
 language = {English},
 doi = {10.1007/BFb0103945},
 zbMATH = {1437844},
 Zbl = {0951.60003}
}

@article {z82,
    AUTHOR = {Zador, Paul L.},
     TITLE = {Asymptotic quantization error of continuous signals and the
              quantization dimension},
   JOURNAL = {IEEE Trans. Inform. Theory},
  FJOURNAL = {Institute of Electrical and Electronics Engineers.
              Transactions on Information Theory},
    VOLUME = {28},
      YEAR = {1982},
    NUMBER = {2},
     PAGES = {139--149},
      ISSN = {0018-9448,1557-9654},
   MRCLASS = {94A05},
  MRNUMBER = {651809},
       DOI = {10.1109/TIT.1982.1056490},
       URL = {https://doi.org/10.1109/TIT.1982.1056490},
}

@article {bw82,
    AUTHOR = {Bucklew, James A. and Wise, Gary L.},
     TITLE = {Multidimensional asymptotic quantization theory with {$r$}th
              power distortion measures},
   JOURNAL = {IEEE Trans. Inform. Theory},
  FJOURNAL = {Institute of Electrical and Electronics Engineers.
              Transactions on Information Theory},
    VOLUME = {28},
      YEAR = {1982},
    NUMBER = {2},
     PAGES = {239--247},
      ISSN = {0018-9448,1557-9654},
   MRCLASS = {94A34 (60G35)},
  MRNUMBER = {651819},
       DOI = {10.1109/TIT.1982.1056486},
       URL = {https://doi.org/10.1109/TIT.1982.1056486},
}

@article {g04,
    AUTHOR = {Gruber, Peter M.},
     TITLE = {Optimum quantization and its applications},
   JOURNAL = {Adv. Math.},
  FJOURNAL = {Advances in Mathematics},
    VOLUME = {186},
      YEAR = {2004},
    NUMBER = {2},
     PAGES = {456--497},
      ISSN = {0001-8708,1090-2082},
   MRCLASS = {94A24 (52A27 52B60)},
  MRNUMBER = {2073915},
MRREVIEWER = {Ahmed\ I.\ Zayed},
       DOI = {10.1016/j.aim.2003.07.017},
       URL = {https://doi.org/10.1016/j.aim.2003.07.017},
}

@article {k12,
    AUTHOR = {Kloeckner, Beno\^it},
     TITLE = {Approximation by finitely supported measures},
   JOURNAL = {ESAIM Control Optim. Calc. Var.},
  FJOURNAL = {ESAIM. Control, Optimisation and Calculus of Variations},
    VOLUME = {18},
      YEAR = {2012},
    NUMBER = {2},
     PAGES = {343--359},
      ISSN = {1292-8119,1262-3377},
   MRCLASS = {49Q20 (41A60 60B05)},
  MRNUMBER = {2954629},
MRREVIEWER = {Berardo\ Ruffini},
       DOI = {10.1051/cocv/2010100},
       URL = {https://doi.org/10.1051/cocv/2010100},
}

@article {i16,
    AUTHOR = {Iacobelli, Mikaela},
     TITLE = {Asymptotic quantization for probability measures on
              {R}iemannian manifolds},
   JOURNAL = {ESAIM Control Optim. Calc. Var.},
  FJOURNAL = {ESAIM. Control, Optimisation and Calculus of Variations},
    VOLUME = {22},
      YEAR = {2016},
    NUMBER = {3},
     PAGES = {770--785},
      ISSN = {1292-8119,1262-3377},
   MRCLASS = {60D99 (49Q20)},
  MRNUMBER = {3527943},
       DOI = {10.1051/cocv/2015025},
       URL = {https://doi.org/10.1051/cocv/2015025},
}

@article {ai25,
    AUTHOR = {Ayd{\i}n, Ata Deniz and Iacobelli, Mikaela},
     TITLE = {Asymptotic quantization of measures on {R}iemannian manifolds
              via covering growth estimates},
   JOURNAL = {Adv. Math.},
  FJOURNAL = {Advances in Mathematics},
    VOLUME = {474},
      YEAR = {2025},
     PAGES = {Paper No. 110311, 43},
      ISSN = {0001-8708,1090-2082},
   MRCLASS = {53D50 (46T12 49Q22 53C23)},
  MRNUMBER = {4902658},
       DOI = {10.1016/j.aim.2025.110311},
       URL = {https://doi.org/10.1016/j.aim.2025.110311},
}

@article {a25,
    AUTHOR = {Ayd{\i}n, Ata Deniz},
     TITLE = {Asymptotics of the quantization problem on metric measure
              spaces},
   JOURNAL = {Math. Ann.},
  FJOURNAL = {Mathematische Annalen},
    VOLUME = {394},
      YEAR = {2026},
    NUMBER = {2},
     PAGES = {48},
      ISSN = {0025-5831,1432-1807},
   MRCLASS = {99-06},
  MRNUMBER = {5031624},
       DOI = {10.1007/s00208-026-03376-x},
       URL = {https://doi.org/10.1007/s00208-026-03376-x},
}

@article {p70,
    AUTHOR = {Pierce, John N.},
     TITLE = {Asymptotic quantizing error for unbounded random variables},
   JOURNAL = {IEEE Trans. Information Theory},
  FJOURNAL = {Institute of Electrical and Electronics Engineers.
              Transactions on Information Theory},
    VOLUME = {IT-16},
      YEAR = {1970},
     PAGES = {81--83},
      ISSN = {0018-9448,1557-9654},
   MRCLASS = {60.30},
  MRNUMBER = {272030},
       DOI = {10.1109/tit.1970.1054400},
       URL = {https://doi.org/10.1109/tit.1970.1054400},
}

@article {bpt14,
    AUTHOR = {Bourne, D. P. and Peletier, M. A. and Theil, F.},
     TITLE = {Optimality of the triangular lattice for a particle system
              with {W}asserstein interaction},
   JOURNAL = {Comm. Math. Phys.},
  FJOURNAL = {Communications in Mathematical Physics},
    VOLUME = {329},
      YEAR = {2014},
    NUMBER = {1},
     PAGES = {117--140},
      ISSN = {0010-3616,1432-0916},
   MRCLASS = {82D25 (82D60)},
  MRNUMBER = {3206999},
       DOI = {10.1007/s00220-014-1965-5},
       URL = {https://doi.org/10.1007/s00220-014-1965-5},
}

@article {bs09,
    AUTHOR = {Buttazzo, Giuseppe and Santambrogio, Filippo},
     TITLE = {A mass transportation model for the optimal planning of an
              urban region},
   JOURNAL = {SIAM Rev.},
  FJOURNAL = {SIAM Review},
    VOLUME = {51},
      YEAR = {2009},
    NUMBER = {3},
     PAGES = {593--610},
      ISSN = {1095-7200,0036-1445},
   MRCLASS = {49Q20 (49J45 90B06)},
  MRNUMBER = {2535060},
MRREVIEWER = {Luigi\ De Pascale},
       DOI = {10.1137/090759197},
       URL = {https://doi.org/10.1137/090759197},
}

@article {bjm11,
    AUTHOR = {Bouchitt\'e, Guy and Jimenez, Chlo\'e{} and Mahadevan, Rajesh},
     TITLE = {Asymptotic analysis of a class of optimal location problems},
   JOURNAL = {J. Math. Pures Appl. (9)},
  FJOURNAL = {Journal de Math\'ematiques Pures et Appliqu\'ees. Neuvi\`eme
              S\'erie},
    VOLUME = {95},
      YEAR = {2011},
    NUMBER = {4},
     PAGES = {382--419},
      ISSN = {0021-7824,1776-3371},
   MRCLASS = {49Q20 (49J45)},
  MRNUMBER = {2776375},
MRREVIEWER = {Giandomenico\ Orlandi},
       DOI = {10.1016/j.matpur.2010.10.009},
       URL = {https://doi.org/10.1016/j.matpur.2010.10.009},
}

@book {s15,
    AUTHOR = {Santambrogio, Filippo},
     TITLE = {Optimal transport for applied mathematicians},
    SERIES = {Progress in Nonlinear Differential Equations and their
              Applications},
    VOLUME = {87},
      NOTE = {Calculus of variations, PDEs, and modeling},
 PUBLISHER = {Birkh\"auser/Springer, Cham},
      YEAR = {2015},
     PAGES = {xxvii+353},
      ISBN = {978-3-319-20827-5; 978-3-319-20828-2},
   MRCLASS = {49-02 (35J96 49J45 49M29 58E50 90C05 90C48 91B02)},
  MRNUMBER = {3409718},
MRREVIEWER = {Luigi\ De Pascale},
       DOI = {10.1007/978-3-319-20828-2},
       URL = {https://doi.org/10.1007/978-3-319-20828-2},
}

@article {fssc03,
    AUTHOR = {Franchi, Bruno and Serapioni, Raul and Serra Cassano,
              Francesco},
     TITLE = {On the structure of finite perimeter sets in step 2 {C}arnot
              groups},
   JOURNAL = {J. Geom. Anal.},
  FJOURNAL = {The Journal of Geometric Analysis},
    VOLUME = {13},
      YEAR = {2003},
    NUMBER = {3},
     PAGES = {421--466},
      ISSN = {1050-6926,1559-002X},
   MRCLASS = {49Q15 (53C17)},
  MRNUMBER = {1984849},
MRREVIEWER = {J.\ E.\ Brothers},
       DOI = {10.1007/BF02922053},
       URL = {https://doi.org/10.1007/BF02922053},
}

@book {abb,
    AUTHOR = {Agrachev, Andrei and Barilari, Davide and Boscain, Ugo},
     TITLE = {A comprehensive introduction to sub-{R}iemannian geometry},
    SERIES = {Cambridge Studies in Advanced Mathematics},
    VOLUME = {181},
      NOTE = {From the Hamiltonian viewpoint,
              With an appendix by Igor Zelenko},
 PUBLISHER = {Cambridge University Press, Cambridge},
      YEAR = {2020},
     PAGES = {xviii+745},
      ISBN = {978-1-108-47635-5},
   MRCLASS = {53C17},
  MRNUMBER = {3971262},
MRREVIEWER = {Luca Rizzi},
}

@book {montgomery,
    AUTHOR = {Montgomery, Richard},
     TITLE = {A tour of subriemannian geometries, their geodesics and
              applications},
    SERIES = {Mathematical Surveys and Monographs},
    VOLUME = {91},
 PUBLISHER = {American Mathematical Society, Providence, RI},
      YEAR = {2002},
     PAGES = {xx+259},
      ISBN = {0-8218-1391-9},
   MRCLASS = {53C17 (37J99 53C60 58E10 70G45 70H05)},
  MRNUMBER = {1867362},
MRREVIEWER = {Andrey\ V.\ Sarychev},
       DOI = {10.1090/surv/091},
       URL = {https://doi.org/10.1090/surv/091},
}

@book {cdptbook,
    AUTHOR = {Capogna, Luca and Danielli, Donatella and Pauls, Scott D. and
              Tyson, Jeremy T.},
     TITLE = {An introduction to the {H}eisenberg group and the
              sub-{R}iemannian isoperimetric problem},
    SERIES = {Progress in Mathematics},
    VOLUME = {259},
 PUBLISHER = {Birkh\"auser Verlag, Basel},
      YEAR = {2007},
     PAGES = {xvi+223},
      ISBN = {978-3-7643-8132-5; 3-7643-8132-9},
   MRCLASS = {53C17 (22E30 30C65 32T27 32V15 49Q15)},
  MRNUMBER = {2312336},
MRREVIEWER = {Piotr\ Haj\l asz},
       DOI = {10.1007/978-3-7643-8133-2},
       URL = {https://doi.org/10.1007/978-3-7643-8133-2},
}

@book{LDlibro,
 author = {Le Donne, Enrico},
 title = {Metric {Lie} groups. {Carnot}-{Carath{\'e}odory} spaces from the homogeneous viewpoint},
 fseries = {Graduate Texts in Mathematics},
 series = {Grad. Texts Math.},
 issn = {0072-5285},
 volume = {306},
 isbn = {978-3-031-98831-8; 978-3-031-98834-9; 978-3-031-98832-5},
 year = {2025},
 publisher = {Cham: Springer},
 language = {English},
 zbMATH = {8074076}
}

@article {ld17,
    AUTHOR = {Le Donne, Enrico},
     TITLE = {A primer on {C}arnot groups: homogenous groups,
              {C}arnot-{C}arath\'eodory spaces, and regularity of their
              isometries},
   JOURNAL = {Anal. Geom. Metr. Spaces},
  FJOURNAL = {Analysis and Geometry in Metric Spaces},
    VOLUME = {5},
      YEAR = {2017},
    NUMBER = {1},
     PAGES = {116--137},
      ISSN = {2299-3274},
   MRCLASS = {53C17 (22E25 22F30 43A80)},
  MRNUMBER = {3742567},
MRREVIEWER = {Andrea\ Pinamonti},
       DOI = {10.1515/agms-2017-0007},
       URL = {https://doi.org/10.1515/agms-2017-0007},
}

@article {b48,
    AUTHOR = {Bennett, W. R.},
     TITLE = {Spectra of quantized signals},
   JOURNAL = {Bell System Tech. J.},
  FJOURNAL = {The Bell System Technical Journal},
    VOLUME = {27},
      YEAR = {1948},
     PAGES = {446--472},
      ISSN = {0005-8580},
   MRCLASS = {60.0X},
  MRNUMBER = {26287},
MRREVIEWER = {J.\ L.\ Doob},
       DOI = {10.1002/j.1538-7305.1948.tb01340.x},
       URL = {https://doi.org/10.1002/j.1538-7305.1948.tb01340.x},
}

@article {ops48,
       author = {{Oliver}, B.~M. and {Pierce}, J.~R. and {Shannon}, C.~E.},
        title = "{The Philosophy of PCM}",
      journal = {Proceedings of the IRE},
         year = {1948},
        month = {nov},
       volume = {36},
       number = {11},
        pages = {1324-1331},
          doi = {10.1109/JRPROC.1948.231941},
       adsurl = {https://ui.adsabs.harvard.edu/abs/1948PIRE...36.1324O}
}

@article {g79,
    AUTHOR = {Gersho, Allen},
     TITLE = {Asymptotically optimal block quantization},
   JOURNAL = {IEEE Trans. Inform. Theory},
  FJOURNAL = {Institute of Electrical and Electronics Engineers.
              Transactions on Information Theory},
    VOLUME = {25},
      YEAR = {1979},
    NUMBER = {4},
     PAGES = {373--380},
      ISSN = {0018-9448,1557-9654},
   MRCLASS = {94A34},
  MRNUMBER = {536229},
       DOI = {10.1109/TIT.1979.1056067},
       URL = {https://doi.org/10.1109/TIT.1979.1056067},
}

@book{gg92,
 author = {Gersho, Allen and Gray, Robert M.},
 title = {Vector quantization and signal compression},
 isbn = {0-7923-9181-0},
 year = {1992},
 publisher = {Boston, MA: Kluwer Academic Publishers},
 language = {English},
 zbMATH = {467196},
 Zbl = {0782.94001}
}

@article{gn02,
  title={Quantization},
  author={Gray, Robert M. and Neuhoff, David L.},
  journal={IEEE transactions on information theory},
  volume={44},
  number={6},
  pages={2325--2383},
  year={2002},
  publisher={IEEE}
}

@incollection {g15,
    AUTHOR = {Pag\`es, Gilles},
     TITLE = {Introduction to vector quantization and its applications for
              numerics},
 BOOKTITLE = {C{EMRACS} 2013---modelling and simulation of complex systems:
              stochastic and deterministic approaches},
    SERIES = {ESAIM Proc. Surveys},
    VOLUME = {48},
     PAGES = {29--79},
 PUBLISHER = {EDP Sci., Les Ulis},
      YEAR = {2015},
   MRCLASS = {60G35 (62E17 65C05 81S05 94A15)},
  MRNUMBER = {3415387},
MRREVIEWER = {Ramon\ van Handel},
       DOI = {10.1051/proc/201448002},
       URL = {https://doi.org/10.1051/proc/201448002},
}

@article {bc21,
    AUTHOR = {Bourne, David P. and Cristoferi, Riccardo},
     TITLE = {Asymptotic optimality of the triangular lattice for a class of
              optimal location problems},
   JOURNAL = {Comm. Math. Phys.},
  FJOURNAL = {Communications in Mathematical Physics},
    VOLUME = {387},
      YEAR = {2021},
    NUMBER = {3},
     PAGES = {1549--1602},
      ISSN = {0010-3616,1432-0916},
   MRCLASS = {49Q20 (82B10 90B80)},
  MRNUMBER = {4324385},
       DOI = {10.1007/s00220-021-04216-6},
       URL = {https://doi.org/10.1007/s00220-021-04216-6},
}

@article {cgi15,
    AUTHOR = {Caglioti, Emanuele and Golse, Fran\c cois and Iacobelli,
              Mikaela},
     TITLE = {A gradient flow approach to quantization of measures},
   JOURNAL = {Math. Models Methods Appl. Sci.},
  FJOURNAL = {Mathematical Models and Methods in Applied Sciences},
    VOLUME = {25},
      YEAR = {2015},
    NUMBER = {10},
     PAGES = {1845--1885},
      ISSN = {0218-2025,1793-6314},
   MRCLASS = {35K59 (35B40 35K92 94A12)},
  MRNUMBER = {3358447},
MRREVIEWER = {Xiaoliu\ Wang},
       DOI = {10.1142/S0218202515500475},
       URL = {https://doi.org/10.1142/S0218202515500475},
}

@article {cgi18,
    AUTHOR = {Caglioti, Emanuele and Golse, Fran\c cois and Iacobelli,
              Mikaela},
     TITLE = {Quantization of probability distributions and gradient flows
              in space dimension 2},
   JOURNAL = {Ann. Inst. H. Poincar\'e{} C Anal. Non Lin\'eaire},
  FJOURNAL = {Annales de l'Institut Henri Poincar\'e{} C. Analyse Non
              Lin\'eaire},
    VOLUME = {35},
      YEAR = {2018},
    NUMBER = {6},
     PAGES = {1531--1555},
      ISSN = {0294-1449,1873-1430},
   MRCLASS = {35K40 (35B40 35K59 35K92 35Q94 94A12)},
  MRNUMBER = {3846235},
       DOI = {10.1016/j.anihpc.2017.12.003},
       URL = {https://doi.org/10.1016/j.anihpc.2017.12.003},
}

@incollection {i18,
    AUTHOR = {Iacobelli, Mikaela},
     TITLE = {A gradient flow perspective on the quantization problem},
 BOOKTITLE = {P{DE} models for multi-agent phenomena},
    SERIES = {Springer INdAM Ser.},
    VOLUME = {28},
     PAGES = {145--165},
 PUBLISHER = {Springer, Cham},
      YEAR = {2018},
      ISBN = {978-3-030-01946-4; 978-3-030-01947-1},
   MRCLASS = {28A33 (53D25 81P15)},
  MRNUMBER = {3888971},
}

@article {i19,
    AUTHOR = {Iacobelli, Mikaela},
     TITLE = {Asymptotic analysis for a very fast diffusion equation arising
              from the 1{D} quantization problem},
   JOURNAL = {Discrete Contin. Dyn. Syst.},
  FJOURNAL = {Discrete and Continuous Dynamical Systems},
    VOLUME = {39},
      YEAR = {2019},
    NUMBER = {9},
     PAGES = {4929--4943},
      ISSN = {1078-0947,1553-5231},
   MRCLASS = {35K57 (35B40 35K65 70F45)},
  MRNUMBER = {3986316},
MRREVIEWER = {Hiroki\ Hoshino},
       DOI = {10.3934/dcds.2019201},
       URL = {https://doi.org/10.3934/dcds.2019201},
}

@article {ips19,
    AUTHOR = {Iacobelli, Mikaela and Patacchini, Francesco S. and
              Santambrogio, Filippo},
     TITLE = {Weighted ultrafast diffusion equations: from well-posedness to
              long-time behaviour},
   JOURNAL = {Arch. Ration. Mech. Anal.},
  FJOURNAL = {Archive for Rational Mechanics and Analysis},
    VOLUME = {232},
      YEAR = {2019},
    NUMBER = {3},
     PAGES = {1165--1206},
      ISSN = {0003-9527,1432-0673},
   MRCLASS = {35K59 (35B30 35B40)},
  MRNUMBER = {3928748},
MRREVIEWER = {Hongwei\ Chen},
       DOI = {10.1007/s00205-018-01341-w},
       URL = {https://doi.org/10.1007/s00205-018-01341-w},
}

@article {bjr02,
    AUTHOR = {Bouchitt\'e, Guy and Jimenez, Chlo\'e{} and Rajesh, Mahadevan},
     TITLE = {Asymptotique d'un probl\`eme de positionnement optimal},
   JOURNAL = {C. R. Math. Acad. Sci. Paris},
  FJOURNAL = {Comptes Rendus Math\'ematique. Acad\'emie des Sciences. Paris},
    VOLUME = {335},
      YEAR = {2002},
    NUMBER = {10},
     PAGES = {853--858},
      ISSN = {1631-073X,1778-3569},
   MRCLASS = {49J45 (90B30 90B85)},
  MRNUMBER = {1947712},
       DOI = {10.1016/S1631-073X(02)02575-X},
       URL = {https://doi.org/10.1016/S1631-073X(02)02575-X},
}

@article {br15,
    AUTHOR = {Bourne, D. P. and Roper, S. M.},
     TITLE = {Centroidal power diagrams, {L}loyd's algorithm, and
              applications to optimal location problems},
   JOURNAL = {SIAM J. Numer. Anal.},
  FJOURNAL = {SIAM Journal on Numerical Analysis},
    VOLUME = {53},
      YEAR = {2015},
    NUMBER = {6},
     PAGES = {2545--2569},
      ISSN = {0036-1429,1095-7170},
   MRCLASS = {65K10 (49M05 65D99 90B85)},
  MRNUMBER = {3419889},
MRREVIEWER = {Gerd\ Wachsmuth},
       DOI = {10.1137/141000993},
       URL = {https://doi.org/10.1137/141000993},
}

@article {qvg99,
    AUTHOR = {Du, Qiang and Faber, Vance and Gunzburger, Max},
     TITLE = {Centroidal {V}oronoi tessellations: applications and
              algorithms},
   JOURNAL = {SIAM Rev.},
  FJOURNAL = {SIAM Review},
    VOLUME = {41},
      YEAR = {1999},
    NUMBER = {4},
     PAGES = {637--676},
      ISSN = {1095-7200,0036-1445},
   MRCLASS = {52B55 (52C22 65D30 68U05)},
  MRNUMBER = {1722997},
MRREVIEWER = {Ren\ Ding},
       DOI = {10.1137/S0036144599352836},
       URL = {https://doi.org/10.1137/S0036144599352836},
}

@article {qw05,
    AUTHOR = {Du, Qiang and Wang, Desheng},
     TITLE = {The optimal centroidal {V}oronoi tessellations and the
              {G}ersho's conjecture in the three-dimensional space},
   JOURNAL = {Comput. Math. Appl.},
  FJOURNAL = {Computers \& Mathematics with Applications. An International
              Journal},
    VOLUME = {49},
      YEAR = {2005},
    NUMBER = {9-10},
     PAGES = {1355--1373},
      ISSN = {0898-1221,1873-7668},
   MRCLASS = {65D18 (52B55 52C22 68U07 94A20)},
  MRNUMBER = {2149486},
MRREVIEWER = {H.\ W.\ Guggenheimer},
       DOI = {10.1016/j.camwa.2004.12.008},
       URL = {https://doi.org/10.1016/j.camwa.2004.12.008},
}

@incollection {m67,
    AUTHOR = {MacQueen, J.},
     TITLE = {Some methods for classification and analysis of multivariate
              observations},
 BOOKTITLE = {Proc. {F}ifth {B}erkeley {S}ympos. {M}ath. {S}tatist. and
              {P}robability ({B}erkeley, {C}alif., 1965/66), {V}ol. {I}:
              {S}tatistics},
     PAGES = {281--297},
 PUBLISHER = {Univ. California Press, Berkeley, CA},
      YEAR = {1967},
   MRCLASS = {62.40},
  MRNUMBER = {214227},
MRREVIEWER = {K.\ G.\ J\"oreskog},
}

@article {c18,
    AUTHOR = {Chevallier, Julien},
     TITLE = {Uniform decomposition of probability measures: quantization,
              clustering and rate of convergence},
   JOURNAL = {J. Appl. Probab.},
  FJOURNAL = {Journal of Applied Probability},
    VOLUME = {55},
      YEAR = {2018},
    NUMBER = {4},
     PAGES = {1037--1045},
      ISSN = {0021-9002,1475-6072},
   MRCLASS = {60E15 (60B10 60F99 62E17)},
  MRNUMBER = {3899926},
       DOI = {10.1017/jpr.2018.69},
       URL = {https://doi.org/10.1017/jpr.2018.69},
}

@article {k20,
    AUTHOR = {Kloeckner, Beno\^it R.},
     TITLE = {Empirical measures: regularity is a counter-curse to
              dimensionality},
   JOURNAL = {ESAIM Probab. Stat.},
  FJOURNAL = {ESAIM. Probability and Statistics},
    VOLUME = {24},
      YEAR = {2020},
     PAGES = {408--434},
      ISSN = {1292-8100,1262-3318},
   MRCLASS = {60B10 (49Q20 60J05 62E17)},
  MRNUMBER = {4153634},
       DOI = {10.1051/ps/2019025},
       URL = {https://doi.org/10.1051/ps/2019025},
}

@inproceedings{mss21,
  title={Non-asymptotic convergence bounds for Wasserstein approximation using point clouds},
  author={Quentin M{\'e}rigot and Filippo Santambrogio and Cl{\'e}ment Sarrazin},
  booktitle={Neural Information Processing Systems},
  year={2021},
  url={https://api.semanticscholar.org/CorpusID:235436063}
}

@article {hs82,
    AUTHOR = {Hochbaum, Dorit and Steele, J. Michael},
     TITLE = {Steinhaus's geometric location problem for random samples in
              the plane},
   JOURNAL = {Adv. in Appl. Probab.},
  FJOURNAL = {Advances in Applied Probability},
    VOLUME = {14},
      YEAR = {1982},
    NUMBER = {1},
     PAGES = {56--67},
      ISSN = {0001-8678,1475-6064},
   MRCLASS = {60D05 (52A40 90B05)},
  MRNUMBER = {644008},
MRREVIEWER = {Noel\ Cressie},
       DOI = {10.2307/1426733},
       URL = {https://doi.org/10.2307/1426733},
}

@article {z85,
    AUTHOR = {Zemel, Eitan},
     TITLE = {Probabilistic analysis of geometric location problems},
   JOURNAL = {SIAM J. Algebraic Discrete Methods},
  FJOURNAL = {Society for Industrial and Applied Mathematics. Journal on
              Algebraic and Discrete Methods},
    VOLUME = {6},
      YEAR = {1985},
    NUMBER = {2},
     PAGES = {189--200},
      ISSN = {0196-5212},
   MRCLASS = {90B10 (68Q25)},
  MRNUMBER = {777998},
       DOI = {10.1137/0606017},
       URL = {https://doi.org/10.1137/0606017},
}

@article {gl02,
    AUTHOR = {Graf, Siegfried and Luschgy, Harald},
     TITLE = {Rates of convergence for the empirical quantization error},
   JOURNAL = {Ann. Probab.},
  FJOURNAL = {The Annals of Probability},
    VOLUME = {30},
      YEAR = {2002},
    NUMBER = {2},
     PAGES = {874--897},
      ISSN = {0091-1798,2168-894X},
   MRCLASS = {60F15 (60E15 62H30 94A29)},
  MRNUMBER = {1905859},
MRREVIEWER = {Ludger\ R\"uschendorf},
       DOI = {10.1214/aop/1023481010},
       URL = {https://doi.org/10.1214/aop/1023481010},
}

@article {dss13,
    AUTHOR = {Dereich, Steffen and Scheutzow, Michael and Schottstedt, Reik},
     TITLE = {Constructive quantization: approximation by empirical
              measures},
   JOURNAL = {Ann. Inst. Henri Poincar\'e{} Probab. Stat.},
  FJOURNAL = {Annales de l'Institut Henri Poincar\'e{} Probabilit\'es et
              Statistiques},
    VOLUME = {49},
      YEAR = {2013},
    NUMBER = {4},
     PAGES = {1183--1203},
      ISSN = {0246-0203,1778-7017},
   MRCLASS = {60F25 (60B10 65C50 68P30 94A12)},
  MRNUMBER = {3127919},
MRREVIEWER = {A.\ F.\ Gualtierotti},
       DOI = {10.1214/12-AIHP489},
       URL = {https://doi.org/10.1214/12-AIHP489},
}

@article {ts15,
    AUTHOR = {Garc\'ia Trillos, Nicol\'as and Slep\v{c}ev, Dejan},
     TITLE = {On the rate of convergence of empirical measures in
              {$\infty$}-transportation distance},
   JOURNAL = {Canad. J. Math.},
  FJOURNAL = {Canadian Journal of Mathematics. Journal Canadien de
              Math\'ematiques},
    VOLUME = {67},
      YEAR = {2015},
    NUMBER = {6},
     PAGES = {1358--1383},
      ISSN = {0008-414X,1496-4279},
   MRCLASS = {60B10 (05C70 05D40 60D05)},
  MRNUMBER = {3415656},
MRREVIEWER = {Hac\`ene\ Djellout},
       DOI = {10.4153/CJM-2014-044-6},
       URL = {https://doi.org/10.4153/CJM-2014-044-6},
}

@article {ag19,
    AUTHOR = {Ambrosio, Luigi and Glaudo, Federico},
     TITLE = {Finer estimates on the 2-dimensional matching problem},
   JOURNAL = {J. \'Ec. polytech. Math.},
  FJOURNAL = {Journal de l'\'Ecole polytechnique. Math\'ematiques},
    VOLUME = {6},
      YEAR = {2019},
     PAGES = {737--765},
      ISSN = {2429-7100,2270-518X},
   MRCLASS = {60D05 (49J55 49Q20 60H15)},
  MRNUMBER = {4014635},
       DOI = {10.5802/jep.105},
       URL = {https://doi.org/10.5802/jep.105},
}

@article {agt19,
    AUTHOR = {Ambrosio, Luigi and Glaudo, Federico and Trevisan, Dario},
     TITLE = {On the optimal map in the 2-dimensional random matching
              problem},
   JOURNAL = {Discrete Contin. Dyn. Syst.},
  FJOURNAL = {Discrete and Continuous Dynamical Systems},
    VOLUME = {39},
      YEAR = {2019},
    NUMBER = {12},
     PAGES = {7291--7308},
      ISSN = {1078-0947,1553-5231},
   MRCLASS = {60D05 (35B35 35F21 49J55 49Q20 58J35)},
  MRNUMBER = {4026190},
       DOI = {10.3934/dcds.2019304},
       URL = {https://doi.org/10.3934/dcds.2019304},
}

@article {ast19,
    AUTHOR = {Ambrosio, Luigi and Stra, Federico and Trevisan, Dario},
     TITLE = {A {PDE} approach to a 2-dimensional matching problem},
   JOURNAL = {Probab. Theory Related Fields},
  FJOURNAL = {Probability Theory and Related Fields},
    VOLUME = {173},
      YEAR = {2019},
    NUMBER = {1-2},
     PAGES = {433--477},
      ISSN = {0178-8051,1432-2064},
   MRCLASS = {60D05 (49J55 60H15)},
  MRNUMBER = {3916111},
       DOI = {10.1007/s00440-018-0837-x},
       URL = {https://doi.org/10.1007/s00440-018-0837-x},
}

@article {bc20,
    AUTHOR = {Benedetto, Dario and Caglioti, Emanuele},
     TITLE = {Euclidean random matching in 2{D} for non-constant densities},
   JOURNAL = {J. Stat. Phys.},
  FJOURNAL = {Journal of Statistical Physics},
    VOLUME = {181},
      YEAR = {2020},
    NUMBER = {3},
     PAGES = {854--869},
      ISSN = {0022-4715,1572-9613},
   MRCLASS = {60D05 (82B44)},
  MRNUMBER = {4160913},
MRREVIEWER = {Dominique\ Jeulin},
       DOI = {10.1007/s10955-020-02608-x},
       URL = {https://doi.org/10.1007/s10955-020-02608-x},
}

@article {bccdss21,
    AUTHOR = {Benedetto, D. and Caglioti, E. and Caracciolo, S. and
              D'Achille, M. and Sicuro, G. and Sportiello, A.},
     TITLE = {Random assignment problems on {$2d$} manifolds},
   JOURNAL = {J. Stat. Phys.},
  FJOURNAL = {Journal of Statistical Physics},
    VOLUME = {183},
      YEAR = {2021},
    NUMBER = {2},
     PAGES = {Paper No. 34, 40},
      ISSN = {0022-4715,1572-9613},
   MRCLASS = {49Q22 (49J55 58J50 60G99)},
  MRNUMBER = {4257835},
       DOI = {10.1007/s10955-021-02768-4},
       URL = {https://doi.org/10.1007/s10955-021-02768-4},
}

@article {g01,
    AUTHOR = {Gruber, Peter M.},
     TITLE = {Optimal configurations of finite sets in {R}iemannian
              2-manifolds},
   JOURNAL = {Geom. Dedicata},
  FJOURNAL = {Geometriae Dedicata},
    VOLUME = {84},
      YEAR = {2001},
    NUMBER = {1-3},
     PAGES = {271--320},
      ISSN = {0046-5755,1572-9168},
   MRCLASS = {52C05 (52A10 52B10)},
  MRNUMBER = {1825361},
MRREVIEWER = {Alan\ S.\ McRae},
       DOI = {10.1023/A:1010358407868},
       URL = {https://doi.org/10.1023/A:1010358407868},
}

@article {lbp19,
    AUTHOR = {Le Brigant, Alice and Puechmorel, St\'ephane},
     TITLE = {Quantization and clustering on {R}iemannian manifolds with an
              application to air traffic analysis},
   JOURNAL = {J. Multivariate Anal.},
  FJOURNAL = {Journal of Multivariate Analysis},
    VOLUME = {173},
      YEAR = {2019},
     PAGES = {685--703},
      ISSN = {0047-259X,1095-7243},
   MRCLASS = {62H30 (53Z05 62H35 62P30)},
  MRNUMBER = {3959747},
       DOI = {10.1016/j.jmva.2019.05.008},
       URL = {https://doi.org/10.1016/j.jmva.2019.05.008},
}

@article{lbp19bis,
AUTHOR = {Le Brigant, Alice and Puechmorel, Stéphane},
TITLE = {Approximation of Densities on Riemannian Manifolds},
JOURNAL = {Entropy},
VOLUME = {21},
YEAR = {2019},
NUMBER = {1},
ARTICLE-NUMBER = {43},
URL = {https://www.mdpi.com/1099-4300/21/1/43},
PubMedID = {33266759},
ISSN = {1099-4300},
DOI = {10.3390/e21010043}
}

@article {scheffe,
    AUTHOR = {Scheff\'e, Henry},
     TITLE = {A useful convergence theorem for probability distributions},
   JOURNAL = {Ann. Math. Statistics},
  FJOURNAL = {Annals of Mathematical Statistics},
    VOLUME = {18},
      YEAR = {1947},
     PAGES = {434--438},
      ISSN = {0003-4851},
   MRCLASS = {27.2X},
  MRNUMBER = {21585},
MRREVIEWER = {R.\ Fortet},
       DOI = {10.1214/aoms/1177730390},
       URL = {https://doi.org/10.1214/aoms/1177730390},
}

@article {fgn05,
    AUTHOR = {Franchi, Bruno and Guti\'errez, Cristian E. and van Nguyen,
              Truyen},
     TITLE = {Homogenization and convergence of correctors in {C}arnot
              groups},
   JOURNAL = {Comm. Partial Differential Equations},
  FJOURNAL = {Communications in Partial Differential Equations},
    VOLUME = {30},
      YEAR = {2005},
    NUMBER = {10-12},
     PAGES = {1817--1841},
      ISSN = {0360-5302,1532-4133},
   MRCLASS = {35B27 (22E25 35A30 35H10 49N60)},
  MRNUMBER = {2182313},
MRREVIEWER = {Taras\ A.\ Mel\cprime nyk},
       DOI = {10.1080/03605300500300014},
       URL = {https://doi.org/10.1080/03605300500300014},
}

@article {ddmm20,
    AUTHOR = {Dirr, Nicolas and Dragoni, Federica and Mannucci, Paola and
              Marchi, Claudio},
     TITLE = {{$\Gamma$}-convergence and homogenisation for a class of
              degenerate functionals},
   JOURNAL = {Nonlinear Anal.},
  FJOURNAL = {Nonlinear Analysis. Theory, Methods \& Applications. An
              International Multidisciplinary Journal},
    VOLUME = {190},
      YEAR = {2020},
     PAGES = {111618, 25},
      ISSN = {0362-546X,1873-5215},
   MRCLASS = {49J45},
  MRNUMBER = {4021977},
MRREVIEWER = {Micol\ Amar},
       DOI = {10.1016/j.na.2019.111618},
       URL = {https://doi.org/10.1016/j.na.2019.111618},
}

@article {ft02,
    AUTHOR = {Franchi, Bruno and Tesi, Maria Carla},
     TITLE = {Two-scale homogenization in the {H}eisenberg group},
   JOURNAL = {J. Math. Pures Appl. (9)},
  FJOURNAL = {Journal de Math\'ematiques Pures et Appliqu\'ees. Neuvi\`eme
              S\'erie},
    VOLUME = {81},
      YEAR = {2002},
    NUMBER = {6},
     PAGES = {495--532},
      ISSN = {0021-7824},
   MRCLASS = {35B27 (35J70 43A80)},
  MRNUMBER = {1912410},
MRREVIEWER = {Alain\ Brillard},
       DOI = {10.1016/S0021-7824(01)01247-8},
       URL = {https://doi.org/10.1016/S0021-7824(01)01247-8},
}

@article {bw03,
    AUTHOR = {Birindelli, I. and Wigniolle, J.},
     TITLE = {Homogenization of {H}amilton-{J}acobi equations in the
              {H}eisenberg group},
   JOURNAL = {Commun. Pure Appl. Anal.},
  FJOURNAL = {Communications on Pure and Applied Analysis},
    VOLUME = {2},
      YEAR = {2003},
    NUMBER = {4},
     PAGES = {461--479},
      ISSN = {1534-0392,1553-5258},
   MRCLASS = {35B27 (35H20)},
  MRNUMBER = {2019062},
MRREVIEWER = {A.\ K.\ Nandakumaran},
       DOI = {10.3934/cpaa.2003.2.461},
       URL = {https://doi.org/10.3934/cpaa.2003.2.461},
}

\end{document}